\documentclass[twoside,11pt]{article}

\PassOptionsToPackage{table,dvipsnames}{xcolor}

\usepackage[preprint]{jmlr2e}
\usepackage[utf8]{inputenc}
\usepackage[T1]{fontenc}
\usepackage{url}
\usepackage{booktabs}
\usepackage{amsfonts}
\usepackage{amsmath, bbm}
\usepackage{amssymb}
\usepackage{nicefrac}
\usepackage{microtype}
\usepackage{xcolor}
\usepackage{graphicx, subcaption}
\graphicspath{{./}}
\usepackage{adjustbox}
\usepackage{algorithm}
\usepackage{algorithmic}

\hypersetup{colorlinks=true, allcolors=blue}
\usepackage[nameinlink,noabbrev,capitalise]{cleveref}

\crefname{thm}{theorem}{theorems}
\Crefname{thm}{Theorem}{Theorems}
\crefname{prop}{proposition}{propositions}
\Crefname{prop}{Proposition}{Propositions}
\crefname{lemma}{lemma}{lemmas}
\Crefname{lemma}{Lemma}{Lemmas}
\crefname{rmk}{remark}{remarks}
\Crefname{rmk}{Remark}{Remarks}

\DeclareMathOperator{\E}{\mathbb{E}}

\def\d{\text{d}}

\begin{document}

\title{Optimizing Irreversible Perturbations of the Unadjusted Langevin Algorithm}

\author{\name Qianyu Julie Zhu \\
       \addr Massachusetts Institute of Technology \\
       \addr Cambridge, MA, USA \\
       \addr qianyu\_z@mit.edu \\
       \AND
       \name Youssef Marzouk \\
       \addr Massachusetts Institute of Technology\\
       \addr Cambridge, MA, USA \\
       \addr ymarz@mit.edu \\
       \AND
       \name Konstantinos Spiliopoulos\\
       \addr Boston University\\
       \addr Boston, MA, USA \\
       \addr kspiliop@bu.edu \\
       \AND
       \name Benjamin Zhang\\
       \addr University of North Carolina at Chapel Hill \\
       \addr Chapel Hill, NC, USA \\
       \addr bjz@unc.edu \\
}

\maketitle

\begin{abstract}
Irreversible perturbations accelerate the convergence of Langevin dynamics, breaking detailed balance while preserving the invariant measure.
The design of optimal irreversible perturbations has been studied in the continuous-time Gaussian setting, but extensions to non-Gaussian target distributions---and the impact of \emph{time discretization} on the design of optimal perturbations---have not been well understood.
Numerical discretizations of Langevin dynamics introduce bias, which is typically exacerbated by irreversible perturbations; handling this interaction demands a joint treatment of acceleration and accuracy.
This paper develops a systematic framework for optimizing position-independent irreversible perturbations of the unadjusted Langevin algorithm (ULA).
We formulate a constrained optimization problem that simultaneously accounts for mixing efficiency and discretization bias, where the former is characterized by a spectral gap analogue and the latter is quantified via a weighted expected squared jump distance.
Within this framework, we derive an explicit characterization of the optimal position-independent irreversible perturbation.
Extensive numerical experiments demonstrate that our design yields faster convergence with controlled bias, and improves mean squared estimation errors compared to other choices of irreversible perturbation.
\end{abstract}

\begin{keywords}
Unadjusted Langevin algorithm, irreversible perturbations, expected squared jump distance, Markov chain Monte Carlo
\end{keywords}

\section{Introduction}

In modern Bayesian inference and scientific computing, a central challenge is to compute expectations under a target distribution $\pi$ known only up to normalization \citep{robert1999monte, box2011bayesian, james1980monte, durmus2019high}.
Markov chain Monte Carlo (MCMC) methods address this problem by constructing a Markov process with stationary distribution $\pi$, enabling estimation via ergodic averages without explicit integration (e.g., quadrature)
in high dimensions \citep{andrieu2003introduction, geyer1992practical, geyer2011introduction, brooks2011handbook, gilks1995markov}.
This paper focuses on the \textit{unadjusted Langevin algorithm} (ULA), a gradient-based MCMC method derived from discretizing overdamped Langevin dynamics. ULA is widely used for its simplicity and scalability, and its mixing rate and discretization bias have been studied extensively \citep{rossky1978brownian, roberts1998optimal, bussi2007accurate, cheng2018underdamped}.

Despite its simplicity, ULA faces two key limitations: slow mixing on challenging targets and discretization bias that grows with step size \citep{roberts1996exponential, roberts1998optimal}.
To address these issues, two complementary strategies have been developed to accelerate convergence by enlarging spectral gaps and reducing asymptotic variance. \textit{Reversible perturbations} reshape the dynamics using local or global geometric information \citep{gilks1996strategies, mira2001ordering, girolami2011riemann, leimkuhler2018ensemble, parno2018transport, dalalyan2017theoretical, gabrie2022adaptive, hird2023quantifying, Ti2023, cui2024optimal}, while \textit{irreversible perturbations} introduce additional drift to enhance exploration of the state space \citep{hwang1993accelerating, diaconis2000analysis, mira2000non, neal2004improving, hwang2005accelerating, sun2010improving, bierkens2016non, zhang2022geometryinformed,ReyBelletSpiliopoulos_2015a,ReyBelletSpiliopoulos_2015b,ReyBelletSpiliopoulos_2016}.
Both strategies are well studied in continuous time: reversible perturbations, also understood as preconditioning, are supported by rigorous optimality principles, and irreversible perturbations have been shown to accelerate a wide range of diffusion-based samplers.
For irreversible perturbations, however, most analyses remain qualitative and/or confined to the continuous-time limit. \citet{Lelivre2013} provide a principled characterization of \emph{optimal} perturbations in the continuous-time Gaussian setting. \citet{zhang2022geometryinformed} propose a way of constructing geometry-informed irreversible perturbations for more general non-Gaussian targets, but do not invoke a particular notion of optimality.
It is also important to note that irreversible perturbations can introduce stiff, multiscale dynamics, which in turn degrade the accuracy of standard time discretization schemes, as observed in numerical studies of irreversible Langevin samplers \citep{lu2018analysis}. A principled framework for designing irreversible perturbations that simultaneously accounts for discretization error and extends beyond Gaussian target distributions $\pi$ remains largely unrealized.

This paper develops a principled framework for designing \emph{state-independent} irreversible perturbations of ULA, explicitly balancing two competing goals: accelerating mixing and controlling discretization bias.
Our framework first imposes a constraint that enforces a mixing-improvement criterion, motivated by spectral-gap optimality in the Gaussian setting but extended to the non-Gaussian case via the \textit{Fisher information matrix}. Then we optimize within this admissible class of perturbations, minimizing a discretization error proxy based on a \textit{weighted expected squared jump distance} (ESJD$^W$). The solution set
of this optimization problem can be characterized explicitly, and yields an irreversible perturbation that achieves the best trade-off between these two objectives.

We demonstrate our method on several representative target distributions: anisotropic Gaussians, Gaussian mixtures, and the posterior distributions of Bayesian logistic regression and Bayesian independent component analysis problems. In these experiments, ULA equipped with our optimal irreversible perturbation achieves a favorable trade-off between mixing efficiency (estimator variance) and estimator bias, and lower mean-squared estimation errors overall. Additionally, it remains stable over a broad range of step sizes, in regimes where alternative constructions of irreversible perturbations can diverge or stagnate.

The rest of the paper is organized as follows:
\Cref{sec:background} reviews irreversible perturbations of the overdamped Langevin dynamics.
\Cref{sec:method} presents our method for constructing optimal irreversible perturbations, informed by the Fisher information of the target distribution, together with a practical approximation algorithm based on online Fisher information matrix estimation.
\Cref{sec:numerical} reports numerical experiments comparing our method to alternative designs of irreversible perturbation.
\Cref{sec:conclusion} provides a discussion of future research directions.

The appendix contains derivations of formulas, proofs of the theoretical results of the paper, and additional numerical examples. In particular, Appendix \ref{app:reversible} discusses reversible perturbations in the context of choosing a preconditioner that minimizes the expected square jump distance (ESJD), in the spirit of \citet{Ti2023}, which has partially motivated the approach in this paper. Appendix \ref{app:derivations} contains explicit derivations of various formulas used throughout the paper, including the mean and covariance of Gaussian ULA iterates and the discretization error originating from the It\^o--Taylor expansion. Appendix \ref{app:proofs} contains rigorous and complete proofs of the theoretical results of the paper. Appendix \ref{app:fisher-matrix-visual} discusses aspects of Fisher information-based preconditioning. Appendix \ref{app:add-experiment-details} contains additional experiments and numerical details. Appendix \ref{app:robustness} offers an empirical and target-dependent comparison of irreversible and reversible perturbations.
Code for our algorithms and experiments is available at \href{https://github.com/qianyu-zhu/IrrevULA}{https://github.com/qianyu-zhu/IrrevULA}.

\section*{Acknowledgments}\label{sec:acknowledgments}

JZ and YM acknowledge support from the US Department of Energy, Office of Advanced Scientific Computing Research, under grants DE-SC0023188 and DE-SC0026245. KS was partially supported by the National Science Foundation (NSF) under grant DMS-2311500. BZ was supported by AFOSR grant FA9550-21-1-0354.

\section{Background on sampling from equilibrium}\label{sec:background}

Let $\pi\propto e^{-V}$ denote a target density on $\Omega \subset \mathbb{R}^d$.
The overdamped Langevin diffusion targeting $\pi$ is
\begin{equation}\label{eq:overdamped_LD}
    \mathrm{d} X_t = -\nabla V(X_t) \, \mathrm{d}t + \sqrt{2} \, \mathrm{d}W_t,
\end{equation}
where $W_t$ is a standard Brownian motion and the initial condition of the process is $X_0$.
Let $\pi_t$ denote the law of $X_t$. (When $\pi_t$ has a density with respect to the Lebesgue measure on $\mathbb{R}^d$, we use the same notation for both measure and density.)
Under mild regularity conditions on the potential $V$ (e.g., it is smooth and confining), the process admits a unique invariant measure $\pi$ and $\pi_t$ converges to $\pi$ weakly \citep[Proposition 4.2]{pavliotis2014stochastic}.
The ergodic theorem then ensures
\begin{equation*}
    \E_\pi[f(X)] = \int_{\mathbb R^d}f(x)\pi(x)\d x = \lim_{T\rightarrow \infty}\frac{1}{T}\int_0^T f(X(t))\d t.
\end{equation*}
For a detailed treatment, see \citet[Section 2.4]{pavliotis2014stochastic}.

As discussed in the introduction, we focus on irreversible perturbations, which introduce an additional drift term $\phi: \mathbb{R}^d \to \mathbb{R}^d$:
\begin{equation}\label{eq:irr-LD}
    \mathrm{d}X_t \;=\; \big[\,\nabla\log\pi(X_t)\;+\;\phi(X_t)\,\big]\mathrm{d}t \;+\; \sqrt{2}\,\mathrm{d}W_t.
\end{equation}
The process leaves $\pi$ invariant whenever $\phi$ is $\pi$–divergence free, i.e.\ $\nabla\cdot\big(\phi(x)\,\pi(x)\big)=0$.
A natural choice satisfying this constraint is $\phi(x)=J\,\nabla\log\pi(x)$ with a constant skew-symmetric matrix $J^\top=-J$, since
\begin{equation*}
    \nabla\!\cdot\!\big(\phi\,\pi\big)=\nabla\!\cdot\!\big(J\nabla\pi\big)=\mathrm{tr}\big(J\,\nabla^2\pi\big)=0.
\end{equation*}
We call this the \textit{state-independent} case; see Remark~\ref{rmk:state-dependent-perturbation} for the general case.

Such perturbations break detailed balance by inducing a rotational vector field which enhances state-space exploration \citep{bierkens2016non, diaconis2000analysis}.
Prior work has shown that irreversible drift accelerates convergence to equilibrium, with optimal rates and corresponding optimality criteria characterized for Gaussian targets \citep{hwang1993accelerating, hwang2005accelerating}. Building on this line of work, \citet{Lelivre2013} provide an explicit construction of optimal irreversible
perturbations achieving these rates.
Behavior of the generator under strongly scaled nonreversible drift has also been analyzed \citep{berestycki2005elliptic, constantin2008diffusion, franke2010behavior}, and subsequent work established that irreversible perturbations for overdamped Langevin dynamics reduce the asymptotic variance of the ergodic estimators \citep{ReyBelletSpiliopoulos_2015a,ReyBelletSpiliopoulos_2015b, ReyBelletSpiliopoulos_2016, duncan2016variance}.

In this work, we focus on state-independent case and identify a class of perturbations $J$ that jointly reduces mixing time and discretization error. For brevity, we refer to $J$ itself as the \textit{irreversible perturbation}.
\begin{rmk}\label{rmk:state-dependent-perturbation}
    More generally, irreversible perturbations take the form
\begin{equation*}
    \phi(x) = C(x) \, \nabla \log \pi(x) + \nabla \cdot C(x),
\end{equation*}
where the matrix field $C(x) = -C(x)^\top$ may depend on $x$.
This form encompasses the state-independent case but also enables geometry-aware designs that adapt to the local structure \citep{zhang2022geometryinformed, wu2023accelerating}.
A notable instance, proposed by \citet{zhang2022geometryinformed}, sets \(C(x)=\tfrac12\!\left(JA(x)+A(x)J\right)\) with \(J^\top=-J\), aligning the irreversible drift with the metric induced by a symmetric positive definite preconditioner \(A(x)\). We leave the study of such \textit{state-dependent} extensions to future work.
\end{rmk}

\subsection{Optimal irreversible perturbation for Gaussian targets in continuous time}
For Gaussian targets, the impact of irreversible perturbations can be quantified exactly through the spectral gap, enabling explicit characterization of optimal perturbations.
Consider $\pi \propto \exp(-x^\top S x / 2)$ with irreversible perturbations $\phi(x) = JSx$, $J = -J^\top$ \citep{hwang1993accelerating, Lelivre2013}. The infinitesimal generator of the corresponding Langevin dynamics is
\begin{equation*}
    \mathcal{L} = -(I + J) S x \cdot \nabla + \Delta \, .
\end{equation*}
The spectrum of $\mathcal{L}$ in $L^p(\pi)$, $p>1$, consists of negative integer linear combinations of the eigenvalues of $(I + J) S$ \citep{metafune2002spectrum}:
\begin{equation*}
    \sigma(\mathcal{L}) = \left\{ -\sum_{j=1}^r n_j \sigma_j \; : \; \forall\, n_j \in \mathbb{N}_0 \right\},
\end{equation*}
where $\{\sigma_j\}_{j=1}^r$ are the (generally complex) eigenvalues of $(I + J) S$, all satisfying $\Re(\sigma_j)>0$ (where $\Re$ denotes the real part).
The spectral gap is thus determined by the smallest among $\{\Re(\sigma_j)\}_{j=1}^r$.
Since $J$ is skew-symmetric, $\mathrm{Tr}((I+J)S) = \mathrm{Tr}(S)$, so the sum of the real parts of the eigenvalues $\{\sigma_j\}_{j=1}^r$ is fixed.
Thus, maximizing the spectral gap corresponds to maximizing the smallest real part under a fixed sum constraint, which is achieved when all $\Re(\sigma_j)$ are equal
\citep{Lelivre2013}:
\begin{equation}\label{eq:J_spec}
    \mathcal J_{\mathrm{s}}(S)
    :=
    \left\{J\in\mathbb R^{d\times d}: J^\top=-J,\;
    \min_i \Re \big( \sigma_i( (I + J) S ) \big)
    = \frac{\mathrm{Tr}(S)}{d}\right\}.
\end{equation}
where the subscript ``s'' denotes \emph{spectral}. When $S$ is clear, we write $\mathcal J_{\mathrm{s}}$.
The maximizer of the spectral gap in continuous time is generally not unique; \citet{Lelivre2013} further proposed a constructive algorithm that yields admissible maximizers within this class.
However, this analysis is confined to continuous time and does not account for discretization error, a critical consideration in practice.
We address this gap in \Cref{sec:method}.

\section{Design of optimal position-independent irreversible perturbations}
\label{sec:method}

In practice, the continuous-time Langevin dynamics are approximated by a discrete-time scheme, typically Euler--Maruyama \citep[Section~3]{higham2001algorithmic}:
\begin{equation}\label{eq:ULA}
    X_{t} = X_{t-1} + h(I+J)\nabla\log\pi(X_{t-1})+ \sqrt{2h} \, Z_{t-1},\qquad t = 1,2,\dots
\end{equation}
where $h>0$ is the step size and $\{Z_t\}_{t\geq 0}$ are i.i.d.\ standard Gaussian.
This first-order discretization, also referred to as the unadjusted Langevin algorithm (ULA), admits an invariant distribution $\pi_h$ that generally differs from $\pi$.
While a Metropolis–Hastings correction can recover $\pi$, the asymmetry induced by irreversible perturbations lowers acceptance rates and adds computational cost, especially in high dimensions \citep{xifara2014langevin, OttobrePillaiSpiliopoulos2020}. We therefore focus on unadjusted dynamics and design $J$ to balance mixing acceleration against discretization bias.

Our approach is one of constrained optimization. Among choices of $J$ that satisfy the Gaussian spectral optimality defined by \cite{Lelivre2013}, we will minimize a discretization-error proxy based on the weighted expected squared jump distance. We begin with the Gaussian setting, where spectral gap can be calculated exactly, before extending these principles to general targets.

For a Gaussian target with mean $\mu$ and covariance matrix $S^{-1}$, the ULA update in \eqref{eq:ULA} yields
\begin{equation}\label{eq:ULA-gaussian}
    X_{t} = X_{t-1} - h (I+J)S(X_{t-1}-\mu) + \sqrt{2h}\,Z_{t-1}.
\end{equation}
With $X_0 \sim \mathcal{N}(\mu_0, \tau^2 I)$, the iterates remain Gaussian with mean and covariance
\begin{align*}
    \mu_t(h) = \mu+B^t(\mu_0-\mu),\ \
    \Sigma_t(h) = \tau^2B^t(B^\top)^t + 2h\sum_{i=0}^{t-1}B^i(B^\top)^i, \ \  B:=I-h(I+J)S.
\end{align*}
See \Cref{app:gaussian-example} for a detailed derivation. As $t\rightarrow\infty$, $\pi_t$ converges to the Gaussian distribution $\pi_h$, whose  moments as functions of $h$ are
\begin{align}
\label{eq:momentswithbias}
    \mu(h) = \mu,\qquad
    \Sigma(h) = S^{-1}+h\Sigma^{(1)}+ \mathcal O(h^2),
\end{align}
where $\Sigma^{(1)}$ satisfies
\begin{align}
    A\Sigma^{(1)}+\Sigma^{(1)}A^\top=(I+J) S (I+J)^\top, \ \text{with} \ A := (I+J)S.
    \end{align}
While the mean is unbiased, the covariance incurs bias driven by $(I+J)S(I+J)^\top$, reflecting the discrepancy between the invariant distribution $\pi_h$ of the discretized dynamics and the target distribution $\pi$.

Moment estimation error provides useful intuition for how discretization affects downstream statistics, but it is not a sufficiently general design objective.
Directly optimizing the bias of a particular moment estimator is both observable-dependent and less principled as a general design criterion.
Since our goal is to choose perturbations that improve the overall performance of the discretized sampler, we instead introduce tractable proxies for discretization error.
These proxies capture the leading-order dependence on $J$ while remaining general enough to support analysis and optimization.

\subsection{Discretization error proxies}\label{sec:disc_err_criteria_ESJD}

We introduce three proxies for the discretization error, evaluated in closed form for the ULA~\eqref{eq:ULA-gaussian} with $X_t\sim\pi=\mathcal N(\mu, S^{-1})$.

\paragraph{(i) ESJD.}
The expected squared jump distance, used as a discretization error criterion in \citet{Ti2023}, measures the average squared displacement per iteration:
\begin{equation}\label{eq:ESJD-def}
    \mathrm{ESJD}(X_t, h)\;:=\;\mathbb{E}\big[\|X_{t+h}-X_t\|^2\big].
\end{equation}
Substituting the Gaussian ULA step~\eqref{eq:ULA-gaussian} at stationarity,
\begin{align*}
    \mathrm{ESJD}(X_t,h)
&= \mathrm{Tr}\!\left(h^2 A\, S^{-1}A^\top\right)+2h d
= -h^2\mathrm{Tr}\big[ J S J \big] +g_{1}(S,h),\qquad A:=(I+J)S,
\end{align*}
where $g_{1}(S,h):=h^2\mathrm{Tr}[S]+2h d$ collects $J$-independent terms and we used $\text{Tr}[JS+SJ^\top]=0$ by skew-symmetry.
Minimizing ESJD over $J$ thus reduces to minimizing $\mathcal E_1(J) :=-\mathrm{Tr}\big[ J S J \big]$.

This proxy has a direct connection to the moment estimation error in \eqref{eq:momentswithbias}: the $J$-dependent part of ESJD exactly equals the $J$-dependent part of the forcing term driving the leading covariance bias. In particular, $\mathrm{Tr}\bigl( (I+J) S (I+J)^\top \bigr ) = \mathrm{Tr}(S) + \mathrm{Tr}(JSJ^\top)$. Larger irreversible perturbations produce larger jumps and, simultaneously, larger bias.

\paragraph{(ii) ESJD$^W$.}
For anisotropic targets, a large step along a high-curvature direction inflates bias more than the same step along a flat direction.
A natural remedy is to measure jumps in coordinates that \emph{isotropize} the target.
This generally motivates a weighted ESJD with weight $W \succ 0$:
\begin{equation}\label{eq:w-ESJD-def}
    \mathrm{ESJD}^W(X_t, h)
    \;:=\;\mathbb{E}\big[\|X_{t+1}-X_t\|_{W}^2\big].
 \end{equation}
where we use the notation $\|a \|_W^2 := a^\top W a$.
At stationarity,
\begin{align*}
   \mathrm{ESJD}^W(X_t,h)
   &= h^2\mathrm{Tr}\!\left(W(I+J)S(I+J)^\top\right)+2h\mathrm{Tr}[W].
\end{align*}
Taking $W=S$ makes the cross terms vanish. In particular, we get that
\begin{align*}
\mathrm{ESJD}^S(X_t,h)
   &= - h^2\mathrm{Tr}(JSJS)+g_{2}(S,h),
\end{align*}
with $g_{2}(S,h)=-h^2\mathrm{Tr}(SS)+2h\mathrm{Tr}[S]$. This calculation
gives the proxy $\mathcal E_2(J) :=  -\mathrm{Tr}(JSJS)$. Compared with $\mathcal E_1$, this proxy favors larger effective steps along directions where the target geometry is flatter.

\paragraph{(iii) It\^o--Taylor remainder.}
A complementary viewpoint comes from the numerical analysis of SDEs: the local truncation error of the Euler--Maruyama scheme directly quantifies the per-step approximation quality.
The It\^o--Taylor expansion at $X_t$ (the stochastic analogue of a Taylor series) gives the leading error term as
\begin{equation*}
    X_{t+h} - \tilde{X}_{t+h} = -\sqrt{2}(I+J)S\,Z_{t,h}+ \mathcal{O}(h^2),
\end{equation*}
where $\tilde{X}_{t+h}$ denotes the EM approximation of $X_{t+h}$ and $Z_{t,h}\sim\mathcal{N}(0,\tfrac{h^3}{3}I)$.
Its expected squared norm $\mathbb{E}\big[\|X_{t+h}-\tilde{X}_{t+h}\|^2\big]$ is proportional to $\mathrm{Tr}[(I+J)SS(I+J)^\top]$, yielding $\mathcal E_3(J) := -\mathrm{Tr}(J S S J)$ after isolating $J$-dependent terms; see \Cref{app:ito-taylor} for the full derivation.

\begin{figure}[htbp]
    \centering
        \centering
        \includegraphics[width=0.7\textwidth]{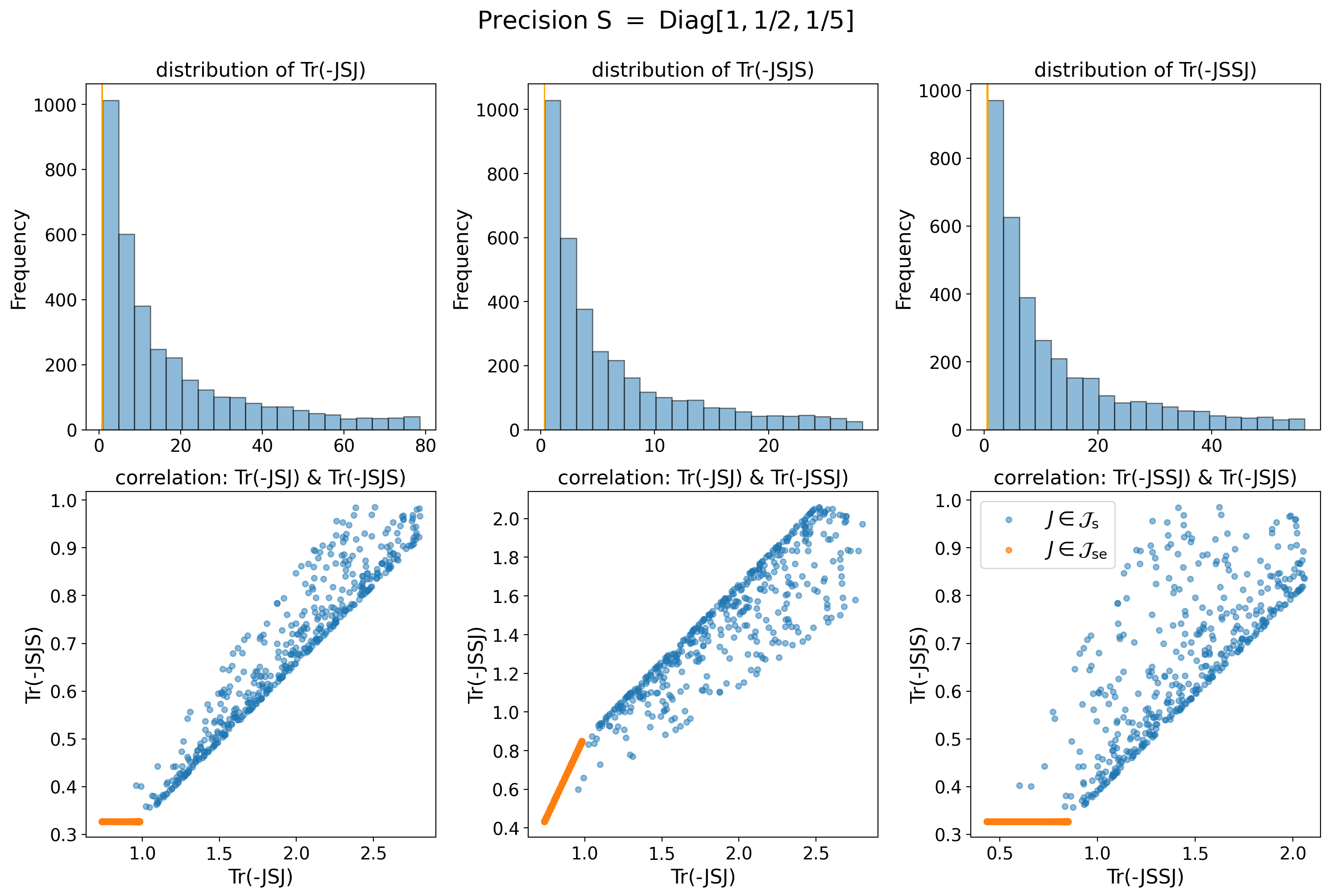}
    \caption[Relationship between different discretization criteria]{Marginal distributions (top) and pairwise correlations (bottom) of $\text{Tr}(-JSJ)$, $\text{Tr}(-JSJS)$, and $\text{Tr}(-JSSJ)$ for spectrally optimal perturbations $J\in\mathcal J_{\mathrm{s}}$ (blue). The orange points correspond to perturbations $J\in\mathcal J_{\mathrm{se}}$ that additionally minimize $\mathcal E_2$; they are discussed in \Cref{sec:optimal-J-constraint}.}
    \label{fig:relation_different_target}
\end{figure}

To summarize, we proposed the following three discretization error proxies:
$$\mathcal E_1(J)  = -\mathrm{Tr}(J S J), \qquad \mathcal E_2(J) =-\mathrm{Tr}(J S J S),\qquad \mathcal E_3(J)  = -\mathrm{Tr}(J S S J).$$
The next section compares these functions and examines how well they track the mean-square estimation error that ultimately determines sampling performance.

\subsection{Comparison of proxies and relationship with MSE}
\label{sec:proxy_comparison}

As a numerical illustration, we fix a Gaussian target with precision $S=\operatorname{diag}(1,1/2,1/5)$, and randomly sample $J\in\mathcal J_\mathrm{s}$ using the constructive algorithm of \citet{Lelivre2013}. \Cref{fig:relation_different_target} and \Cref{fig:relation_different_target_mse} reveal the following key findings.

\begin{itemize}

\item The histograms in the top row of \Cref{fig:relation_different_target} show that none of $\mathcal E_{\{1,2,3\}}(J)$ is bounded over $\mathcal J_\mathrm{s}$. Thus, spectral optimality alone does not control discretization error: arbitrary choices of spectrally optimal perturbation can yield arbitrarily large discretization error. This motivates a second-stage optimization of $\mathcal E_i(J)$ within $\mathcal J_\mathrm{s}$.

\item The pairwise scatter plots in the bottom row of \Cref{fig:relation_different_target} show strong positive correlation among all three criteria. This suggests that optimizing a single representative proxy is sufficient in practice.
  Proposition~\ref{prop:proxy-equivalence} below formalizes this equivalence.

\item \Cref{fig:relation_different_target_mse}  connects the proxies to estimator accuracy. In this example, the proxies show little correlation with the MSE of the estimator of the first moment $\mathbb E[\mathrm{sum}(X^{(i)})]$, but strong positive correlation with the MSE of the estimator of the second moment $\mathbb E[\|X\|_2^2]$. This matches the discretization-bias analysis in the opening of \Cref{sec:method}: mean estimation is asymptotically unbiased even after discretization, whereas covariance-related quantities are affected by discretization.
\end{itemize}

The next proposition establishes equivalence of $\mathcal{E}_{ \{1, 2, 3\} }$  up to constants depending only on the spectrum of $S$. See \Cref{app:proof:prop:proxy-equivalence} for the proof.
\begin{prop}\label{prop:proxy-equivalence}
Let $S = S^\top\succ 0$ and let $J\in\mathbb R^{d\times d}$ be skew-symmetric. Let $\sigma_{\min}(S)$ and $\sigma_{\max}(S)$ denote the smallest and largest eigenvalues of $S$. Then $\mathcal E_i(J)\ge 0$ for $i=1,2,3$, and
\[
\sigma_{\min}(S)\,\mathcal E_1(J)
\ \le\
\mathcal E_2(J)
\ \le\
\sigma_{\max}(S)\,\mathcal E_1(J),
\qquad
\sigma_{\min}(S)\,\mathcal E_1(J)
\ \le\
\mathcal E_3(J)
\ \le\
\sigma_{\max}(S)\,\mathcal E_1(J).
\]
Consequently, $\mathcal E_2(J)$ and $\mathcal E_3(J)$ are equivalent up to the condition number of $S$:
\[
\frac{\sigma_{\min}(S)}{\sigma_{\max}(S)}\,\mathcal E_2(J)
\ \le\
\mathcal E_3(J)
\ \le\
\frac{\sigma_{\max}(S)}{\sigma_{\min}(S)}\,\mathcal E_2(J).
\]
\end{prop}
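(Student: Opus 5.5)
The plan is to rewrite each proxy as a trace of a product of positive semidefinite matrices and then use the Loewner-order bounds $\sigma_{\min}(S)I\preceq S\preceq\sigma_{\max}(S)I$.

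\textbf{Step 1 (rewrite the proxies).} Since $J^\top=-J$, we have $M:=JJ^\top=J^\top J=-J^2\succeq 0$. Using cyclicity of the trace together with $-J=J^\top$:
\begin{itemize}
\item $\mathcal E_1(J)=-\mathrm{Tr}(JSJ)=\mathrm{Tr}(S(-J^2))=\mathrm{Tr}(SM)$;
\item $\mathcal E_3(J)=-\mathrm{Tr}(JSSJ)=\mathrm{Tr}(S^2M)$;
\item $\mathcal E_2(J)=-\mathrm{Tr}(JSJS)=\mathrm{Tr}(J^\top SJS)=\mathrm{Tr}\bigl(S^{1/2}J^\top SJ\,S^{1/2}\bigr)$.
\end{itemize}
Each of these is the trace of a product of positive semidefinite matrices, or of a congruence of a positive semidefinite matrix. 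This immediately gives $\mathcal E_i(J)\ge 0$ for $i=1,2,3$.

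\textbf{Step 2 ($\mathcal E_3$ versus $\mathcal E_1$).} Diagonalize $S=U\Lambda U^\top$ and set $\tilde M=U^\top MU\succeq 0$, whose diagonal entries satisfy $\tilde M_{ii}\ge 0$. Then
\[
\mathcal E_1(J)=\sum_i \lambda_i\tilde M_{ii},\qquad \mathcal E_3(J)=\sum_i\lambda_i^2\tilde M_{ii}.
\]
Bounding $\lambda_i^2$ termwise between $\sigma_{\min}(S)\lambda_i$ and $\sigma_{\max}(S)\lambda_i$, and using $\lambda_i\tilde M_{ii}\ge 0$, gives the second pair of inequalities in the statement.

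\textbf{Step 3 ($\mathcal E_2$ versus $\mathcal E_1$).} From $\sigma_{\min}(S)I\preceq S\preceq\sigma_{\max}(S)I$ and congruence by $J$, we get
\[
\sigma_{\min}(S)\,J^\top J\preceq J^\top SJ\preceq\sigma_{\max}(S)\,J^\top J.
\]
Congruence by $S^{1/2}$ preserves this Loewner ordering, and the trace is monotone with respect to it. Hence
\[
\sigma_{\min}(S)\,\mathrm{Tr}(SJ^\top J)\le\mathcal E_2(J)\le\sigma_{\max}(S)\,\mathrm{Tr}(SJ^\top J).
\]
Since $J^\top J=M$, the bounding quantity is $\mathrm{Tr}(SM)=\mathcal E_1(J)$, which gives the first pair of inequalities.

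\textbf{Step 4 (the consequence).} Chain the two results. The upper bound comes from $\mathcal E_3\le\sigma_{\max}\mathcal E_1$ combined with $\mathcal E_1\le\mathcal E_2/\sigma_{\min}$. The lower bound comes from $\mathcal E_3\ge\sigma_{\min}\mathcal E_1$ combined with $\mathcal E_1\ge\mathcal E_2/\sigma_{\max}$. Dividing by $\sigma_{\min}(S)$ is valid because $S\succ 0$.

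The only delicate point is Step 3. Here $\mathcal E_2$ is not literally of the form $\mathrm{Tr}(S^kM)$, so one must notice that it is the symmetric form $\mathrm{Tr}(S^{1/2}J^\top SJS^{1/2})$. One must also use skew-symmetry to identify $J^\top J$ with $JJ^\top$. Everything else is routine.
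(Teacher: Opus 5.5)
Your proposal is correct and follows essentially the same route as the paper. Both rewrite each proxy as a trace involving a positive semidefinite matrix, apply the sandwich $\sigma_{\min}(S)I\preceq S\preceq\sigma_{\max}(S)I$ with trace monotonicity, and chain the resulting bounds; the only differences are cosmetic, since the paper works with $M=JS^{1/2}$ and the pair $M^\top M$, $MM^\top$ (using $\mathrm{Tr}(MM^\top)=\mathrm{Tr}(M^\top M)=\mathcal E_1$), whereas you use $JJ^\top=J^\top J$, an eigenbasis computation for $\mathcal E_3$, and a congruence by $J$ for $\mathcal E_2$.
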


Together, these findings justify optimizing a single discretization-error proxy within $\mathcal J_\mathrm{s}$. We will see in \Cref{sec:optimal-J-constraint} that only  $\mathcal E_2$ admits a tractable closed-form optimization under the spectral constraint.

\begin{figure}[htbp]
        \centering
        \includegraphics[width=0.8\textwidth]{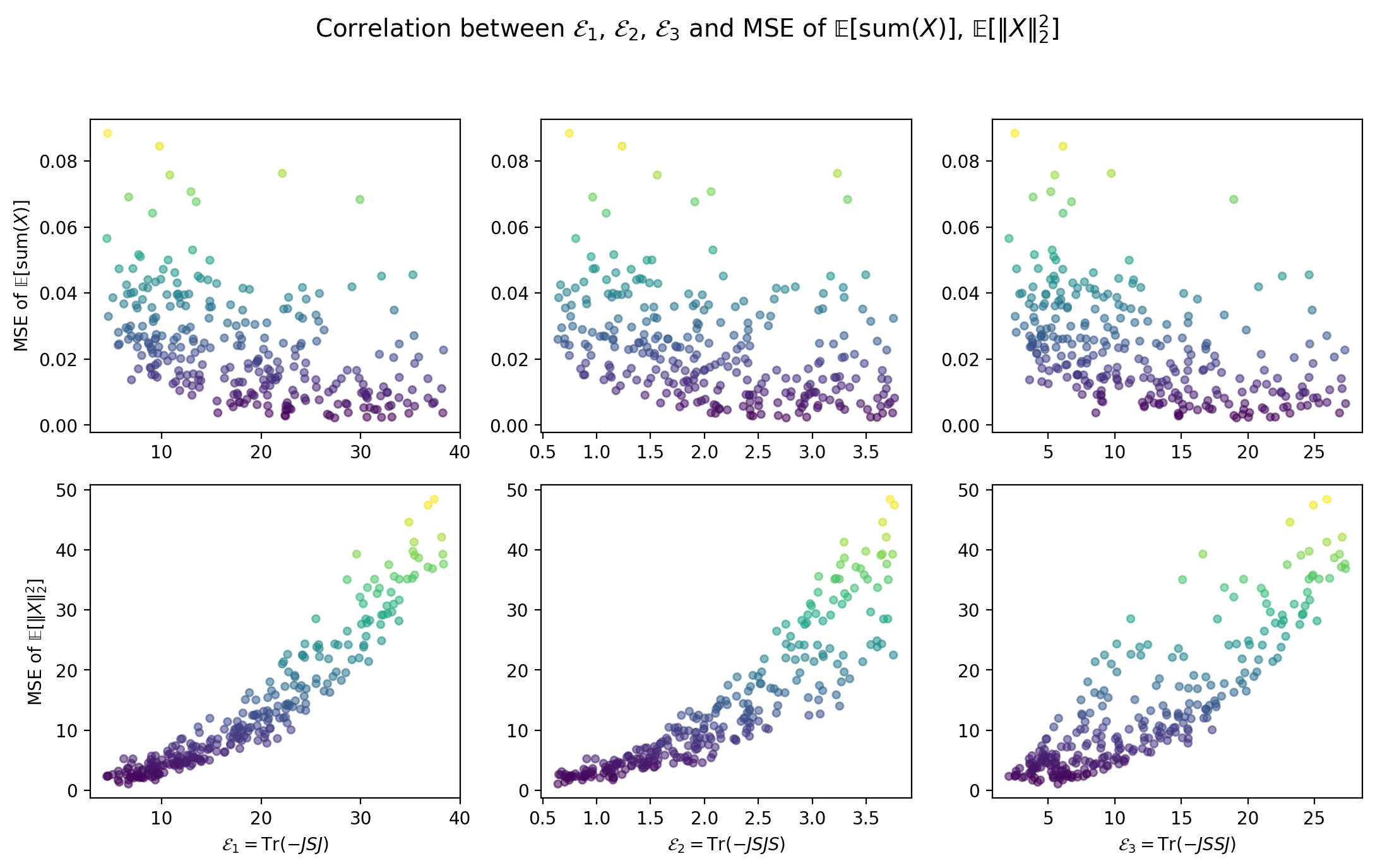}
    \caption[Correlation between statistics MSE and discretizations]{Correlation between the MSE of ULA estimators of $\mathbb E[\text{sum}(X)]$ and $\mathbb E[\|X\|^2_2]$, and three discretization error proxies, $\mathcal E_1(J)$, $\mathcal E_2(J)$, $\mathcal E_3(J)$.  $J$ are sampled from $\mathcal J_\mathrm{s}$,
     and the target distribution is Gaussian with precision $S=\operatorname{diag}(1,1/2,1/5)$. All three proxies are positively correlated with the MSE of the $J$-dependent estimators of $\mathbb E[\|X\|^2_2]$, but not with the MSE of the estimators of $\mathbb E[\text{sum}(X)]$. The color represents MSE.}
    \label{fig:relation_different_target_mse}
\end{figure}

\subsection{Illustration: two-dimensional Gaussian}\label{sec:2D:Gauss}
Before presenting our general method for selecting $J\in\mathcal J_\mathrm{s}$ while controlling discretization error,
we first illustrate the key trade-off in a two-dimensional setting with a Gaussian target, where every skew-symmetric $J\in\mathbb R^{2\times 2}$ is of the form
\[
    J = \begin{pmatrix} 0 & a \\ -a & 0 \end{pmatrix} \qquad\text{ with }\quad a \in \mathbb{R}.
\]
For the precision matrix $S = \text{diag}(1,\varepsilon)$, the spectral optimality condition in \citet{Lelivre2013} is attained by every $a$ satisfying $|a|\geq (1-\varepsilon)/2\sqrt{\varepsilon}$.
Thus, continuous-time spectral optimality alone does not identify a unique perturbation.
Once discretization error is taken into account, however, the ambiguity is removed: minimizing $\mathcal E_2$ over this spectrally optimal class selects the two boundary values $a^* =\pm (1-\varepsilon)/2\sqrt{\varepsilon}$.

\Cref{fig:ESJD} visualizes the first part of this trade-off. The blue shaded region denotes the spectrally optimal class: once \(|a|\) exceeds the threshold, the spectral gap has reached its maximal value, so all larger perturbations are equivalent from the continuous-time spectral perspective.
The discretization proxy, however, continues to increase with \(|a|\).
Thus, minimizing \(\mathrm{ESJD}^{S}\), equivalently \(\mathcal E_2\), selects the smallest perturbation that is still spectrally optimal, namely the bifurcation point where the spectral constraint first becomes active.
In this two-dimensional Gaussian example, this same boundary value also minimizes \(\mathcal E_1 \) and \(\mathcal E_3\), although this coincidence is special to this setting.

\Cref{fig:J_scale_comparison} then shows why this distinction matters for finite-step estimation.
For linear statistics, such as \(\mathbb E[\operatorname{sum}(X)]\), larger \(|a|\) primarily improves mixing and therefore reduces variance and hence MSE.
For nonlinear statistics, however, the behavior is no longer monotone: increasing \(|a|\) reduces variance at first, but excessive perturbation strength introduces discretization bias.
Thus, perturbations that are equivalent in continuous time can lead to different finite-sample accuracy after discretization.

This example identifies the basic selection problem.
Spectral optimality determines a class of accelerating perturbations $\mathcal{J}_s$, but discretization error determines which member of that class is preferable in practice.
In two dimensions this reduces to choosing the boundary point \(a^\ast\); in higher dimensions, the spectrally optimal set has many more degrees of freedom because the space of skew-symmetric matrices has dimension \(d(d-1)/2\).
This motivates the general optimization framework developed next.
\begin{figure}[htbp]
\centering
\includegraphics[width=0.95\linewidth]{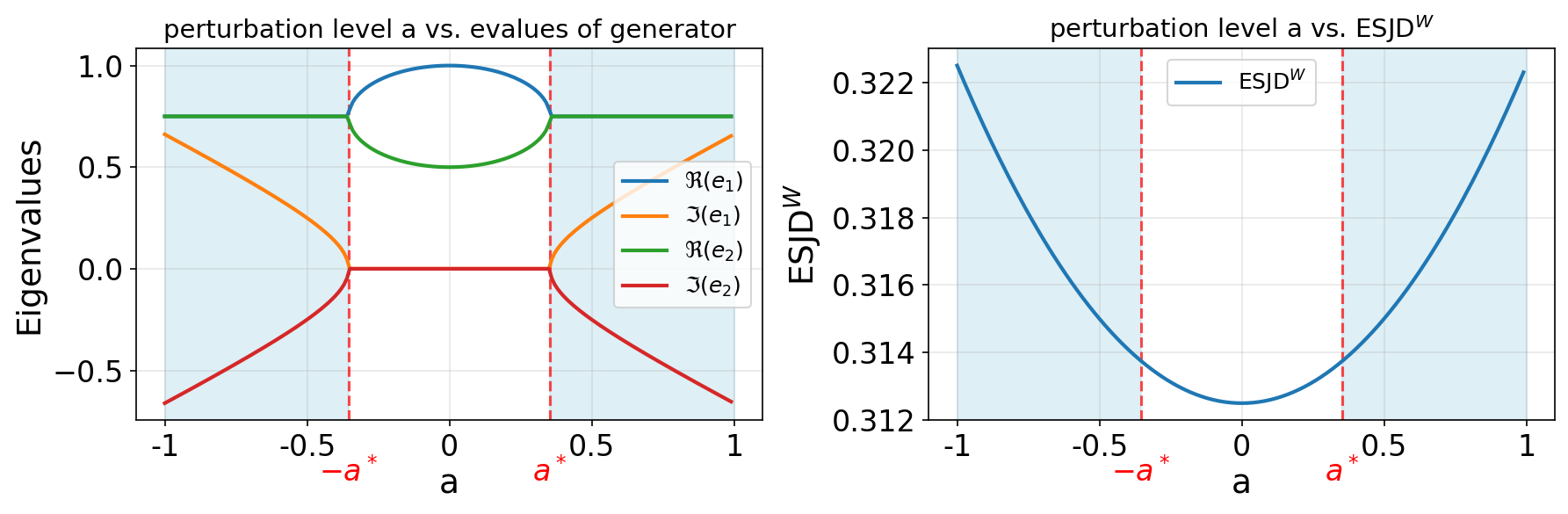}
\caption[2-D ESJD$^W$ vs. spectral gap]{In 2-D, the spectral gap increases with perturbation strength until it reaches its maximal value at \(a^\ast\), while ESJD$^W$ (with $W=S$) continues to increase beyond this point. Thus \(a^\ast\), up to sign, is the discretization-optimal choice within the spectrally optimal class.}
\label{fig:ESJD}
\end{figure}

\begin{figure}[htbp]
    \centering
    \includegraphics[width=0.95\linewidth]{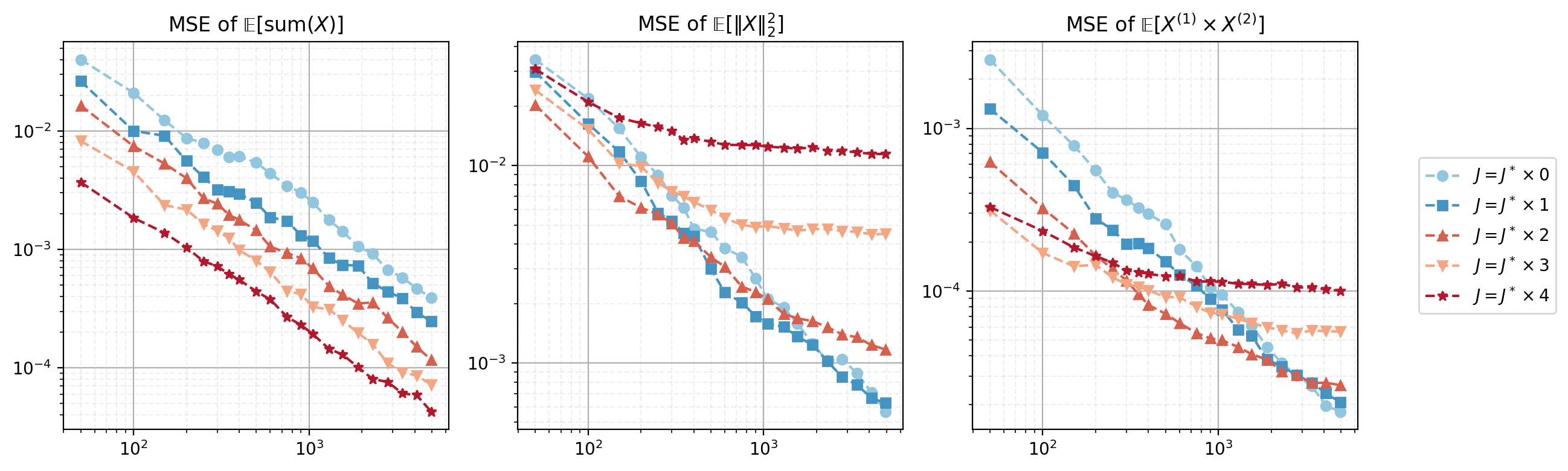}
    \caption[2D Gaussian, MSE vs. Time]{MSE of three statistics for different perturbation scales, in 2-D. The curve with square markers corresponds to the optimal $J^* = [0 \ \ a^\ast; -a^\ast \ \ 0]$. The other curves correspond to $J$ with different scaling. The errors corresponding to larger $J$ are small when the total number of steps is small, but they plateau at higher values earlier due to numerical bias. The bias-variance tradeoff is less present for symmetric observables such as $\mathbb E[\text{sum}(X)]$.}
    \label{fig:J_scale_comparison}
\end{figure}

\subsection{Optimal irreversible perturbations as solution to constrained optimization}
\label{sec:optimal-J-constraint}

We now formalize the design of optimal irreversible perturbations as a constrained optimization problem that minimizes discretization error while maximizing spectral gap.
Although all three proxies correlate positively with MSE and are equivalent up to spectral constants, as shown in the preceding subsections, only $\mathcal E_2$ leads to an explicit characterization of the optimal solutions.

We write \(\gamma_\star=\mathrm{Tr}(S)/d\) and minimize $\mathcal E_2(J)$ subject to the spectral gap constraint of \citet{Lelivre2013}:
\begin{equation}\label{eq:P1}
\begin{aligned}
    \min_{J:J = -J^\top} \quad &\mathcal E_2(J) := \mathrm{Tr}(-JSJS) \\
    \text{s.t.}  \qquad &\Re\!\left[\sigma_i\big((I+J)S\big)\right] = \gamma_\star, \ \  \forall i.
\end{aligned}
\tag{P1}
\end{equation}
The constraint enforces spectral optimality: all eigenvalues of $(I+J)S$ share the same real part, maximizing the spectral gap for the continuous-time process. The objective penalizes large jumps in high-curvature directions, controlling discretization bias. Together, they identify perturbations that accelerate mixing without sacrificing numerical accuracy.

\paragraph{Equivalent spectral coordinate formulation.}
Now define
\[
    \tilde J=S^{1/2}JS^{1/2},
    \qquad
    \tilde B_J=S+\tilde J
    =S^{1/2}(I+J)S\,S^{-1/2}.
\]
Then \(\tilde B_J\) is similar to \((I+J)S\), and
\[
    \mathrm{Tr}(-JSJS)=\|\tilde J\|_F^2.
\]
For any orthonormal matrix \(\Psi=[\psi_1,\ldots,\psi_d]\), write
\[
    B_\Psi=\Psi^\top S\Psi,
    \qquad
    M=\Psi^\top \tilde J\Psi .
\]
Orthogonal invariance gives \(\|\tilde J\|_F=\|M\|_F\), and
\(\Psi^\top \tilde B_J\Psi=B_\Psi+M\). Hence \eqref{eq:P1} is equivalent to the
following problem in different coordinates:
\begin{equation}\label{eq:P2}
\begin{aligned}
    \min_{\Psi,M}\qquad &\|M\|_F^2\\
    \text{s.t.}\qquad
    &\Psi^\top\Psi=I,\quad M^\top=-M,\\
    &\Re\!\left[\sigma_i(B_\Psi+M)\right]=\gamma_\star,\quad i=1,\ldots,d .
\end{aligned}
\tag{P2}
\end{equation}
Given a feasible pair \((\Psi,M)\), the corresponding perturbation is
\[
    J=S^{-1/2}\Psi M\Psi^\top S^{-1/2}.
\]
Conversely, every feasible \(J\) in \eqref{eq:P1} gives feasible coordinates
\((\Psi,M)\) in \eqref{eq:P2} by setting \(M=\Psi^\top S^{1/2}JS^{1/2}\Psi\).

\begin{rmk}
The formulation \eqref{eq:P2} is only a change of coordinates from \eqref{eq:P1}, so the two
problems have the same feasible set and the same optima.
\end{rmk}

\paragraph{Optimal value and solution set.}
The form of \eqref{eq:P2} makes the attained optimum explicit.
\begin{thm}
\label{thm:P1-P2-relation}
Let $S = S^\top \succ0$ and define $\gamma_\star=\mathrm{Tr}(S)/d$. There exists an orthonormal basis
\(\Psi=[\psi_1,\ldots,\psi_d]\) satisfying
\[
    \psi_k^\top S\psi_k=\gamma_\star,
    \qquad k=1,\ldots,d .
\]
For any such basis, define \(B=\Psi^\top S\Psi\) and let \(M_\star\) be the
skew-symmetric matrix
\[
    (M_\star)_{jk}
    =
    \begin{cases}
        B_{jk}, & j<k,\\
        0, & j=k,\\
        -B_{jk}, & j>k.
    \end{cases}
\]
Equivalently, $B+M_\star$ is upper triangular with diagonal entries $\gamma_\star$.
Let
\[
    \tilde J_\star=\Psi M_\star\Psi^\top,
    \qquad
    J_\star=S^{-1/2}\tilde J_\star S^{-1/2}.
\]
Then \((\Psi,M_\star)\) is feasible for \eqref{eq:P2}, i.e., \(J_\star\in\mathcal J_\mathrm{s}\), and
\[
    \min(\mathrm{P2})
    =
    \min(\mathrm{P1})
    =
    \mathrm{Tr}(-J_\star S J_\star S)
    =
    \|S\|_F^2-d\gamma_\star^2
    =
    \|S\|_F^2-\frac{[\mathrm{Tr}(S)]^2}{d}.
\]
Moreover, the global minimizers of \eqref{eq:P1} are exactly the matrices
\[
    \left\{
    S^{-1/2}\Psi M_\star(\Psi)\Psi^\top S^{-1/2}
    :
    \Psi^\top\Psi=I,\
    \operatorname{diag}(\Psi^\top S\Psi)=\gamma_\star\mathbf 1
    \right\}.
\]
\end{thm}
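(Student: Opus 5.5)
The plan has three parts: construct the basis, show that $(\Psi,M_\star)$ is feasible and compute its objective value, and then prove a matching lower bound over all feasible $J$ whose equality case pins down the minimizers. The lower bound comes from a Schur-decomposition identity for $\|\tilde J\|_F^2$.

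\emph{Existence of $\Psi$.} I would argue by induction on $d$. Let $v_{\min},v_{\max}$ be unit eigenvectors of $S$ for $\sigma_{\min}(S)$ and $\sigma_{\max}(S)$. Then $\sigma_{\min}(S)\le\gamma_\star\le\sigma_{\max}(S)$. The map $v\mapsto v^\top Sv$ is continuous on the unit sphere, which is connected for $d\ge2$; the path $\cos\theta\, v_{\min}+\sin\theta\, v_{\max}$ also works directly. So some unit $\psi_1$ satisfies $\psi_1^\top S\psi_1=\gamma_\star$. The compression of $S$ to $\psi_1^\perp$ is symmetric positive definite with trace $\mathrm{Tr}(S)-\gamma_\star=(d-1)\gamma_\star$. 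Its average diagonal value is therefore again $\gamma_\star$, and induction finishes the construction. (Alternatively, one can cite Schur--Horn.)

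\emph{Feasibility and value.} Fix such a $\Psi$ and set $B=\Psi^\top S\Psi$. By construction $B+M_\star$ is upper triangular with diagonal $\operatorname{diag}(B)=\gamma_\star\mathbf 1$, so all its eigenvalues equal $\gamma_\star$. Since $B+M_\star=\Psi^\top\tilde B_{J_\star}\Psi$ is similar to $(I+J_\star)S$, we get $J_\star\in\mathcal J_\mathrm{s}$. Moreover
\[
\|M_\star\|_F^2=2\sum_{j<k}B_{jk}^2=\|B\|_F^2-\sum_k B_{kk}^2=\|S\|_F^2-d\gamma_\star^2 .
\]

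\emph{Lower bound (the key step).} Take any skew $J$ and set $A=S^{1/2}(I+J)S^{1/2}=\tilde S+\tilde J$, where $\tilde S:=S^2$ in these coordinates. More simply, $A=S+\tilde J$ is a matrix whose symmetric part is $S$ and whose skew part is $\tilde J$. Take a complex Schur decomposition $A=UTU^*$ with $T$ upper triangular and $\operatorname{diag}(T)=(\lambda_i)$. Then $U^*SU=(T+T^*)/2$ and $U^*\tilde JU=(T-T^*)/2$. Computing Frobenius norms entrywise gives
\[
\|S\|_F^2=\sum_i(\Re\lambda_i)^2+\tfrac12\sum_{i<j}|T_{ij}|^2,\qquad
\|\tilde J\|_F^2=\sum_i(\Im\lambda_i)^2+\tfrac12\sum_{i<j}|T_{ij}|^2 .
\]
Subtracting yields the identity
\[
\|\tilde J\|_F^2=\|S\|_F^2-\sum_i(\Re\lambda_i)^2+\sum_i(\Im\lambda_i)^2 .
\]
Under the constraint of \eqref{eq:P1} every $\Re\lambda_i$ equals $\gamma_\star$, so $\mathcal E_2(J)=\|\tilde J\|_F^2\ge\|S\|_F^2-d\gamma_\star^2$. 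Equality holds iff all $\lambda_i$ are real, i.e.\ all equal $\gamma_\star$. This proves the stated minimum value for \eqref{eq:P1}, and for \eqref{eq:P2} by the coordinate equivalence.

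\emph{Characterization of minimizers.} Let $J$ be a minimizer. Then $A$ is real with the single real eigenvalue $\gamma_\star$, so a \emph{real} Schur decomposition $A=\Psi T\Psi^\top$ exists with $\Psi$ orthogonal and $T$ real upper triangular with diagonal $\gamma_\star$. The diagonal of $\Psi^\top\tilde J\Psi$ vanishes by skew-symmetry, so $\operatorname{diag}(\Psi^\top S\Psi)=\operatorname{diag}(T)=\gamma_\star\mathbf 1$. Moreover, upper-triangularity of $B+M$ forces $M_{jk}=-B_{jk}$ for $j>k$, and skew-symmetry then forces $M=M_\star(\Psi)$. Hence $J=S^{-1/2}\Psi M_\star(\Psi)\Psi^\top S^{-1/2}$. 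The converse inclusion is the feasibility-and-value computation above.

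I expect the main obstacle to be spotting the Schur identity separating the real and imaginary parts of the spectrum. The one delicate point after that is remembering to use the real Schur form in the equality case, so that $\Psi$ is real.
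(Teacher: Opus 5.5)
Your proof is correct and is essentially the paper's: the same lower bound, with equality iff every eigenvalue of $S+\tilde J$ equals $\gamma_\star$, and the same real-Schur argument for the minimizers. The only differences are cosmetic: you construct $\Psi$ by induction instead of citing Schur--Horn, and you get the identity $\|\tilde J\|_F^2=\|S\|_F^2-\sum_i(\Re\lambda_i)^2+\sum_i(\Im\lambda_i)^2$ from a complex Schur form, whereas the paper obtains it directly by writing $\mathrm{Tr}(A^2)$ both as $\sum_i\lambda_i^2$ and as $\|S\|_F^2-\|\tilde J\|_F^2$. One slip to delete: $S^{1/2}(I+J)S^{1/2}=S+\tilde J$, not $S^2+\tilde J$, so the aside defining $\tilde S:=S^2$ is wrong, although you go on to use the correct $A=S+\tilde J$.
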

The proof is in \Cref{proof:thm:P1-P2-relation}.

We define the solution set to \eqref{eq:P1} as
\[
    \mathcal J_\mathrm{se}
    :=
    \operatorname*{argmin}_{J\in\mathcal J_\mathrm{s}}\;\mathrm{Tr}(-JSJS).
\]
\Cref{algo:algorithm} provides a constructive procedure to obtain an element of $\mathcal J_{\mathrm{se}}$.
It is inspired by the constructive approach of
\citet{Lelivre2013}, whose goal is to build an orthonormal basis
$\Psi=(\psi_1, \dots, \psi_d)$ satisfying $\psi_k^\top S\psi_k = \gamma_\star$.
This condition ensures that the resulting perturbation can satisfy the
optimal spectral constraint. However, this construction is aimed at spectral optimality alone, whereas our goal is to select, among spectrally optimal perturbations,
one that also optimizes the ESJD$^W$ discretization-error proxy. Our algorithm
thus modifies the previous constructive procedure in two respects.

First, instead of repeatedly selecting directions with maximal and minimal values of $(\psi_i,S\psi_i)$, we use a threshold-based selection rule.
This avoids discontinuous changes in the selected directions when $S$ is slightly perturbed, which is useful in adaptive settings where the precision matrix is estimated and updated online; see Section~\ref{sec:adaptive_FIM}.

Second, after constructing such a basis $\Psi$, spectral optimality still leaves degrees of freedom in the skew-symmetric
perturbation. These degrees of freedom do not affect the spectral
constraint, but they do affect the discretization error proxy.
We resolve this ambiguity by choosing the triangular perturbation in \Cref{thm:P1-P2-relation}, namely
\[
    M_{jk}=\mathrm{sign}(k-j)\psi_j^\top S\psi_k,\quad j\neq k,
    \qquad M_{kk}=0.
\]
This choice preserves spectral optimality while attaining the minimum of $\mathcal{E}_2(J)$.

\begin{algorithm}
\caption{Construct $J \in \mathcal{J}_{\mathrm{se}}$ satisfying spectral optimality and minimizing ESJD$^W$.\label{algo:algorithm}}
\begin{algorithmic}
    \STATE \textbf{Input:} Matrix $\mathcal F \succ 0$ (for Gaussian target $\mathcal F =S$)
    \STATE Set $\gamma_{\mathcal F}=\mathrm{Tr}(\mathcal F)/d$.
    \STATE \textbf{Initialize:} Orthonormal basis $\Psi:=(\psi_1, \dots, \psi_d)$.
    \FOR{$n = 1$ to $d-1$}
        \IF{there exists $r\geq n$ such that $\psi_r^\top\mathcal F\psi_r=\gamma_{\mathcal F}$}
            \STATE Swap $\psi_n \leftrightarrow \psi_r$.
        \ELSE
            \STATE Find indices $p, q \geq n$ such that:
            \[
                ( \psi_p, \mathcal F \psi_p ) > \gamma_{\mathcal F},
                \quad
                ( \psi_q, \mathcal F \psi_q )< \gamma_{\mathcal F} .
            \]
            \STATE Compute $\theta^\star \in (0, \pi/2)$ such that $\psi_{\theta^\star} = \cos(\theta^\star)\psi_p + \sin(\theta^\star)\psi_q$ satisfies:
            \[
                ( \psi_{\theta^\star}, \mathcal F \psi_{\theta^\star} ) = \gamma_{\mathcal F}
            \]
            \STATE Set $\psi_p \gets \psi_{\theta^\star}$ and swap $\psi_n \leftrightarrow \psi_p$.
            \STATE Apply Gram--Schmidt to $(\psi_{n+1}, \dots, \psi_d)$ to restore orthonormality.
        \ENDIF
    \ENDFOR
    \STATE Let $\Psi = (\psi_1, \dots, \psi_d)$ be the resulting orthonormal basis.
    \STATE Construct skew-symmetric $M \in \mathbb{R}^{d \times d}$:
    \[
        M_{jk} = \mathrm{sign}(k - j) \cdot \psi_j^\top \mathcal F\psi_k, \quad \forall j \neq k, \qquad M_{kk} = 0.
    \]
    \STATE \textbf{Return:} $J = \mathcal F^{-1/2} \Psi M \Psi^\top \mathcal F^{-1/2}$.
\end{algorithmic}
\end{algorithm}

Returning to \Cref{fig:relation_different_target}, the orange points show perturbations $J\in\mathcal J_{\mathrm{se}}$ generated by \Cref{algo:algorithm}. These perturbations, which minimize $\mathcal E_2$ by construction, simultaneously achieve low values of $\mathcal E_1$ and $\mathcal E_3$, confirming the equivalence of \Cref{prop:proxy-equivalence}: for well-conditioned $S$, optimizing any single proxy effectively controls the others.

\subsection{Extension to non-Gaussian targets}
The constrained optimization problem~\eqref{eq:P1} is formulated for Gaussian
targets, where the precision matrix $S$ fully determines the geometry of the
distribution. For a general target $\pi$, we can still ask what the best
position-independent irreversible perturbation should be. A natural way to
characterize the target distribution for this purpose is through its \textit{Fisher information matrix}:
\begin{equation}
    \mathcal{F} \;=\; \mathbb{E}_{\pi}\!\left[\nabla\log\pi(X)\,\nabla\log\pi(X)^\top\right].\label{Eq:FisherGeneral}
\end{equation}
This choice is principled in two ways: first,  $\mathcal{F}$ captures the average local curvature of $\log\pi$, and for a Gaussian with precision $S$ it recovers $\mathcal{F}=S$ exactly.
Second, the weighted ESJD for general $\pi$ is a function of $\mathcal{F}$:
\begin{align*}
    \mathrm{ESJD}^W(X_t,h)
    :&= \mathrm{Tr} \, \mathbb{E}\big[ W^{1/2}(X_{t+h} - X_t)(X_{t+h} - X_t)^\top W^{1/2}\big] \\
    &=h^2\mathrm{Tr}\!\left(W(I+J)\mathcal F(I+J)^\top\right)+2h\mathrm{Tr}[W].
\end{align*}
Setting $W = \mathcal F$ makes the cross terms vanish. Writing
\(\gamma_{\mathcal F}=\mathrm{Tr}(\mathcal F)/d\), minimizing this $\mathcal{F}$-weighted ESJD over $J \in \mathcal{J}_s(\mathcal{F})$ yields
the following optimization problem:
\begin{equation}\label{eq:P3}
\begin{aligned}
    &\min_{J = -J^\top} \qquad-\mathrm{Tr}\ \left( J \mathcal F J \mathcal F\right)\\
    &\text{s.t.}\qquad \qquad
    \Re[\sigma_i\left((I+J)\mathcal{F}\right)] = \gamma_{\mathcal F},\qquad\forall i.
\end{aligned}
\tag{P3}
\end{equation}
This is precisely the Gaussian problem \eqref{eq:P1} with $S$ replaced by $\mathcal F$,
so the same closed-form solution and \Cref{algo:algorithm} apply directly. We note that the Fisher information matrix~\eqref{Eq:FisherGeneral} is also used by
\citet{Ti2023} to construct optimal position-independent \emph{reversible}
perturbations; here, we use the same target summary for the irreversible setting.

\subsubsection{When does the Fisher-based approach work?}
Since $\mathcal{F}$ averages local geometry over the entire state space, it is most informative when this geometry is sufficiently consistent.
We now analyze two regimes where design based on the global Fisher information matrix $\mathcal{F}$ is most effective and describe how this matrix can be estimated adaptively in practice.

\paragraph{Near-Gaussian targets.}
We first consider target distributions $\pi \propto \exp(-V)$ that are close to Gaussian, in the sense that $V$ is a small perturbation of a quadratic form. Write $\mathcal F_\pi$ for the Fisher information matrix \eqref{Eq:FisherGeneral}  of such a \(\pi\). The natural question is whether an irreversible perturbation induced by $J^* \in \mathcal J_\mathrm{s}(\mathcal F_\pi)$, i.e., designed from $\mathcal{F}_\pi$, will still accelerate the convergence of Langevin dynamics for $\pi$. The next result shows that, under a diagonalizability assumption, this spectral acceleration is indeed stable to perturbations.

\begin{thm}[Spectral gap stability]\label{thm:spectral-gap-stability}
Let $\pi\propto \exp(-V)$ and let $V$ take the form
\begin{equation*}
    V(x) = \frac{1}{2}x^\top Sx + r(x),
\end{equation*}
with $S = S^\top \succ0$, $r \in C^2(\mathbb R^d)$, \(\nabla r(0)=0\), and
\[
    \delta := \sup_{x \in \mathbb R^d} \|\nabla^2 r(x)\|_{2} < \infty,
\]
where (without loss of generality) we have centered the coordinates $x$. Then
\[
    \mathcal F_\pi=S+\mathbb E_\pi[\nabla^2 r(X)],
    \ \ \text{and} \
    \|\mathcal F_\pi-S\|_2\leq \delta .
\]
Now let \(J^* \in \mathcal J_\mathrm{s}(\mathcal F_\pi)\), the set of skew-symmetric matrices defined in~\eqref{eq:J_spec} with \(S\) replaced by \(\mathcal F_\pi\). Assume that the drift matrix \((I+J^*)\mathcal F_\pi\) is diagonalizable over \(\mathbb C\).
Assume also
$2\delta < c_0 \sigma_{\min}(\mathcal F_\pi)$ for a constant $c_0 = c_0(\mathcal F_\pi,J^*) \in (0,1)$.
Then the spectral gap of the nonreversible generator $\mathcal{L}_{J^*} f
    =
    \Delta f-(I+J^*)\nabla V\cdot\nabla f$, denoted by $\gamma(J^*)$, satisfies
\begin{equation*}
    \gamma(J^*) \geq \frac{\mathrm{Tr}(\mathcal F_\pi)}{d} - 2C\delta,\qquad \text{or equivalently,}\qquad  \gamma(J^*) \geq \frac{\mathrm{Tr}(S)}{d} - (2C+1)\delta ,
\end{equation*}
where $C = C(\mathcal F_\pi,J^*,d) > 0$.

\end{thm}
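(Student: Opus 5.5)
The plan has two parts: an elementary Fisher-identity estimate for $\mathcal F_\pi$, and a synchronous-coupling contraction argument. The contraction is measured in a norm adapted to the eigenbasis of the linear drift $(I+J^*)\mathcal F_\pi$, and it is then transferred to the spectrum of $\mathcal L_{J^*}$.

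\emph{Step 1 (Fisher matrix).} Integrate by parts against $\pi\propto e^{-V}$. Boundary terms vanish because $V$ grows quadratically: $\nabla^2 V\succeq S-\delta I$, and the hypothesis $2\delta<c_0\sigma_{\min}(\mathcal F_\pi)$ will also force $S-\delta I\succ 0$. This gives the standard identity $\mathbb E_\pi[\nabla V\nabla V^\top]=\mathbb E_\pi[\nabla^2 V]$. Since $\nabla^2V=S+\nabla^2 r$, we get $\mathcal F_\pi=S+\mathbb E_\pi[\nabla^2 r(X)]$. Then Jensen gives $\|\mathcal F_\pi-S\|_2\le \mathbb E_\pi\|\nabla^2 r\|_2\le\delta$. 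As a by-product, $|\mathrm{Tr}(\mathcal F_\pi)-\mathrm{Tr}(S)|\le d\delta$. This only yields $\mathrm{Tr}(\mathcal F_\pi)/d\ge \mathrm{Tr}(S)/d-\delta$, which is exactly what converts the first bound of the theorem into the second.

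\emph{Step 2 (contraction in an adapted norm).} Let $A=(I+J^*)\mathcal F_\pi=PDP^{-1}$ with $D$ diagonal. Since $J^*\in\mathcal J_{\mathrm s}(\mathcal F_\pi)$, every entry of $D$ has real part $\gamma_{\mathcal F}:=\mathrm{Tr}(\mathcal F_\pi)/d$. Couple two copies $X_t,Y_t$ of the perturbed diffusion with the same Brownian motion, and set $e_t=X_t-Y_t$. Then
\[
\dot e_t=-(I+J^*)H_t e_t,\qquad H_t=\int_0^1\nabla^2V(Y_t+s e_t)\,\d s=S+R_t,\quad \|R_t\|_2\le\delta .
\]
Write $(I+J^*)H_t=A+E_t$ with $E_t=(I+J^*)(S-\mathcal F_\pi+R_t)$, so that $\|E_t\|_2\le 2\|I+J^*\|_2\,\delta$ by Step 1. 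In the norm $\|x\|_P:=\|P^{-1}x\|_2$,
\[
\tfrac{\d}{\d t}\|e_t\|_P^2=-2\Re\langle P^{-1}e_t,(D+P^{-1}E_tP)P^{-1}e_t\rangle\le-2\bigl(\gamma_{\mathcal F}-2\kappa(P)\|I+J^*\|_2\,\delta\bigr)\|e_t\|_P^2 .
\]
This suggests taking $C=\kappa(P)\|I+J^*\|_2$ and $c_0$ with $c_0\sigma_{\min}(\mathcal F_\pi)\le\gamma_{\mathcal F}/C$, e.g.\ $c_0=\min\{1,(C\,\sigma_{\min}(\mathcal F_\pi))^{-1}\gamma_{\mathcal F}\}/2$, so the rate $\lambda:=\gamma_{\mathcal F}-2C\delta$ is positive. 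The eigenvector-condition factor $\kappa(P)$ is what makes $C$ depend on $(\mathcal F_\pi,J^*)$, and this is where diagonalizability is used.

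\emph{Step 3 (from contraction to the spectral gap) — the main obstacle.} Step 2 gives $\mathrm{Lip}_P(P_tf)\le e^{-\lambda t}\mathrm{Lip}_P(f)$ for the semigroup, and hence $W_2$-contraction at rate $\lambda$ for the nonreversible dynamics. I would convert this into a bound on $\gamma(J^*)$ by looking at eigenfunctions. If $\mathcal L_{J^*}f=-\mu f$ in $L^2(\pi)$ with $f$ nonconstant, then $P_tf=e^{-\mu t}f$. Comparing Lipschitz seminorms gives $e^{-\Re\mu\, t}\le e^{-\lambda t}$ for all $t$, so $\Re\mu\ge\lambda$. To make this rigorous I must know that $\mathcal L_{J^*}$ has discrete spectrum with smooth, locally Lipschitz eigenfunctions. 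That holds by hypoellipticity and a compact resolvent, using that $V$ is uniformly convex with bounded Hessian. Alternatively, I would bypass eigenfunctions and argue by decay: for Lipschitz $f$, $\|P_tf-\pi(f)\|_{L^2(\pi)}\lesssim e^{-\lambda t}$, and Lipschitz functions are dense and $P_t$ is a contraction in $L^2(\pi)$ (since $\pi$ is invariant). Then the growth bound of $P_t$ on $L^2_0(\pi)$ is at most $-\lambda$, which controls the spectral gap. Either route gives $\gamma(J^*)\ge\mathrm{Tr}(\mathcal F_\pi)/d-2C\delta$. Combining this with the trace bound from Step 1 gives the second form, $\gamma(J^*)\ge\mathrm{Tr}(S)/d-(2C+1)\delta$.
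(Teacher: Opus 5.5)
Your Steps 1 and 2 are correct, and they take a genuinely different route from the paper. In the norm $\|x\|_P=\|P^{-1}x\|_2$ the drift matrix $(I+J^*)\mathcal F_\pi$ has logarithmic norm exactly $\gamma_{\mathcal F}$. Hence the synchronous coupling contracts pathwise at rate $\lambda=\gamma_{\mathcal F}-2C\delta$, with the explicit constant $C=\kappa(P)\|I+J^*\|_2$. The gap is in Step 3: as written, neither of your two routes transfers this contraction to the $L^2(\pi)$ spectrum.

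\emph{Eigenfunction route.} Comparing Lipschitz seminorms requires $0<\mathrm{Lip}_P(f)<\infty$, that is, a \emph{global} Lipschitz bound on the eigenfunction. Local Lipschitz regularity only yields $\infty\le\infty$. Global bounds fail in general: already for the Gaussian reference operator, the level-$n$ eigenfunctions are polynomials of degree $n$. For the perturbed operator you have no a priori gradient control on any eigenfunction, and you need it for every eigenvalue you want to exclude.

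\emph{Decay route.} You argue that $\|P_tf-\pi(f)\|_{L^2(\pi)}\le C_fe^{-\lambda t}$ on a dense set, together with $\|P_t\|\le 1$, gives a growth bound $\le-\lambda$ on $L^2_0(\pi)$. This does not follow. The constant $C_f\propto\mathrm{Lip}_P(f)$ is not controlled by $\|f\|_{L^2(\pi)}$, so density and contractivity only give $P_tf\to\pi(f)$ for each $f$, with no uniform rate.

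The step can be repaired by duality, using the compact resolvent you invoke. Compactness holds because $\Re\langle(z-\mathcal L_{J^*})f,f\rangle_\pi=z\|f\|^2_{L^2(\pi)}+\|\nabla f\|^2_{L^2(\pi)}$ for $z>0$. So the resolvent maps into $H^1(\pi)$, which embeds compactly into $L^2(\pi)$ for strongly convex $V$.

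Suppose $-\mu\neq0$ is an eigenvalue of $\mathcal L_{J^*}$. Then $-\bar\mu$ is an eigenvalue of the adjoint $\mathcal L_{J^*}^*=\mathcal L_{-J^*}$ with some eigenfunction $g\neq 0$. Thus $P_t^*g=e^{-\bar\mu t}g$, and $P_t1=1$ forces $\langle 1,g\rangle_\pi=0$. For every Lipschitz $f$,
\[
e^{-\Re\mu\,t}\,|\langle f,g\rangle_\pi|=|\langle P_tf-\pi(f),g\rangle_\pi|\le e^{-\lambda t}\,\mathrm{Lip}_P(f)\,m_P\,\|g\|_{L^2(\pi)},\qquad m_P^2:=\iint\|x-y\|_P^2\,\pi(\mathrm{d}x)\,\pi(\mathrm{d}y).
\]
If $\Re\mu<\lambda$, letting $t\to\infty$ gives $g\perp f$ for all Lipschitz $f$, hence $g=0$, a contradiction.

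Alternatively, you can get a genuine $L^2_0(\pi)$ growth bound. Use the first-variation form of your Step 2, $\|\,|\nabla P_tf|_*\|_{L^2(\pi)}\le e^{-\lambda t}\|\,|\nabla f|_*\|_{L^2(\pi)}$, where $|\cdot|_*$ is the dual norm. Combine it with the energy identity $\tfrac{\mathrm{d}}{\mathrm{d}t}\|P_tf\|^2_{L^2(\pi)}=-2\|\nabla P_tf\|^2_{L^2(\pi)}$ and the Poincar\'e inequality for $\pi$.

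\emph{Comparison with the paper.} Once repaired, your argument proves the theorem by a different mechanism. The paper stays at the operator level. It uses the ground-state transform, relative boundedness of the non-Gaussian part $\mathcal W$ with respect to the Gaussian reference operator, a Riesz projection isolating $0$, and Neumann-series resolvent bounds on the strip $\epsilon\le\Re z\le\gamma-\epsilon$. Diagonalizability enters there as semi-simplicity of the first-level poles. That approach reads off the spectrum directly, with no dynamics-to-spectrum transfer. However, its constant is implicit (built from resolvent and eigenprojection norms), and it needs $\delta$ below several thresholds.

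Your route has different advantages:
\begin{enumerate}
\item It uses diagonalizability only through $\kappa(P)$ and gives an explicit constant.
\item It does not use the centering $\nabla r(0)=0$.
\item It needs no smallness of $\delta$ beyond strong convexity of $V$; the bound is simply vacuous when $\lambda\le 0$.
\item It yields the stronger conclusions of pathwise and Wasserstein contraction.
\item It also reproduces the paper's $\delta^{1/m}$ loss for a Jordan block of size $m$, via an $\eta$-adapted norm with condition number of order $\eta^{1-m}$.
\end{enumerate}
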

The proof is provided in \Cref{app:thm:spectral-gap-stability}.

\begin{rmk}
This theorem is a stability statement for diagonalizable drifts \((I+J^*)\mathcal F_\pi\) produced by \(J^* \in \mathcal J_\mathrm{s}(\mathcal F_\pi)\), which also can be thought of as spectrally optimal perturbations for a Gaussian \(\mathcal N(0,\mathcal F_\pi^{-1})\). It should not be read as a guarantee for every $J^\ast$ that additionally solves \eqref{eq:P3}, as the drift matrix produced by such $J^\ast$ might not be diagonalizable over $\mathbb{C}$.
\end{rmk}

\begin{rmk}[Fisher estimation error]
The matrix $\mathcal F_\pi$ is often replaced by an approximation $\widehat{\mathcal F}\succ0$. Set $\eta=\|\widehat{\mathcal F}-\mathcal F_\pi\|_2$ and rewrite
\[
    V(x)=\frac12 x^\top \widehat{\mathcal F}x+\widehat r(x),
    \qquad
    \nabla^2\widehat r(x)=\nabla^2 r(x)+S-\widehat{\mathcal F}.
\]
Then $\sup_x\|\nabla^2\widehat r(x)\|_2\leq 2\delta+\eta$. The proof of \Cref{thm:spectral-gap-stability}
relies on properties of the quadratic term of $V$, so the same argument applies with the estimate $\widehat{\mathcal F}$.
Hence, if $\widehat{J}^* \in \mathcal J_\mathrm{s}(\widehat{\mathcal F})$, the drift matrix $(I+\widehat{J}^*)\widehat{\mathcal F}$ is diagonalizable, and $2\delta+\eta$ is sufficiently small relative to $\sigma_{\min}(\widehat{\mathcal F})$, the same argument gives
\[
    \gamma(\widehat{J}^*) \geq \frac{\mathrm{Tr}(\widehat{\mathcal F})}{d} - C(2\delta+\eta).
\]
\end{rmk}

\paragraph{Adaptive estimation of the Fisher information matrix.}\label{sec:adaptive_FIM}
In practice, the Fisher information matrix $\mathcal F$ is typically unknown and must be estimated during sampling.
We employ a streaming estimator that updates the matrix online, using score observations:
\begin{equation}\label{eq:adapted-fisher}
\begin{aligned}
    \text{Initialization:} \quad &\widehat{\mathcal{F}}_0 = I_d, \\
    \text{Iteration:} \quad &\widehat{\mathcal{F}}_n =
        \widehat{\mathcal{F}}_{n-1} + \frac{1}{K+n}\left( s_n s_n^\top - \widehat{\mathcal{F}}_{n-1} \right), & n \geq 1,
\end{aligned}
\end{equation}
where $s_n = \nabla\log\pi(X_n)$ is the score at iteration $n$ and $K>0$ controls the transition from initialization to online updates. This estimator is consistent and numerically stable, as illustrated in the numerical experiments of \Cref{sec:numerical}.

\paragraph{Aligned local geometries.}
Beyond the near-Gaussian regime, the Fisher information matrix can still be informative in cases where multiple modes share similar local geometry, even though obtaining formal guarantees in this setting is challenging. When the target distribution has multiple modes with similar shapes and orientations, the global Fisher information matrix aggregates their local curvature information in a coherent manner.
\Cref{fig:fisher_geometry} illustrates this phenomenon in two dimensions: when modes are misaligned (left), their local contributions cancel and $\mathcal{F}$ loses directional information; when modes are geometrically aligned (middle and right), $\mathcal{F}$ reflects their shared local structure.
\begin{figure}[htbp]
\centering
\begin{minipage}[t]{0.3\textwidth}
    \centering
    \begin{subfigure}[b]{\textwidth}
        \includegraphics[width=\textwidth]{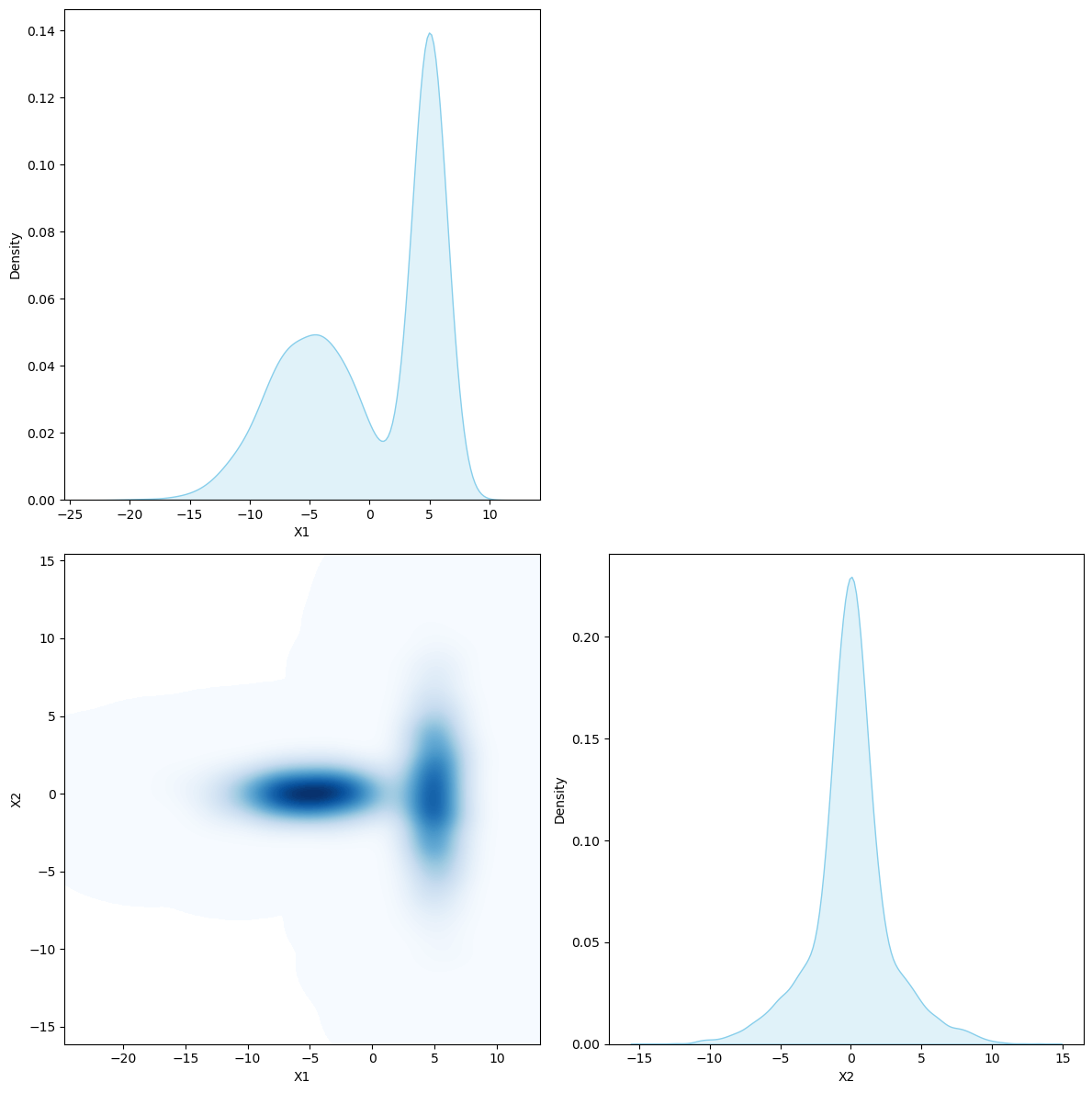}
    \end{subfigure}
\end{minipage}
\begin{minipage}[t]{0.3\textwidth}
    \centering
    \begin{subfigure}[b]{\textwidth}
        \includegraphics[width=\textwidth]{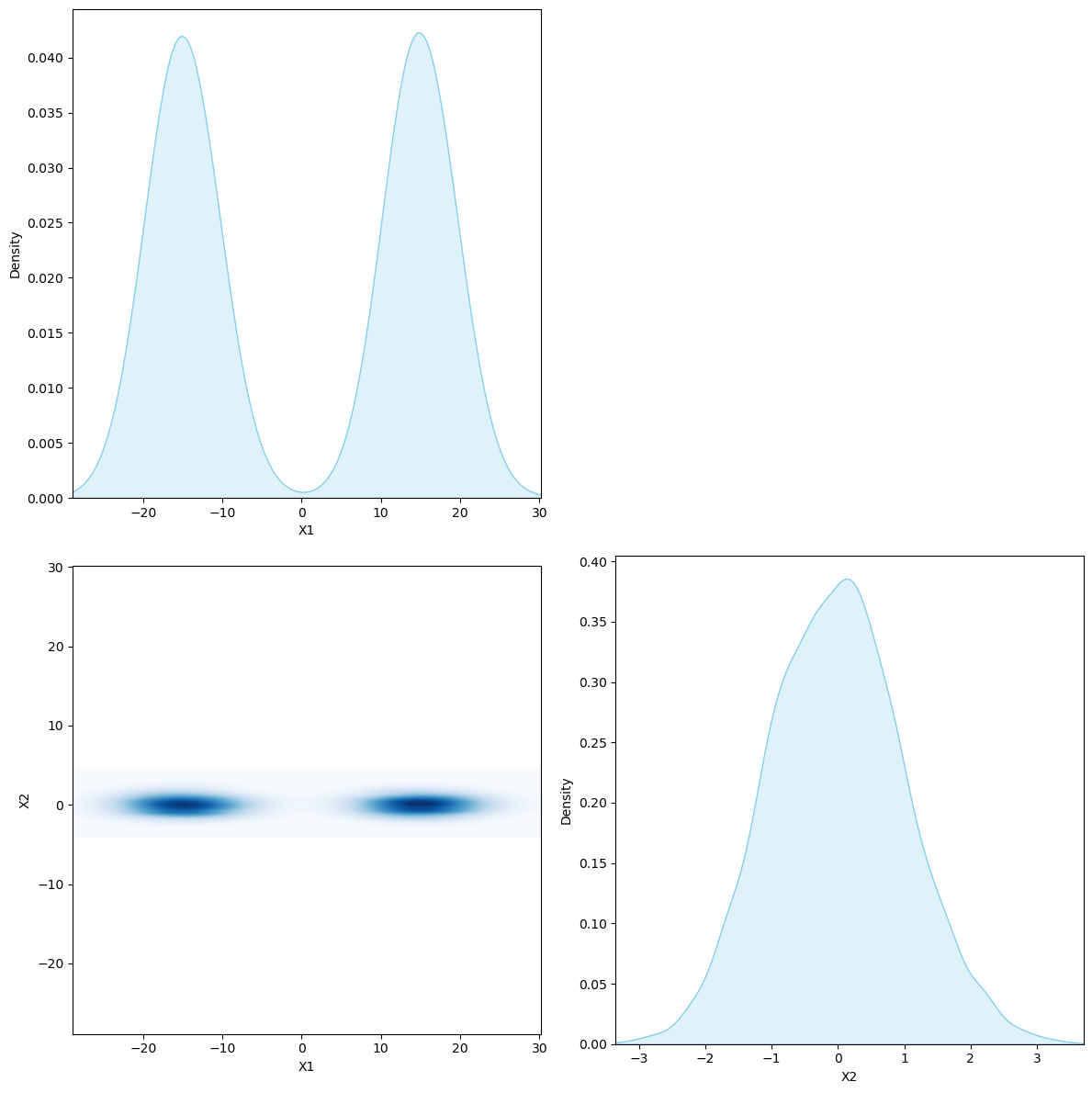}
    \end{subfigure}
\end{minipage}
\begin{minipage}[t]{0.3\textwidth}
    \centering
    \begin{subfigure}[b]{\textwidth}
        \includegraphics[width=\textwidth]{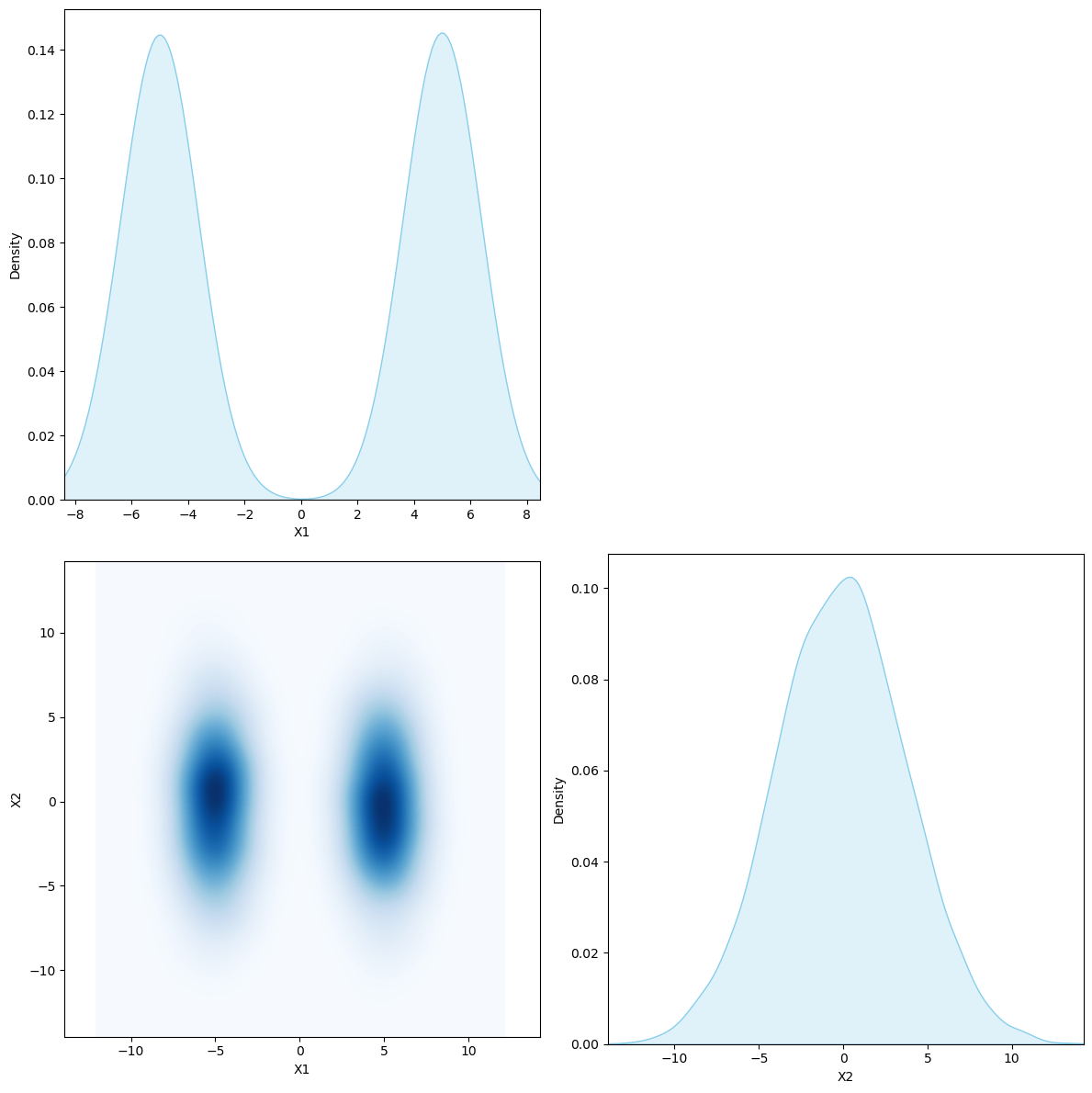}
    \end{subfigure}
\end{minipage}
\caption[Effect of mode geometry on Fisher matrix effectiveness]{Effect of mode geometry on Fisher information matrix effectiveness. Each panel shows a bimodal distribution and its corresponding global Fisher information matrix \eqref{Eq:FisherGeneral}. Left: misaligned modes yield $\mathcal{F}_1 = \text{diag}(0.5, 0.5)$; local geometries cancel and $\mathcal{F}_1$ loses directional information. Middle and right: aligned modes yield $\mathcal{F}_2 = \text{diag}(0.07, 1)$ and $\mathcal{F}_3 = \text{diag}(1, 0.07)$, respectively; the global Fisher matrix reflects the shared local curvature.}
\label{fig:fisher_geometry}
\end{figure}

Together, these results suggest that the $\mathcal F$-based optimization is most effective for targets that are either (i) approximately Gaussian, or (ii) multi-modal with geometrically aligned modes. We will verify these guidelines empirically in \Cref{sec:numerical}.

\section{Numerical experiments}\label{sec:numerical}

In this section, we evaluate our method against several alternative irreversible perturbations on four benchmark problems: a non-isotropic Gaussian (\Cref{sec:Gaussian}), a Gaussian mixture (\Cref{sec:mix:of:Gaussian}), Bayesian logistic regression (\Cref{sec:BLR}), and a Bayesian formulation of independent component analysis (\Cref{sec:ICA}). For each problem, we compare MCMC estimates of representative observables against ground truth values obtained analytically or from high-accuracy MALA simulations.

\paragraph{Methods.} We run ULA with the six types of irreversible perturbations summarized in \Cref{tab:methods}. These comprise the unperturbed case ($J=0$) and five other choices of $J$.
\begin{table}[h]
\centering
\begin{tabular}{l p{12cm}}
\toprule
\textbf{Name} & \textbf{Description} \\
\midrule
\texttt{unperturbed} & $J=0$. \\[3pt]

\texttt{rand-S/M/L} & $J$ with random entries and rescaled such that
$\|J\|_F \in \{\frac12 C, C, \frac32 C\}.$ \\[3pt]

\texttt{spec} & $J \in \mathcal{J}_s(\mathcal{F})$, i.e., with generalized spectral optimality. \\[3pt]

\texttt{spec-E} & $J \in \mathcal{J}_{\mathrm{se}}(\mathcal{F})$, i.e., with generalized spectral optimality and minimal ESJD$^{\mathcal{F}}$. \\
\bottomrule
\end{tabular}
\begin{minipage}{\linewidth}
\smallskip
\end{minipage}
\caption{Summary of perturbations in our numerical experiments.}
\label{tab:methods}
\end{table}
In this table, $C$ is the  Frobenius norm of $J$ from \texttt{spec-E}, averaged over randomness in Algorithm~\ref{algo:algorithm}. $\mathcal F$ is given by \eqref{Eq:FisherGeneral}. Note that neither \texttt{spec} nor \texttt{spec-E} is unique: both depend on the random initial basis supplied to the constructive algorithm, so each run produces a different $J$ from the solution sets $\mathcal J_{\mathrm{s}}$ or $\mathcal J_{\mathrm{se}}$.

\paragraph{Evaluation metrics.}
For an observable $\varphi:\mathbb{R}^d \to \mathbb{R}$,
the ULA estimator of $\mathbb{E}_\pi [\varphi(X)]$ based on $K$ steps is $\bar{\varphi}_K \;=\; \frac{1}{K} \sum_{k=0}^{K-1} \varphi(X_k)$.
We evaluate the performance of the estimator using bias and variance estimated from $M$ independent chains:
\begin{align*}
    \widehat{\mathrm{Bias}}(\bar{\varphi}_K) &= \frac{1}{M}\sum_{m=1}^{M}\bar{\varphi}_K^{(m)} - \mathbb{E}_\pi[\varphi(X)], \quad\quad
     \widehat{\mathrm{Var}}(\bar{\varphi}_K) = \frac{1}{M}\sum_{m=1}^{M}\left(\bar{\varphi}_K^{(m)}\right)^2 - \left(\frac{1}{M}\sum_{m=1}^{M}\bar{\varphi}_K^{(m)}\right)^2,
\end{align*}
where $\bar{\varphi}_K^{(m)}$ denotes the estimate from the $m^\mathrm{th}$ chain. The MSE of the estimator is then simply $\widehat{\mathrm{Bias}}(\bar{\varphi}_K)^2 + \widehat{\mathrm{Var}}(\bar{\varphi}_K) $.
The values of $K$ and of the ULA step size $h$ will be varied systematically in the experiments below.

The chains for the first two experiments are initialized randomly from the target distribution; those for the last two experiments are initialized at $\mathbf{0}$ and at a random diagonal sign matrix $\text{diag}(\pm1)$, with burn-in intervals of $0$ and $5000$ steps, respectively, discarded before computing any $\bar{\varphi}_K$.

In addition, we evaluate numerical stability by examining the probability that an estimator fails to converge, in the sense that it exceeds a prescribed threshold chosen to be significantly larger than the true value of $\mathbb{E}_\pi[\varphi(X)]$. In plots of MSE versus step size $h$, we exclude points for which the averaged estimates exceed this threshold, as this provides a sufficient (though not necessary) indication of divergence. Across the various experiments, we find that \texttt{spec}, \texttt{rand-M}, and \texttt{rand-L} are substantially more numerically unstable, a phenomenon that is particularly evident in Experiment 2. Accordingly, we classify these methods as ``discretization-sensitive,'' in contrast to the remaining ``discretization-robust'' methods.

\subsection{Non-isotropic Gaussian}\label{sec:Gaussian}
We begin with a zero-mean Gaussian target with covariance $S^{-1} \;=\;\text{diag}(1,4,16,64)$, providing a controlled setting where $\mathcal F = S$ is known exactly. Let the components of the state vector $X$ be denoted as $X = (X^{(1)}, X^{(2)}, X^{(3)}, X^{(4)})$.

\paragraph{Experiment 1: MSE versus step size.}
\Cref{fig:gaussian_fix_time} and~\Cref{fig:gaussian_fix_budget} compare the MSE, bias, and variance of estimators of the expected $\ell_1$ norm $\|X\|_1=\sum_{i=1}^4 |X^{(i)}|$
and the tail probability $P(X^{(4)}>16)$ ,
across step sizes $h \in [0.02, 0.4]$, with either fixed simulation time ($T = hK =10\,000$) \textit{or} fixed computational budget ($K = 10^5$).
We also test our method \texttt{spec-E} with adaptive estimation of the Fisher information matrix as described in \Cref{sec:adaptive_FIM}, which is labeled as \texttt{adaptive}.
Results for additional observables $\phi$ are reported in \Cref{app:numerical:iso-gaussian}.

The methods split into two groups. The discretization-unaware methods \texttt{spec}, \texttt{rand-M}, and \texttt{rand-L}, which do not involve $\text{ESJD}^W$ minimization, diverge for $h>0.05$ (red/orange curves in both figures).
Other methods, namely  \texttt{rand-S}, \texttt{spec-E}, and \texttt{unperturbed}, remain well-behaved across all $h$ (blue curves) with \texttt{spec-E} converging the most quickly, achieving MSE roughly an order of magnitude lower than the other stable methods. Note that the $J$ of \texttt{rand-M} has exactly the Frobenius norm as that of \texttt{spec-E}, showing that stability is not only a matter of the scale of the perturbation.
Also notably, \texttt{adaptive} nearly matches the performance of the latter despite having no access to the true Fisher information matrix, estimating it on-the-fly from samples alone. The fact that the MSE of \texttt{adaptive} remains only marginally larger than that of \texttt{spec-E} demonstrates that our method is {robust} to Fisher information matrix misspecification---an issue that arises in practice when the target distribution is not analytically tractable.

The fixed-$T$ and fixed-$K$ experiments highlight complementary trade-offs. Under fixed simulation time (\Cref{fig:gaussian_fix_time}), \texttt{spec-E} achieves lower variance for all $h$ (right column) without sacrificing bias (middle column), reflecting its maximized spectral gap. Under fixed computational budget (\Cref{fig:gaussian_fix_budget}), increasing $h$ reduces variance through longer simulation time but increases bias through discretization error. The resulting U-shaped MSE curves (left column) show that  \texttt{spec-E} achieves the lowest MSE across a wide range of $h$, demonstrating robustness to step size selection.

\paragraph{Experiment 2: Proxy comparison}
\Cref{fig:adaptive-ellipse} compares the performance of \texttt{spec-E} with that of two alternative discretization-aware pertubations, obtained by solving \eqref{eq:P1} but with its objective \emph{replaced} by $\mathcal{E}_1$ or $\mathcal{E}_3$. Since the latter optimization problems are not analytically tractable, we approximate their minimizers by Monte Carlo search over $\mathcal{J}_s$. Using $\mathcal E_2$ yields the most accurate estimation, for most values of $h$. Improved performance over $\mathcal{E}_{ \{ 1,3 \} }$ may be due to two complementary factors: first, minimizing \(\mathcal E_2\) admits an explicit and numerically stable construction; second, \(\mathcal E_2\) itself seems to better correlate with discretization error and thus provides a more effective criterion for selecting \(J\) in this example.

\begin{figure}[htbp]
\centering
\begin{minipage}[t]{0.89\textwidth}
    \centering
    \includegraphics[width=\textwidth]{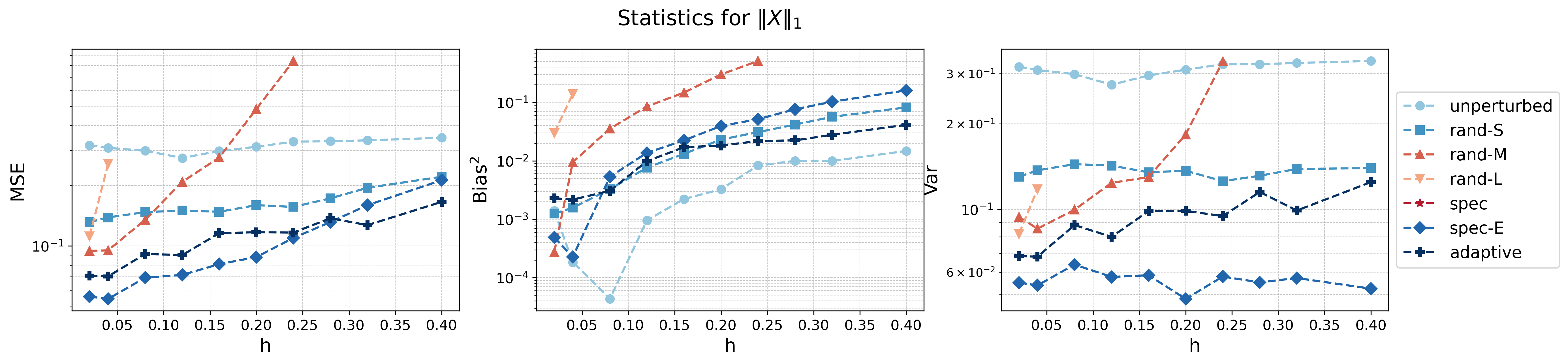}
    \includegraphics[width=\textwidth]{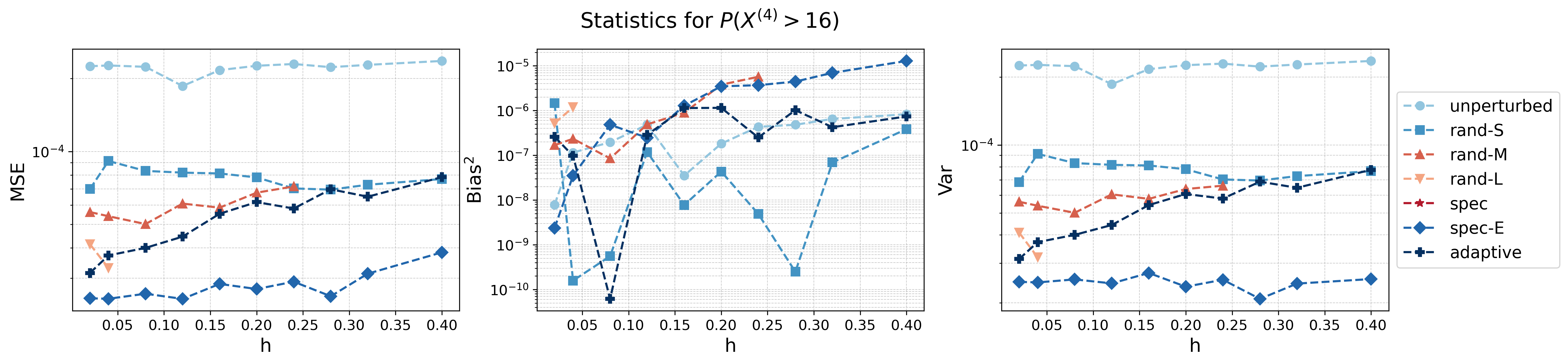}
\end{minipage}
\caption[Non-isotropic Gaussian, MSE vs. step size, fixed time]{Non-isotropic Gaussian, fixed simulation time $T = hK= 10{,}000$. Plots show MSE, squared bias, and variance for step sizes $h \in [0.02, 0.4]$ and two different observables. Statistics are computed using $M = 512$ replicated MCMC chains. Irreversible perturbations accelerate convergence but introduce additional bias and tend to be unstable unless they are kept small or, more explicitly, account for discretization effects via ESJD$^W$ minimization.}
\label{fig:gaussian_fix_time}
\end{figure}

\begin{figure}[htbp]
\centering
\begin{minipage}[t]{0.89\textwidth}
    \centering
    \includegraphics[width=\textwidth]{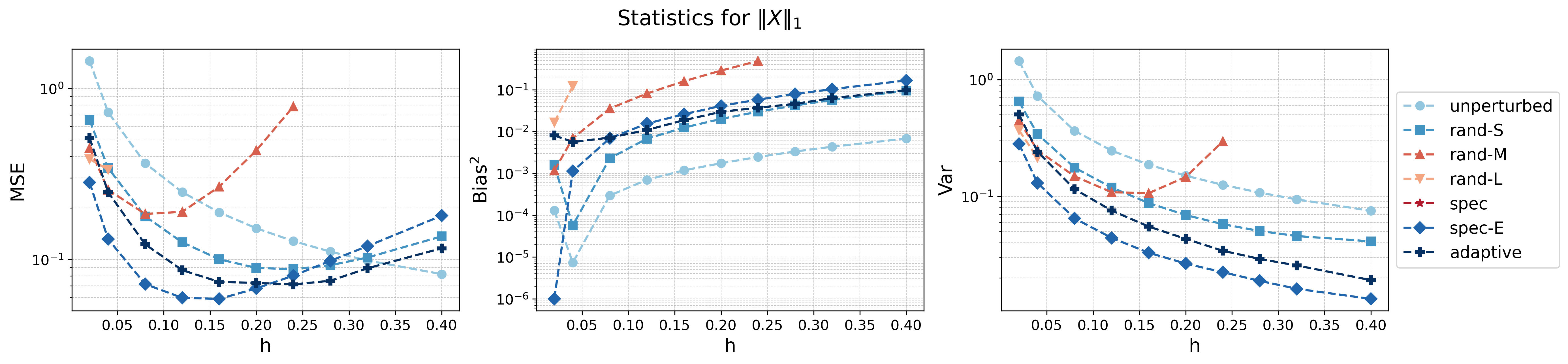}
    \includegraphics[width=\textwidth]{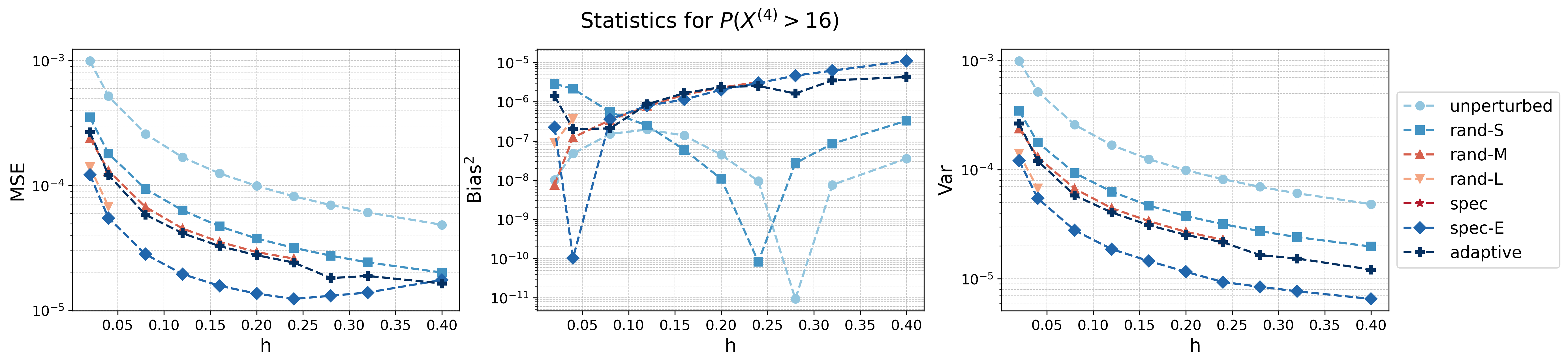}
\end{minipage}
\caption[Non-isotropic Gaussian, MSE vs. step size, fixed budget]{Non-isotropic Gaussian, fixed computational budget $K = 10^5$. Plots show MSE, squared bias, and variance for varying $h$ and two different observables. Statistics are computed using $M = 512$ replicated MCMC chains. Unstable methods (\texttt{spec}, \texttt{rand-M}, \texttt{rand-L}) diverge for $h>0.1$. Among stable methods, \texttt{spec-E} achieves the lowest MSE across all step sizes.}
\label{fig:gaussian_fix_budget}
\end{figure}

\begin{figure}[htbp]
    \centering
    \includegraphics[width=0.95\linewidth]{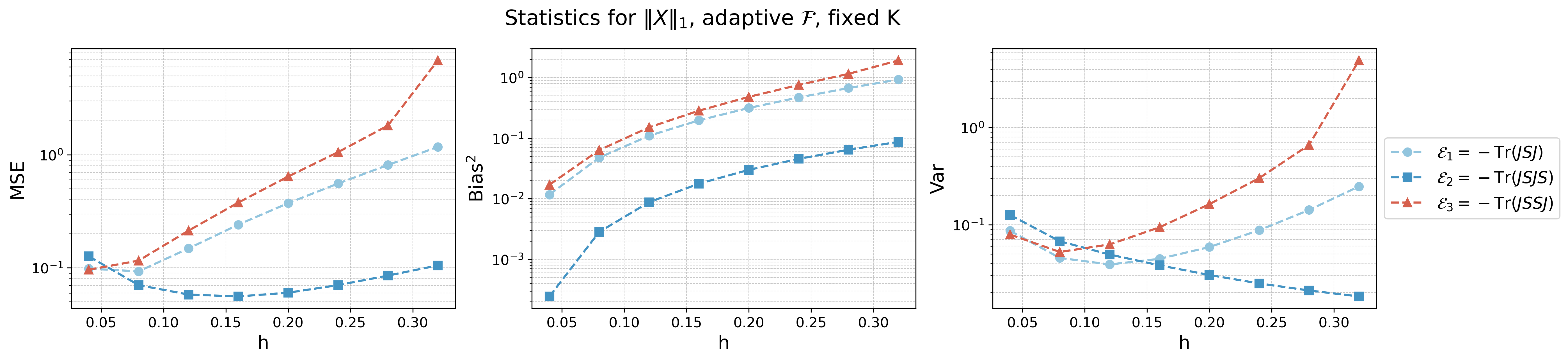}
    \includegraphics[width=0.95\linewidth]{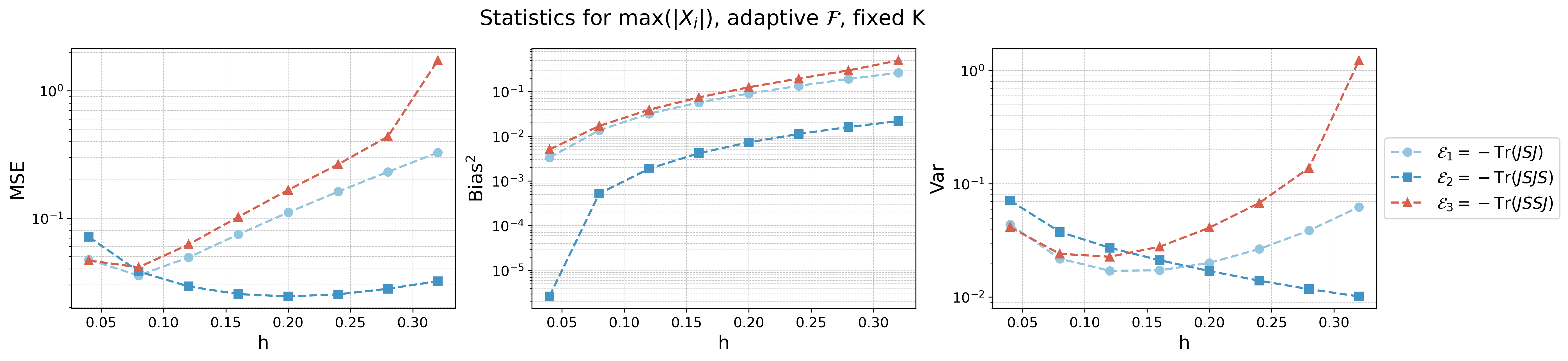}
    \caption{Non-isotropic Gaussian, fixed budget $K = 10^5$. Plots show MSE, squared bias, and variance of two different observables (top and bottom) for varying $h$.
    Statistics are computed using $M = 512$ replicated MCMC chains.
    Lines show the performance of \texttt{spec} perturbations additionally minimizing different objectives: $\mathcal{E}_1$, $\mathcal{E}_2$ (same as \texttt{spec-E}), and $\mathcal{E}_3$.}
    \label{fig:adaptive-ellipse}
\end{figure}

\subsection{Mixture of Gaussians}\label{sec:mix:of:Gaussian}

We next consider a random vector $X=(X^{(1)},X^{(2)},X^{(3)})\in\mathbb R^3$ whose target distribution is a mixture of five Gaussian components (\Cref{fig:multi-gaussian-density}).
The modes are separated along the first coordinate, so efficient exploration requires transitions between distant regions.
At the same time, the local geometry within each mode is similar, and the global Fisher information matrix remains aligned with this shared local structure.
This example therefore isolates a favorable multi-modal setting in which the Fisher information matrix provides an effective global summary of the target geometry, consistent with the discussion in \Cref{sec:adaptive_FIM}.
Further details on the problem setup are provided in \Cref{app:numerical:mix-gaussian}.

\paragraph{Experiment 1: Stability and trajectory behavior.}
\Cref{fig:multi-gaussian-ESJD-diverge} reports the number of divergent chains among 128 independent runs, where divergence is declared if the empirical estimator of $\E_\pi [\|X\|_1]$ or $\E_\pi [\max_i(|X^{(i)}|)]$ exceeds \(50\) after \(T=10{,}000\) iterations, a threshold chosen to be more than twice the corresponding true value.
The discretization-sensitive methods \texttt{spec}, \texttt{rand-M}, and \texttt{rand-L} become increasingly unstable as the step size grows: by $h=0.3$, nearly all chains diverge for \texttt{spec} and \texttt{rand-L}. In contrast, the discretization-robust methods \texttt{unperturbed}, \texttt{rand-S}, and \texttt{spec-E} rarely diverge.

The trajectory visualizations in \Cref{fig:multi-gaussian-trajectory} further explain these stability trends. The
\texttt{unperturbed} and \texttt{rand-S/M} chains often fail to visit all modes, while \texttt{rand-L/spec} can over-explore and produce unstable trajectories.
The properly regularized \texttt{spec-E} achieves the desired balance: it improves inter-modal exploration while controlling discretization-induced instability.

\paragraph{Experiment 2: MSE versus step size.}
\Cref{fig:multi-gaussian-estimator} reports MSE, bias, and variance for estimators of $\mathbb{E}_\pi[ \max_i  |X^{(i)}|  ]$
and $\Pr(X^{(1)}>20)$ under fixed simulation time $T = 4000$.
The pattern mirrors the Gaussian results: \texttt{spec-E} achieves the lowest MSE across a broad range of $h$, while \texttt{unperturbed/rand-S} mix poorly at small $h$ and \texttt{rand-M/rand-L/spec} accumulate bias at large $h$.
Additional statistics and fixed-budget experiments are in \Cref{app:numerical:mix-gaussian}.

\begin{figure}[htbp]
\centering
\begin{minipage}[t]{0.8\textwidth}
    \centering
    \begin{subfigure}[b]{\textwidth}
        \includegraphics[width=\textwidth]{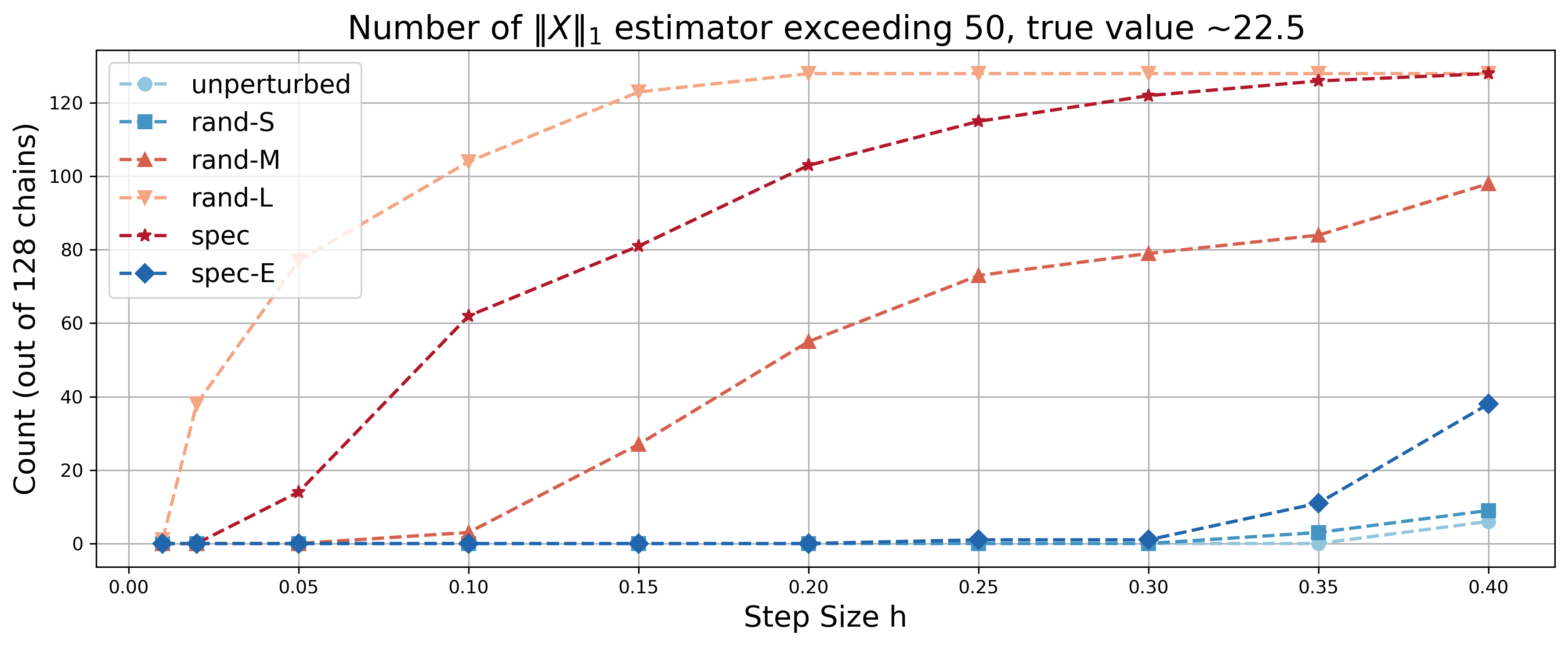}

        \includegraphics[width=\textwidth]{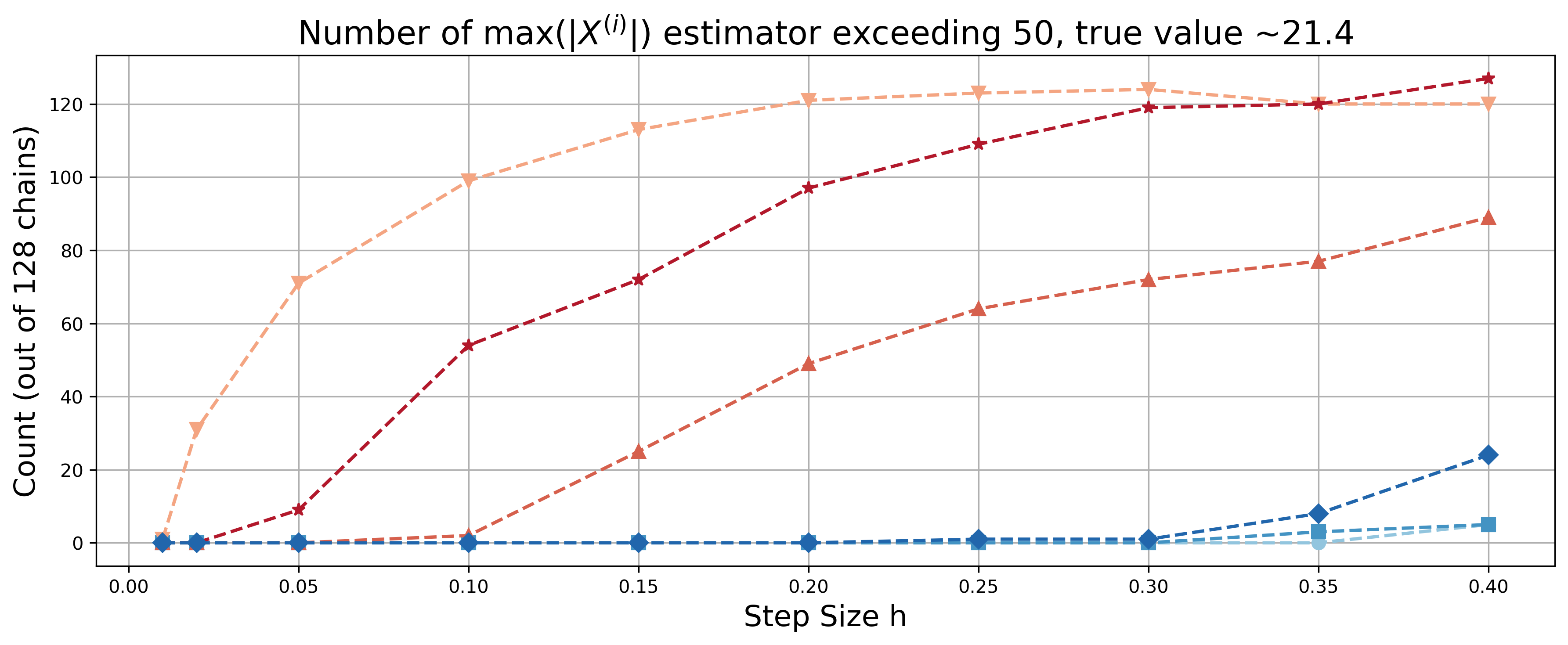}
    \end{subfigure}
\end{minipage}
\caption{Mixture of Gaussians example. Divergence frequency for estimators of the expectation of $\|X\|_1$ (top) and of maximum absolute entry $\max_i(|X^{(i)}|)$ (bottom), versus step size $h$. The methods (\texttt{spec}, \texttt{rand-L}) show frequent divergence; \texttt{spec-E} remains stable.}
\label{fig:multi-gaussian-ESJD-diverge}
\end{figure}

\begin{figure}[htbp]
\centering
\begin{minipage}[t]{\textwidth}
    \centering
    \begin{subfigure}[b]{\textwidth}
        \includegraphics[width=\textwidth]{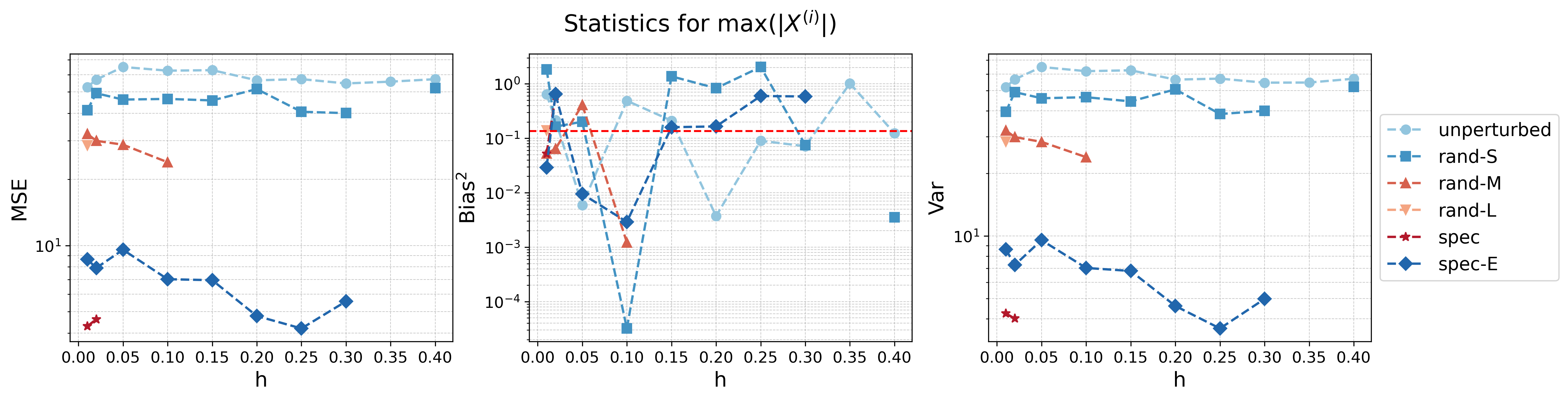}
        \includegraphics[width=\textwidth]{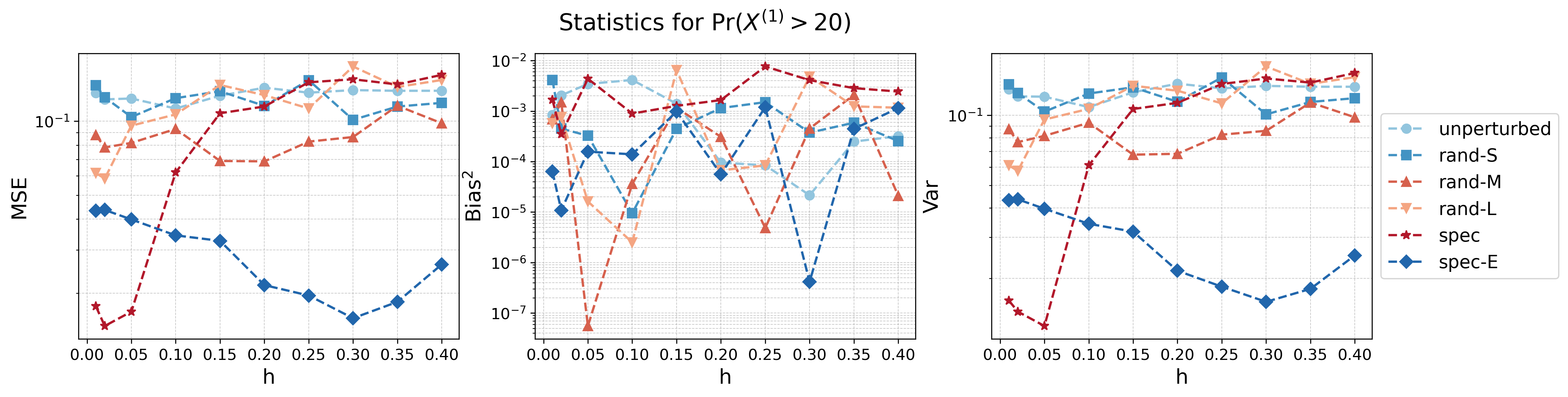}
    \end{subfigure}
\end{minipage}
\caption{Mixture of Gaussians, fixed simulation time $T = 4000$.
Plots show MSE, bias, and variance of estimators of $\E_\pi [\max_i( |X^{(i)}|)]$ (top) and $\Pr(X^{(1)} > 20)$ (bottom) for varying $h$. Statistics are computed using $M=128$ chains. \texttt{spec-E} achieves the lowest MSE across step sizes; \texttt{spec, rand-M/L} accumulate bias and quickly blow up as $h$ grows.}
\label{fig:multi-gaussian-estimator}
\end{figure}

\begin{figure}[htbp]
\centering
\begin{minipage}[t]{0.35\textwidth}
    \centering
    \begin{subfigure}[b]{\textwidth}
        \includegraphics[width=\textwidth]{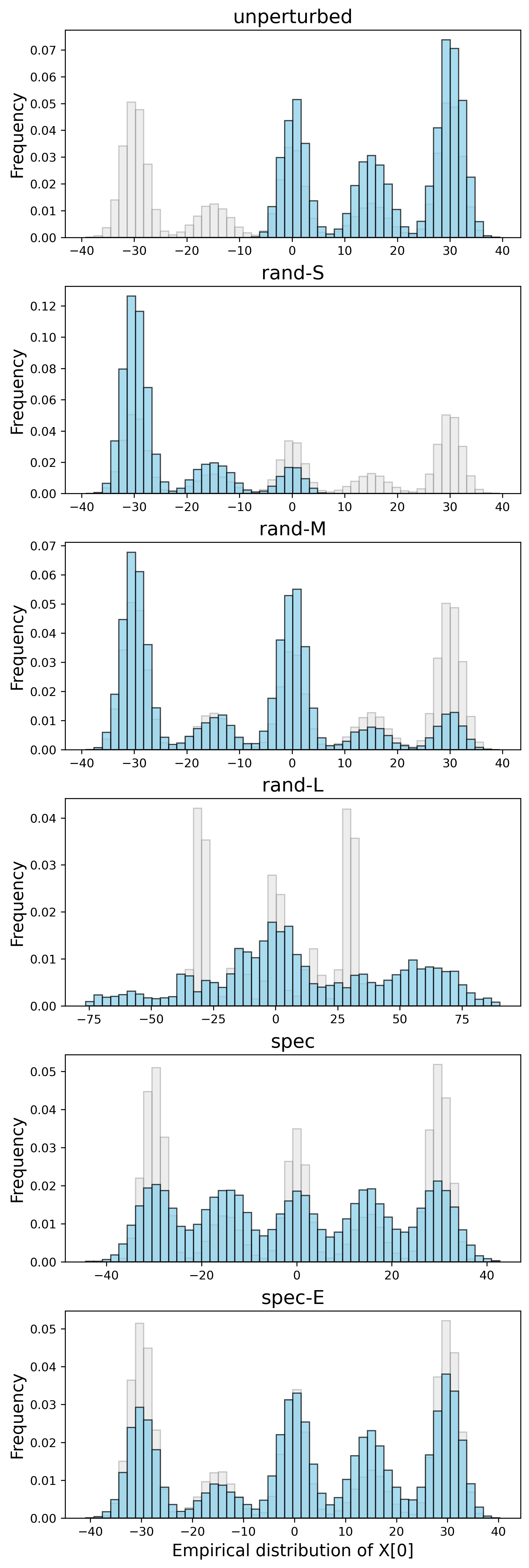}
    \end{subfigure}
\end{minipage}
\begin{minipage}[t]{0.35\textwidth}
    \centering
    \begin{subfigure}[b]{\textwidth}
        \includegraphics[width=\textwidth]{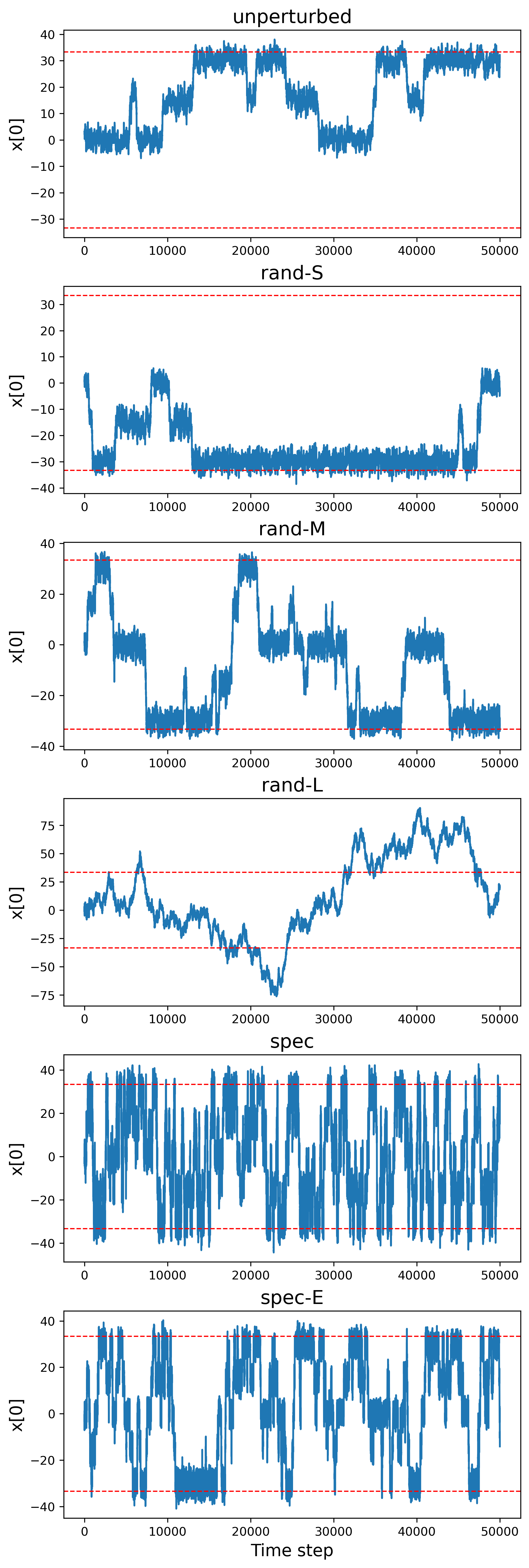}
    \end{subfigure}
\end{minipage}
\caption{Mixture of Gaussians: trajectory visualizations. Left: true marginal of the first coordinate (gray) versus a sample histogram (blue). Right: trajectory of a single representative chain along the first coordinate, with a centered 90\% interval marked by red dashed lines. \texttt{spec-E} visits all modes with balanced exploration.}
\label{fig:multi-gaussian-trajectory}
\end{figure}

\subsection{Bayesian logistic regression}\label{sec:BLR}
Next we apply our method to Bayesian logistic regression on the \texttt{german} dataset from the benchmark collection used by \citet{gershman2012nonparametric} and compiled by \citet{mika1999fisher} ($d = 20$ features, $N = 400$ observations).
Given data $\{ (Z_i, t_i) \}_{i=1}^N$ with $ Z = [z_1,\dots, z_N]\in\mathbb R^{d\times N}$, $t_i \in \{0, 1\}$, the log posterior for weights $X \in \mathbb{R}^d$ under a Gaussian prior $X\sim\mathcal N(0,\alpha^{-1} I)$ is
\begin{equation}
    \log\pi(X\mid Z) = -\frac{\alpha}{2}\|X\|^2 + \sum_{i=1}^N \left[ t_i z_i^\top X - \log(1 + \exp(z_i^\top X)) \right] + \mathrm{const}.
\end{equation}
Define the logistic function $\varphi(y) = (1+\exp(-y))^{-1}$. The gradient of the log-posterior is
\begin{equation}
    \nabla_{X} \log \pi(X \mid Z)
    = -\alpha X + \sum_{i=1}^N t_i z_i
      - \sum_{i=1}^N \varphi(z_i^\top X) \, z_i.
\end{equation}
The posterior of $X$ is unimodal but anisotropic (see \Cref{fig:blr_density} in \Cref{app:numerical:blr}).

For the numerical experiments, we use stochastic gradient Langevin dynamics \citep{Welling2011, teh2016consistency} and subsample at a rate of $n=10$ data points per likelihood computation, leading to the following drift
\begin{equation}
    \nabla_{X} \log \tilde \pi(X \mid Z)
    = -\alpha X + \frac{N}{n}\sum_{i=1}^n t_{\tau_i} z_{\tau_i}
      - \frac{N}{n}\sum_{i=1}^n \varphi(z_{\tau_i}^\top X) \, z_{\tau_i}
\end{equation}
where $\{\tau_i\}_{i=1}^n$ are the indices of the subsampled data.

\paragraph{Experiment: MSE versus step size.}\Cref{fig:blr_statistics} reports MSE, bias, and variance of estimators of  $\E_\pi[ |X^{(5)}|]$ and $P(X^{(5)}>2)$ under fixed computational budget $K=100,000$; estimator is estimated using $M=128$ independent chains.
Unlike the Gaussian targets considered above, this posterior has asymmetric tail behavior: along some directions, the density has flatter tails on one side and steeper decay on the other.
Thus, while $\mathcal F$ no longer exactly characterizes the target geometry, it still captures directions in which the log-density changes rapidly.

All methods remain numerically stable in this example, unlike in the Gaussian experiments where several unregularized methods diverged. Nevertheless, the unregularized spectral perturbation \texttt{spec} performs poorly.
The random perturbations and \texttt{unperturbed} exhibit moderate MSE, limited by higher variance.
\texttt{spec-E} retains much of the variance reduction from other irreversible perturbations while controlling bias in steep directions, and therefore achieves the lowest MSE across step sizes. Results for additional statistics and under fixed simulation time $T$ are in \Cref{app:numerical:blr}.

\begin{figure}[htbp]
\centering
\begin{minipage}[t]{0.85\textwidth}
    \centering
    \begin{subfigure}[b]{\textwidth}
        \includegraphics[width=\textwidth]{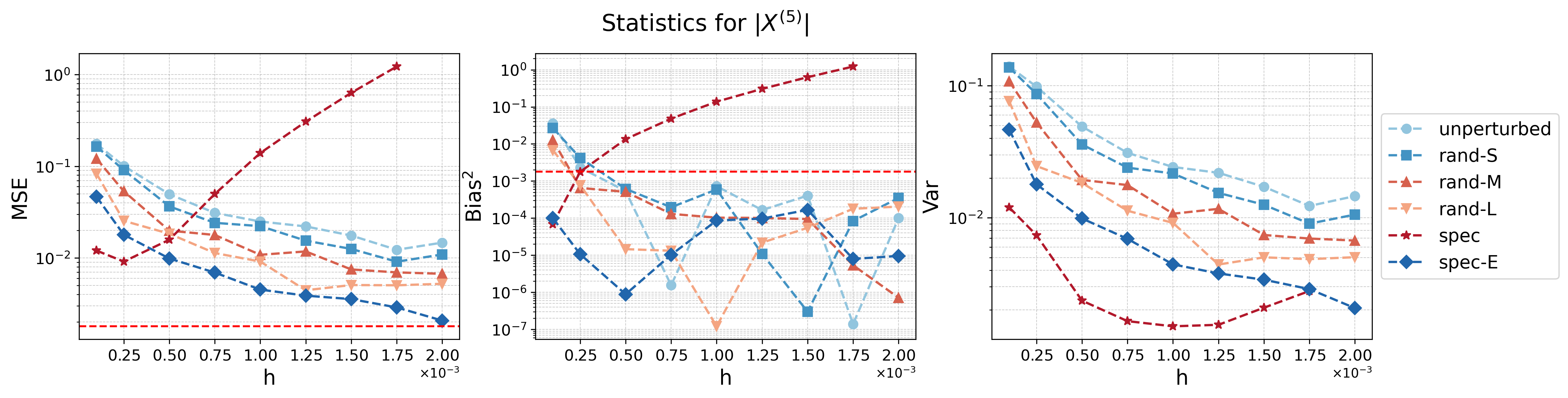}
        \includegraphics[width=\textwidth]{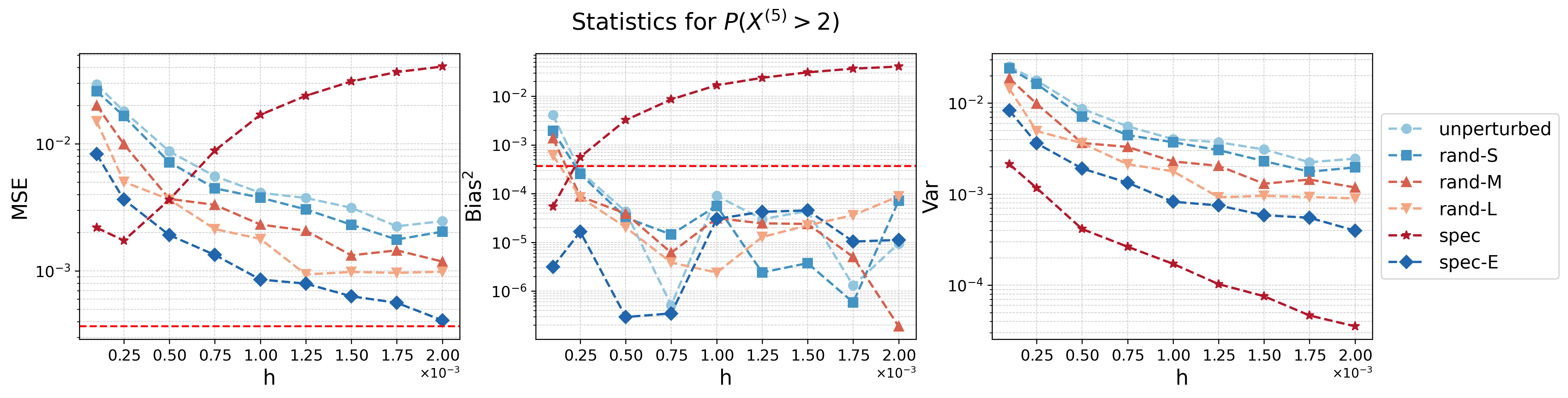}
    \end{subfigure}
\end{minipage}
\caption[Bayesian logistic regression, MSE versus step size]{Bayesian logistic regression, fixed simulation time $T = 2000$. Plots show the MSE, bias, and variance of estimators of $\E_\pi[ |X^{(5)}|]$ (top) and $P(X^{(5)}>2)$ (bottom) under varying $h$, computed using $M=128$ chains.
Our method, \texttt{spec-E}, achieves the smallest MSE for all tested values of $h$. \texttt{spec} is numerically unstable; the other baselines remain stable but exhibit larger estimation variance. \label{fig:blr_statistics}}
\end{figure}

\subsection{Independent component analysis}\label{sec:ICA}

The last application is a blind signal separation problem studied in \citet{Welling2011, Amari1996}. Given $N$ observations of $m$ mixed signals $x_t = Ms_t\in\mathbb R^m$, where $M\in\mathbb R^{m\times m}$ is an unknown mixing matrix and $s_t\in\mathbb R^m$ contains the original source signals, the goal is to infer a de-mixing matrix $W\in\mathbb R^{m\times m}$ such that $Wx_t$ recovers the sources up to scaling and permutation.

We adopt a Bayesian formulation with Gaussian prior $ W_{ij} \stackrel{\mathrm{i.i.d.}}{\sim} \mathcal{N}(0, \lambda^{-1})$.
Following standard ICA practice, each recovered component $y = [Wx]_i$ is assumed to follow a hyperbolic secant distribution $p(y) = \frac14 \,\mathrm{sech}^2\!\left(\frac{y}{2}\right)$, yielding the dataset likelihood
\begin{equation}
    p(X| W) = \prod_{t=1}^N|\det W| \prod_{i=1}^m p([W x_t]_i) .
\end{equation}
The log-posterior gradient is
\begin{equation}
    \nabla_{W} \log p(W|X) = N (W^\top)^{-1}
- \sum_{t=1}^N \tanh\!\left(\frac12 Wx_t\right) x_t^\top
- \lambda W ,
\end{equation}
where the $\tanh$ is applied element-wise, and $X = [x_1,\dots, x_N]\in\mathbb R^{m\times N}$.
We use $m=3$ independent source signals $s_t^1, s_t^2, s_t^3$ and $N=400$ observations, yielding a $d = m^2 = 9$ dimensional posterior.
Stochastic gradients use minibatch size $n = 40$.
The initial condition is a diagonal matrix with entries $\pm 1$ chosen uniformly at random.
The posterior is strongly multimodal with symmetric modes (\Cref{fig:ICA_density}), making this the most challenging test case. Reference samples and expectations are obtained from long MALA chains with $K=10^7$ steps and step size $h=5\times 10^{-5}$, using Riemannian manifold preconditioning together with a fixed irreversible perturbation.
Our test runs use $K=10^{7}$ steps with step size $h=2\times10^{-4}$.

\begin{figure}[htbp]
  \centering
  \includegraphics[width=0.8\textwidth]{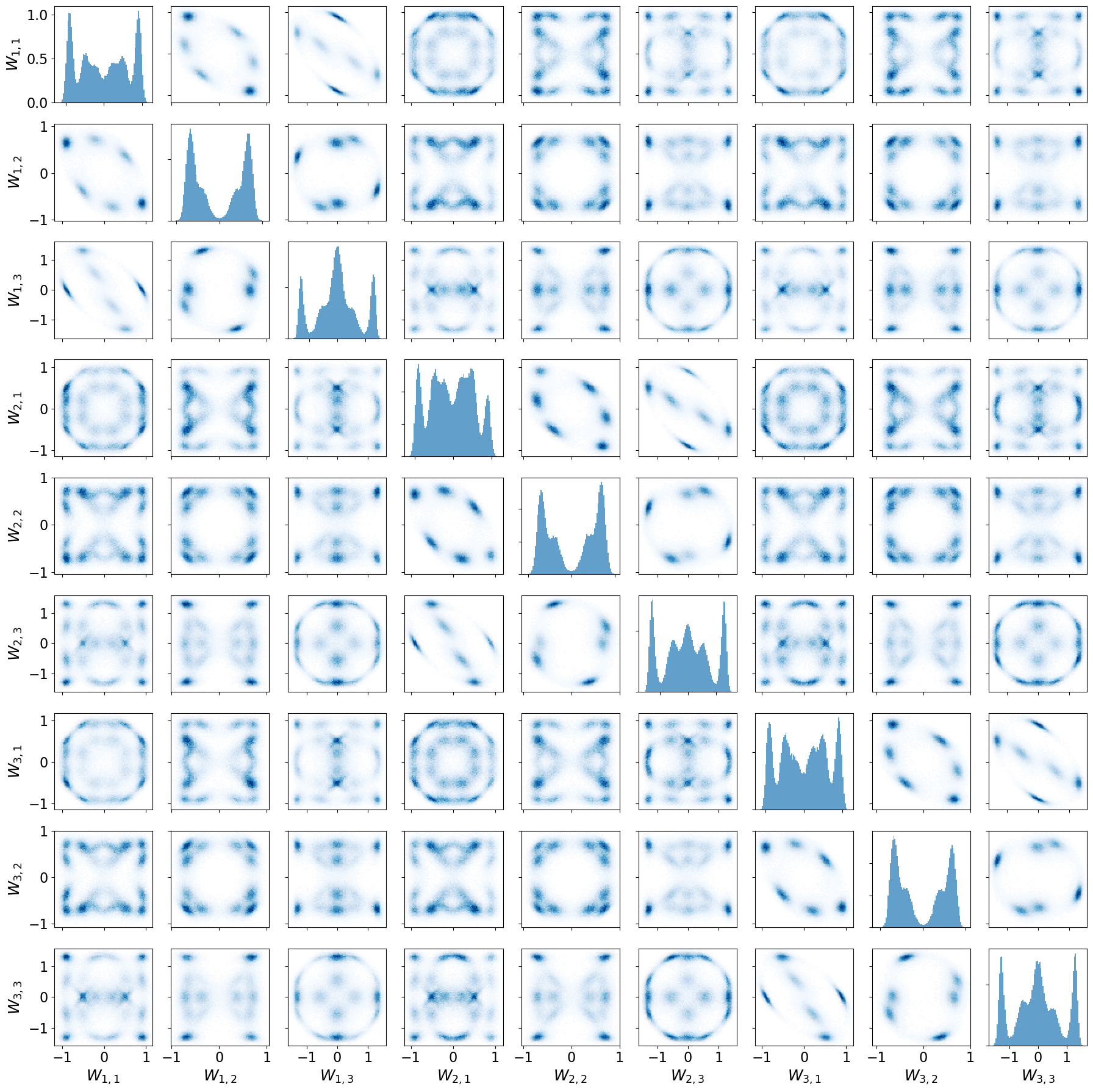}
  \caption[ICA, posterior distribution]{Posterior visualization of the ICA problem (9-dimensional),
    using reference MCMC samples.
  Notice the strong symmetry and presence of multiple modes.}
  \label{fig:ICA_density}
\end{figure}

The Fisher information matrix $\mathcal{F}$, estimated offline for this example, exhibits a $9\times 9$ block-diagonal structure with strong intra-block correlations and weak inter-block coupling, reflecting the three independent source signals in the ICA model. See \Cref{app:numerical:ica} for more details.

\paragraph{Experiment 1: Trajectory visualization.}
\Cref{fig:ICA_marginal_trajectory} shows estimated marginal distributions and chain trajectories along the first coordinate $W_{11}$.
\texttt{unperturbed} stays trapped in a subset of the modes; \texttt{rand-S/M/L} switch frequently between nearby modes but still exhibit visible bias; and \texttt{spec} leads to blowing-up chains. \texttt{spec-E} better captures the global symmetry and promotes more balanced transitions among the modes, yielding a closer approximation to the target distribution.
These qualitative results show, again, that irreversibility alone does not guarantee global exploration; the perturbation must also be chosen to avoid discretization-induced bias.

\paragraph{Experiment 2: MSE versus iteration.}
\Cref{fig:ICA-statistics} shows the evolution of MSE, squared bias, and variance of estimators of various posterior expectations, for increasing MCMC simulation time $T$ (and fixed step size $h$).
Across the statistics considered, \texttt{spec-E} is consistently among the best-performing methods, achieving fast convergence and low MSE.
This quantitative comparison reinforces the qualitative findings above.

\begin{figure}[htbp]
\centering
    \centering
    \includegraphics[width=\textwidth]{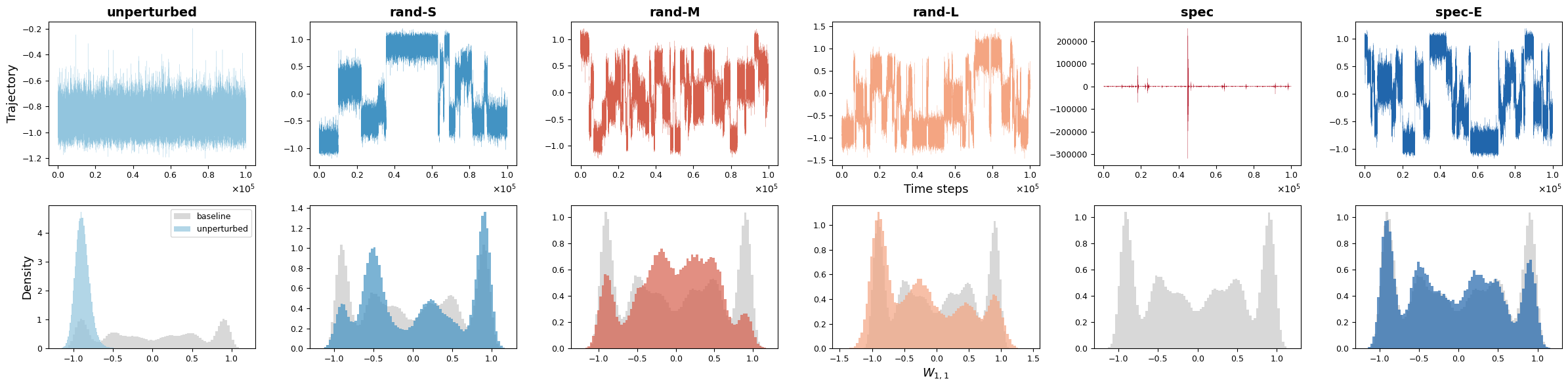}
\caption[ICA, marginal distribution and trajectory plots]{ICA: marginal distributions and first-coordinate trajectories. \texttt{spec-E} visits all modes and best matches the true distribution.}
    \label{fig:ICA_marginal_trajectory}
\end{figure}

\begin{figure}
    \centering
    \includegraphics[width=\linewidth]{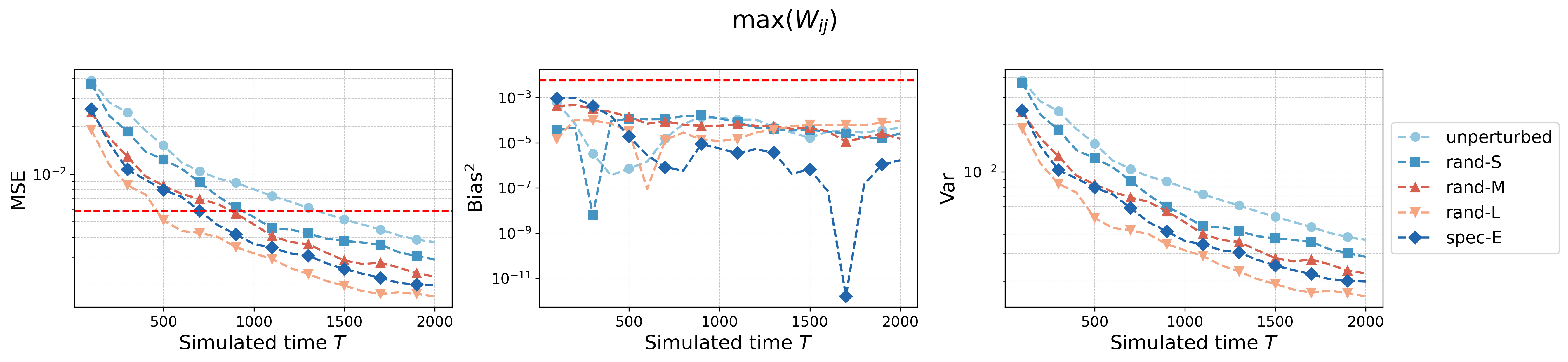}
    \includegraphics[width=\linewidth]{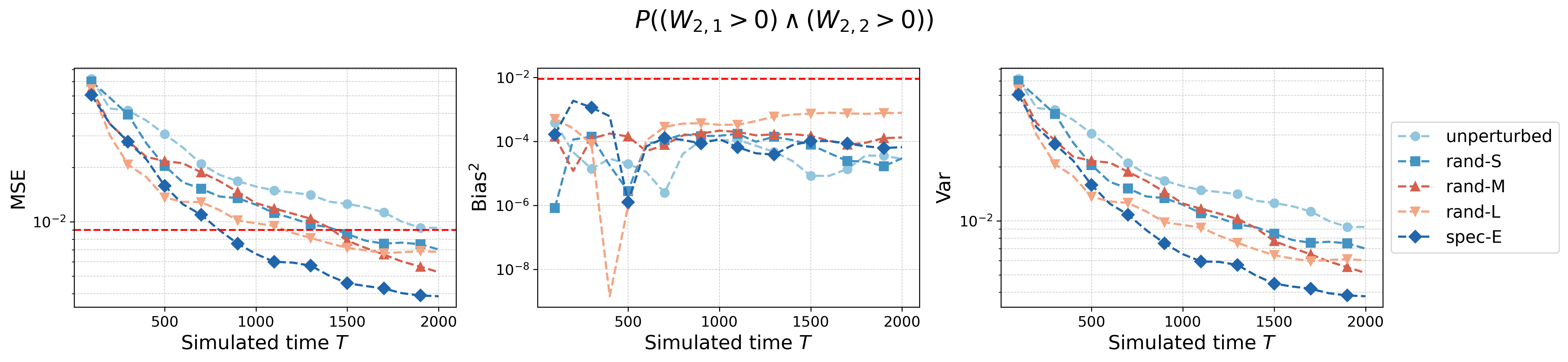}
    \includegraphics[width=\linewidth]{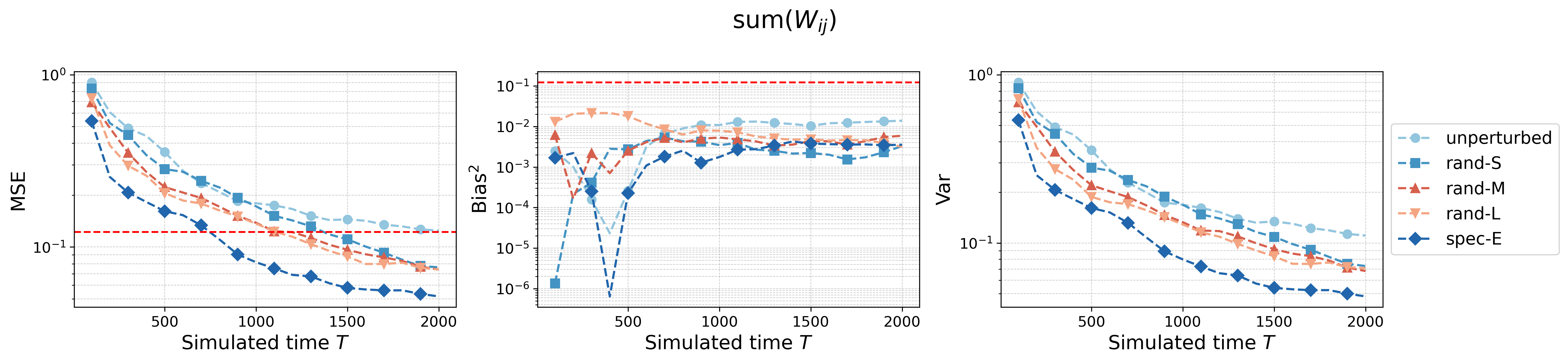}
    \caption[ICA, MSE v.s. time]{ICA: MSE versus simulated time. \texttt{spec} omitted due to instability. \texttt{spec-E} converges fastest; random perturbations lag despite similar $\|J\|_F$.}
    \label{fig:ICA-statistics}
\end{figure}

\section{Conclusion}\label{sec:conclusion}
We have presented a systematic method for constructing irreversible perturbations of time-discretized Langevin dynamics, designed to accelerate mixing while controlling numerical error. The perturbations we propose are solutions to a constrained optimization problem, wherein we minimize a weighted expected squared jump distance (with weight equal to the global Fisher information matrix) subject to a spectral-gap constraint motivated by the Gaussian setting. We show how to construct explicit solutions to this problem.
A key insight is that spectral optimality alone is insufficient for good performance of ULA: spectrally optimal perturbations that do not account for discretization can amplify bias and often lead to divergence at even moderate step sizes. By jointly optimizing for mixing speed and numerical stability, our method achieves substantially lower mean squared error than alternative irreversible perturbations, across diverse target distributions.

Several directions merit further investigation. First, the irreversible perturbations $J$ developed here are state-independent. Extending to \textit{state-dependent} perturbations $J(x)$ opens up numerous possibilities: adapting to local geometry, controlling higher-order discretization errors, and leveraging structure-preserving integrators.
Second, though the present paper is focused solely on the design of irreversible perturbations, it is natural to consider combining these perturbations with reversible preconditioning. These different classes of perturbations may have complementary strengths; indeed, our preliminary experiments in \Cref{app:robustness-analysis} suggest that irreversible perturbations can be more robust than reversible preconditioning when local and global geometry diverge. A complete theoretical analysis is beyond the scope of this work, but would be interesting in the future.
Third, our analysis has focused on overdamped Langevin dynamics; lifting to the underdamped setting could exploit hypocoercivity through skew-symmetric position-momentum couplings for further mixing improvements.
Finally, interacting particle systems offer another opportunity for the design of more effective algorithms. Ensemble-informed irreversible perturbations, e.g., using ensemble estimates of the Fisher information, could enhance exploration of complex multi-modal landscapes.

\appendix

\section{Reversible perturbations}\label{app:reversible}
Reversible perturbations precondition the Langevin dynamics by introducing a symmetric positive-definite matrix $A(x) \succ 0$ that rescales and reshapes the state space. The preconditioned overdamped Langevin diffusion is
\begin{equation}\label{eq:preconditioned-LD}
    \mathrm{d}X_t = \big[-A(X_t)\nabla V(X_t) + \nabla \cdot A(X_t)\big]\mathrm{d}t + \sqrt{2A(X_t)}\,\mathrm{d}W_t,
\end{equation}
where the divergence correction $\nabla \cdot A(X_t)$ ensures that $\pi \propto e^{-V}$ remains the invariant measure. The diffusion coefficient $\sqrt{2A(X_t)}$ denotes any matrix square root satisfying $\sqrt{2A}\,\sqrt{2A}^\top = 2A$. When $A(x) = I$, equation~\eqref{eq:preconditioned-LD} reduces to the standard overdamped Langevin dynamics~\eqref{eq:overdamped_LD}. This framework is also known as Riemannian manifold Langevin dynamics (RMLD) \citep{girolami2011riemann}.

For a \emph{constant} preconditioner $A(x) \equiv A$, the divergence term vanishes and the dynamics simplify to
\begin{equation}\label{eq:const-precond-LD}
    \mathrm{d}X_t = -A\nabla V(X_t)\,\mathrm{d}t + \sqrt{2A}\,\mathrm{d}W_t.
\end{equation}
The corresponding ULA discretization is
\begin{equation*}
    X_{t+1} = X_t + hA\nabla\log\pi(X_t) + \sqrt{2hA}\,Z_t, \qquad Z_t \sim \mathcal{N}(0,I).
\end{equation*}
The preconditioner $A$ reshapes the effective geometry of the target: choosing $A$ to align with the local curvature of $V$ can significantly accelerate convergence. \citet{Ti2023} showed that the ESJD-optimal constant preconditioner is $A^* \propto \mathcal{F}^{-1}$, where
\begin{equation*}
    \mathcal{F} = \mathbb{E}_\pi\!\left[\nabla\log\pi(X)\,\nabla\log\pi(X)^\top\right]
\end{equation*}
is the Fisher information matrix, which captures the average local curvature of $\log\pi$. For a Gaussian target with precision $S$, the Fisher information matrix recovers $\mathcal{F} = S$ exactly.

Our approach extends this ESJD-based framework from reversible to irreversible perturbations. While reversible preconditioning rescales the geometry, irreversible perturbations introduce rotational dynamics that break detailed balance and induce probability currents. The two strategies are complementary, and both benefit from Fisher-informed design.

\section{Derivations}\label{app:derivations}

\subsection{Mean and covariance of Gaussian ULA}\label{app:gaussian-example}
Consider the ULA iteration for a Gaussian target with mean $\mu$ and covariance $S^{-1}$:
\begin{equation*}
    X_t = X_{t-1} - h(I+J)S(X_{t-1} - \mu) + \sqrt{2h}\,Z_{t-1}, \qquad Z_{t-1} \sim \mathcal{N}(0, I).
\end{equation*}
Define $A :=(I+J)S$ and $B := I - h A$.
Subtracting $\mu$ and unrolling the recursion gives
\begin{equation*}
    X_t - \mu = \big( I - h (I+J) S \big)^t (X_0 - \mu)
+ \sqrt{2h} \sum_{i=0}^{t-1} \big( I - h (I+J) S \big)^{t-1-i} \, Z_i.
\end{equation*}

\paragraph{Mean.}
For Gaussian initialization $X_0 =\mathcal N(\mu_0,\tau^2 I)$, taking expectations yields:
\begin{equation*}
    \mathbb{E}[X_t]  - \mu = B^t (\mu_0 - \mu).
\end{equation*}
The mean converges geometrically to $\mu$ at a rate determined by the spectral radius $\rho(B)$, whenever the initial mean $\mu_0$ differs from $\mu$.
For small \(h\), \(\rho(B)\) is governed to first order by the real parts of the eigenvalues of \(A=(I+J)S\), matching the continuous-time spectral-gap criterion.

\paragraph{Covariance.}
\begin{align*}
    \Sigma_t &= \E[(X_t-\mu_t)(X_t-\mu_t)^\top]\\
    &=\E\left[\Bigl(B^t(X_0-\mu)+\sqrt{2h}\sum_{i=0}^{t-1}B^{t-1-i}\xi_i-(\mu_t-\mu)\Bigr)\Bigl(\cdots\Bigr)^\top\right]\\
    &=\E\left[\Bigl(B^t(X_0-\mu_0)+\sqrt{2h}\sum_{i=0}^{t-1}B^{t-1-i}\xi_i\Bigr) \Bigl(\cdots \Bigr)^\top\right]\\
    &=\tau^2B^t(B^\top)^t + 2h\sum_{i=0}^{t-1}B^i(B^\top)^i.
\end{align*}
The covariance satisfies the recursion $\Sigma_{t+1} = B \Sigma_t B^\top + 2h I$.
As $t\rightarrow\infty$, if $\rho(B)< 1$, then $\lim_{t\rightarrow\infty}\Sigma_t = \Sigma_h$, which satisfies the discrete Lyapunov equation
\begin{equation}\label{eq:discrete-lyapunov}
    \Sigma_h = B \Sigma_h B^\top + 2h I.
\end{equation}

\paragraph{Expansion in $h$.}
Substituting $B = I-h A$ into \eqref{eq:discrete-lyapunov} yields
\begin{equation*}
    \Sigma_h = \Sigma_h - h(A\Sigma_h + \Sigma_h A^\top) + h^2 A\Sigma_h A^\top + 2h I.
\end{equation*}
Rearranging and dividing by $h$:
\begin{equation*}
    A\Sigma_h + \Sigma_h A^\top = 2I + h A\Sigma_h A^\top.
\end{equation*}
Expanding $\Sigma_h = \Sigma^{(0)}+h\Sigma^{(1)}+\mathcal O(h^2)$ and
matching powers of $h$,
\begin{align*}
     \mathcal O(1):
     &\quad A\Sigma^{(0)} + \Sigma^{(0)}A^\top = 2I, \\
     \mathcal O(h): &\quad A\Sigma^{(1)} + \Sigma^{(1)}A^\top = A\Sigma^{(0)}A^\top.
\end{align*}
For the $\mathcal O(1)$ equation, one can verify $\Sigma^{(0)} = S^{-1}$ is a valid (and unique) solution.
For the $\mathcal O(h)$ equation, substituting $\Sigma^{(0)} = S^{-1}$,
\begin{align*}
    A\Sigma^{(1)} + \Sigma^{(1)}A^\top &= AS^{-1}A^\top = (I+J)S(I+J)^\top =: \Delta,\\
    \Sigma^{(1)} &= \int_0^\infty e^{-tA} \Delta \, e^{-tA^\top} \, dt.
\end{align*}
In summary, $\Sigma_h = S^{-1} + h\Sigma^{(1)} + O(h^2)$,
where the leading discretization error $\Sigma^{(1)}$ is driven by $\Delta = (I+J)S(I+J)^\top$.
Taking the trace:
$$\mathrm{Tr}(\Delta) = \mathrm{Tr}(S) + \mathrm{Tr}(JSJ^\top),$$
where $\mathrm{Tr}(JSJ^\top)$ is the $J$-dependent term.
This motivates minimizing $\mathcal E_1(J)=\mathrm{Tr}(JSJ^\top)$ or related quantities to control discretization bias.

\subsection{Discretization error in It\^o-Taylor expansion}\label{app:ito-taylor}
In this section we derive the local truncation error of Euler--Maruyama scheme using the It\^o--Taylor expansion, providing an alternative perspective on discretization bias.
\paragraph{General It\^o--Taylor expansion}
Consider the SDE
\begin{equation*}
    \mathrm{d}X_t = b(X_t) \, \mathrm{d}t + \sigma(X_t) \, \mathrm{d}W_t, \qquad X_0 \sim \pi_0.
\end{equation*}
For a smooth test function $f$, define
\begin{equation*}
    \mathcal{L}_0 f(x) = b(x) \cdot \nabla f(x) + \frac{1}{2} \sigma(x)\sigma(x)^\top : \Delta f(x),
    \qquad
    \mathcal{L}_1 f(x)= \sigma(x) \cdot \nabla f(x).
\end{equation*}
For a fixed time $t$ and step size $h>0$, let $\Delta W_t:=W_{t+h}-W_t$. Applying It\^o's lemma to the identity function $f(x) = x$ over the interval $[t,t+h]$ gives
\begin{align}\label{eq:ito_formula}
    X_{t+h}
    &= X_t + b(X_t)h+\sigma(X_t)\Delta W_t+R_1,\\
    R_1
    &= \int_0^h \int_0^s \mathcal{L}_0 b(X_{t+z})\,\mathrm{d}z\,\mathrm{d}s
    + \int_0^h \int_0^s \mathcal{L}_1 b(X_{t+z})\,\mathrm{d}W_{t+z}\,\mathrm{d}s \notag\\
    &\quad
    + \int_0^h \int_0^s \mathcal{L}_0 \sigma(X_{t+z})\,\mathrm{d}z\,\mathrm{d}W_{t+s}
    + \int_0^h \int_0^s \mathcal{L}_1 \sigma(X_{t+z})\,\mathrm{d}W_{t+z}\,\mathrm{d}W_{t+s}.
\end{align}
The four terms have $L^2$ orders $\mathcal{O}(h^2)$, $\mathcal{O}(h^{3/2})$, $\mathcal{O}(h^{3/2})$, and $\mathcal{O}(h)$, respectively.

\paragraph{Application to Langevin dynamics}
For the Langevin dynamics considered in this paper, $\sigma(x) = \sqrt{2}I$ is constant, so $\mathcal L_0\sigma = 0$ and $\mathcal L_1\sigma = 0$. Consequently, the last two terms vanish, and we obtain
\begin{equation*}
    X_{t+h} = X_t + b(X_t)\,h + \sqrt{2}\Delta W_t + \int_0^h \int_0^s \mathcal{L}_1 b(X_{t+z})\,\mathrm{d}W_{t+z}\,\mathrm{d}s  + O(h^2).
\end{equation*}
The dominant remainder term is $\mathcal{O}(h^{3/2})$.

\paragraph{Linear Langevin dynamics}
For the irreversibly perturbed linear Langevin SDE, we have $b(X)=-(I+J) SX$, $\sigma = \sqrt{2}I$. Therefore,
$\nabla b = -(I+J)S$ and $\mathcal L_1 b(X_0) = -\sqrt{2}(I+J)S$. Since $L_1 b$ is constant, the $\mathcal{O}(h^{3/2})$ term simplifies to
\begin{equation*}
    \int_0^h \int_0^s \mathcal{L}_1 b(X_{t+z})\,\mathrm{d}W_{t+z}\,\mathrm{d}s  = -\sqrt{2}(I+J)S
    \int_0^h (W_{t+s}-W_t)\,\mathrm{d}s =: -\sqrt{2}(I+J)S\,Z_{t,h},
\end{equation*}
where $Z_{t,h}
    :=
    \int_0^h (W_{t+s}-W_t)\,\mathrm{d}s
    \sim
    \mathcal{N}\!\left(0,\frac{h^3}{3}I\right).$

Therefore, the local expansion from time $t$ is
\begin{equation*}
    X_{t+h}
    =
    X_t
    -
    (I+J)SX_t\,h
    +
    \sqrt{2}\Delta W_t
    -
    \sqrt{2}(I+J)S Z_{t,h}
    +
    \mathcal{O}(h^2).
\end{equation*}

\paragraph{Local truncation error}
The Euler--Maruyama scheme over one step is $\widetilde X_{t+h}
    =
    X_t + b(X_t)h + \sqrt{2}\Delta W_t$. For the linear Langevin dynamics, the leading local truncation error is therefore
\begin{equation*}
    X_{t+h}-\widetilde X_{t+h}
    =
    -\sqrt{2}(I+J)S Z_{t,h}
    +
    \mathcal{O}(h^2).
\end{equation*}
Consequently,
\begin{align*}
    \mathbb{E}\!\left[
        \left\|
        \sqrt{2}(I+J)S Z_{t,h}
        \right\|^2
    \right]
    &=
    2\,\mathrm{Tr}\!\left[
        (I+J)S
        \mathbb{E}[Z_{t,h}Z_{t,h}^\top]
        S(I+J)^\top
    \right] \\
    &=
    \frac{2h^3}{3}
    \mathrm{Tr}\!\left[(I+J)S^2(I+J)^\top\right] \\
    &=
    \frac{2h^3}{3}\mathrm{Tr}(S^2)
    +
    \frac{2h^3}{3}\mathrm{Tr}(JS^2J^\top),
\end{align*}
where the cross terms vanish because $S^2$ is symmetric and $J$ is skew-symmetric. The first term is independent of $J$, so the $J$-dependent contribution is
$\mathcal E_3(J) := \mathrm{Tr}(JS^2J^\top) =-\mathrm{Tr}\!\left[ J S S J \right]$.
This motivates minimizing $\mathcal{E}_3(J)$ as a proxy for controlling the leading local discretization error.

\section{Proofs}\label{app:proofs}

\subsection{Proof of Proposition~\ref{prop:proxy-equivalence}}\label{app:proof:prop:proxy-equivalence}
\begin{proof}
We first rewrite each discretization error proxy as a Frobenius norm.
Since $J^\top=-J$,
\[
\mathcal E_1(J)=\text{Tr}(-JSJ)=\text{Tr}(JSJ^\top)=\|J S^{1/2}\|_F^2\geq 0.
\]
Moreover, with $W=S$,
\[
\mathcal E_2(J)= \|S^{1/2} J S^{1/2}\|_F^2\geq 0,\qquad \mathcal E_3(J)=\|J S\|_F^2\geq 0.
\]
Next, set $M:=J S^{1/2}$. Then $M^\top M\succeq 0$ and $\mathcal E_1(J)=\text{Tr}(M^\top M)$. Also,
\[
\mathcal E_3(J)=\|M S^{1/2}\|_F^2
=\text{Tr}\big((MS^{1/2})^\top (MS^{1/2})\big)
=\text{Tr}\big(S^{1/2} M^\top M S^{1/2}\big)
=\text{Tr}\big(M^\top M\,S\big).
\]
Since $M^\top M\succeq 0$ and $S\succeq 0$, the inequalities
$\sigma_{\min}(S)I\preceq S\preceq \sigma_{\max}(S)I$
and trace monotonicity yield
\[
\sigma_{\min}(S)\text{Tr}(M^\top M)\le \text{Tr}(M^\top M S)\le \sigma_{\max}(S)\text{Tr}(M^\top M),
\]
i.e.,
\[
\sigma_{\min}(S)\,\mathcal E_1(J)\le \mathcal E_3(J)\le \sigma_{\max}(S) \,\mathcal E_1(J).
\]
For $\mathcal E_2$, set $N:=S^{1/2}J S^{1/2} = S^{1/2} M$. Then
\[
\mathcal E_2(J)=\|N\|_F^2=\|S^{1/2}M\|_F^2
=\text{Tr}\big((S^{1/2}M)^\top(S^{1/2}M)\big)
=\text{Tr}\big(M^\top S M\big)
=\text{Tr}\big(M M^\top S\big).
\]
Apply the same eigenvalue sandwich with $A:=MM^\top\succeq 0$ and $B:=S$ to obtain
\[
\sigma_{\min}(S)\text{Tr}(MM^\top)\le \text{Tr}(MM^\top S)\le \sigma_{\max}(S)\text{Tr}(MM^\top).
\]
Finally, $\text{Tr}(MM^\top)=\text{Tr}(M^\top M)=\mathcal E_1(J)$, and hence
\[
\sigma_{\min}(S)\,\mathcal E_1(J)\le \mathcal E_2(J)\le \sigma_{\max}(S)\,\mathcal E_1(J).
\]
Combining the two bounds gives the equivalence between $\mathcal E_2$ and $\mathcal E_3$ up to $\kappa(S)=\sigma_{\max}(S)/\sigma_{\min}(S)$.
\end{proof}

\subsection{Proof of Theorem \ref{thm:P1-P2-relation}}\label{proof:thm:P1-P2-relation}
\begin{proof}
Let $J\in\mathcal J_\mathrm{s}$, and set
\[
    \tilde J=S^{1/2}JS^{1/2},
    \qquad
    A=S+\tilde J .
\]
Since $A$ is similar to $(I+J)S$, spectral feasibility gives
$\sigma_j(A)=\gamma_\star+i\omega_j$, where $\gamma_\star=\mathrm{Tr}(S)/d$. Therefore
\(\sum_j \omega_j=0\), since $\mathrm{Tr}(A)=d\gamma_\star$ is real, and
\[
    \mathrm{Tr}(A^2)
    =
    \sum_j (\gamma_\star+i\omega_j)^2
    =
    d\gamma_\star^2-\sum_j\omega_j^2
    \le d\gamma_\star^2 .
\]
On the other hand, since $S$ is symmetric, $\tilde J$ is skew-symmetric,
and $\mathrm{Tr}(S\tilde J)=0$,
\[
    \mathrm{Tr}(A^2)
    =
    \mathrm{Tr}\bigl((S+\tilde J)^2\bigr)
    =
    \|S\|_F^2-\|\tilde J\|_F^2 .
\]
Moreover,
\[
    \|\tilde J\|_F^2
    =
    \mathrm{Tr}(-JSJS).
\]
Thus every feasible point of \eqref{eq:P1} satisfies
\begin{align}\label{Eq:LowerBoundP1}
\mathrm{Tr}(-JSJS)
    \ge
    \|S\|_F^2-d\gamma_\star^2 .
\end{align}

The existence of an orthonormal basis \(\Psi\) with
\(\operatorname{diag}(\Psi^\top S\Psi)=\gamma_\star\mathbf 1\) follows from the
Schur--Horn theorem: the constant vector \((\gamma_\star,\ldots,\gamma_\star)\) is majorized by the
eigenvalues of \(S\).

We next show that the triangular construction attains this bound. By definition,
\[
    B=\Psi^\top S\Psi,
    \qquad
    M_\star=\Psi^\top \tilde J_\star \Psi .
\]
Since \(B+M_\star\) is upper triangular with diagonal entries \(\gamma_\star\),
\[
    \Psi^\top (S+\tilde J_\star)\Psi
    =
    B+M_\star
\]
has all eigenvalues equal to \(\gamma_\star\). Hence \(S+\tilde J_\star\), and therefore
\((I+J_\star)S\), has all eigenvalues with real part \(\gamma_\star\). Thus
\(J_\star\in\mathcal J_\mathrm{s}\).

Furthermore, orthogonal invariance of the Frobenius norm gives
\[
    \mathrm{Tr}(-J_\star S J_\star S)
    =
    \|\tilde J_\star\|_F^2
    =
    \|M_\star\|_F^2 .
\]
Since \(M_\star\) agrees with \(B\) up to signs on the off-diagonal entries and
vanishes on the diagonal,
\[
    \|M_\star\|_F^2
    =
    \sum_{j\neq k} B_{jk}^2
    =
    \|B\|_F^2-\sum_{k=1}^d B_{kk}^2
    =
    \|S\|_F^2-d\gamma_\star^2 .
\]
Thus, comparing with (\ref{Eq:LowerBoundP1}), \(J_\star\) attains the lower bound, and hence
\[
    \min(\mathrm{P1})
    =
    \|S\|_F^2-d\gamma_\star^2.
\]

It remains to relate this value to \eqref{eq:P2}. This follows from the coordinate
change defining \eqref{eq:P2}. Given any feasible \(J\) in \eqref{eq:P1}, set
\(\tilde J=S^{1/2}JS^{1/2}\) and \(M=\Psi^\top\tilde J\Psi\) for any
orthonormal \(\Psi\). Then \(M^\top=-M\), \(B_\Psi+M\) is orthogonally similar
to \(S+\tilde J\), and
\[
    \|M\|_F^2
    =
    \|\tilde J\|_F^2
    =
    \mathrm{Tr}(-JSJS).
\]
Conversely, any feasible pair \((\Psi,M)\) in \eqref{eq:P2} induces
\[
    J=S^{-1/2}\Psi M\Psi^\top S^{-1/2},
\]
which is feasible for \eqref{eq:P1} with the same objective value. Therefore
\[
    \min(\mathrm{P2})
    =
    \min(\mathrm{P1})
    =
    \|S\|_F^2-d\gamma_\star^2.
\]

Finally, we characterize the minimizers of \eqref{eq:P1}. Let \(J\in\mathcal J_\mathrm{s}\)
be a global minimizer. Equality in the lower bound above forces
\[
    \sum_j \omega_j^2=0,
\]
so all eigenvalues of \(A=S+\tilde J\) are equal to \(\gamma_\star\). By the real Schur
decomposition, there exists an orthonormal basis
\(\Psi=[\psi_1,\ldots,\psi_d]\) such that $\Psi^\top A\Psi$
is upper triangular with diagonal entries \(\gamma_\star\). Write
\[
    B=\Psi^\top S\Psi,
    \qquad
    M=\Psi^\top \tilde J\Psi .
\]
Then \(B\) is symmetric, \(M\) is skew-symmetric, and \(B+M\) is upper triangular.
In particular, \(B_{kk}=\gamma_\star\) for each \(k\), and for \(j>k\),
\[
    0=(B+M)_{jk}=B_{jk}+M_{jk}.
\]
Thus
\[
    M_{jk}
    =
    \begin{cases}
        B_{jk}, & j<k,\\
        0, & j=k,\\
        -B_{jk}, & j>k,
    \end{cases}
\]
which is precisely the triangular construction in the statement. Conversely,
the construction above shows that every such triangular form is feasible for
\eqref{eq:P1} and attains the minimum.
\end{proof}

\subsection{Spectral stability under small perturbations}\label{app:prop:stability:spectrum}
The following proposition shows that the spectral gap is stable under target perturbations with small Hessian. Although it is not used directly in the main paper, it provides a useful construction and key intuition for the proof of \Cref{prop:stability:spectrum} in the next subsection.

\begin{prop}[Spectral stability under small perturbations]\label{prop:stability:spectrum}
Let $S \succ 0$ and let $V(x) = \frac{1}{2}x^\top Sx + r(x)$ with $r \in C^3(\mathbb{R}^d)$ satisfying
\[
\varepsilon\;:=\;\sup_{x\in\mathbb R^d}\bigl\|\nabla^2 r(x)\bigr\|_2\;<\;\min\{\sigma_{\min}(S), 1\}.
\]
Here $\sigma_{\min}(S)$ denotes the smallest eigenvalue of $S$.
This assumption makes \(V\) strongly convex, so \(-\mathcal L_\varepsilon\) has discrete spectrum.
Let $\mathcal{L}_\varepsilon = \Delta - \nabla V \cdot \nabla$ be the Langevin generator with invariant measure $\pi \propto e^{-V}$, and let $0 = \sigma_0(\varepsilon) < \sigma_1(\varepsilon) \leq \sigma_2(\varepsilon) \leq \cdots$ denote the eigenvalues of $-\mathcal{L}_\varepsilon$.
For the Gaussian case $r \equiv 0$, the spectrum of $-\mathcal{L}_0$ is $\{\sum_{i=1}^d n_i \sigma_i(S) : n_i \in \mathbb{N}_0\}$, where $\sigma_1(S), \ldots, \sigma_d(S)$ are the eigenvalues of $S$. In particular, the spectral gap is $\gamma(0)=\sigma_{\min}(S)$.

Then there exists $C = C(S, d) > 0$ such that for all $k \geq 0$,
\[
\bigl|\sigma_k(\varepsilon)-\sigma_k(0)\bigr|
\;\le\; C\,\varepsilon\bigl(1+\sigma_k(0)\bigr).
\]
In particular, the spectral gap $\gamma(\varepsilon):=\sigma_1(\varepsilon)$ satisfies
\[
\sigma_{\min}(S)-\varepsilon
\;\le\; \gamma(\varepsilon)
\;\le\; \sigma_{\min}(S)+C\,\varepsilon.
\]
\end{prop}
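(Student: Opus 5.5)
We will prove Proposition~\ref{prop:stability:spectrum} by passing to the Schrödinger picture and comparing quadratic forms (min--max), since the generator is reversible and hence self-adjoint in $L^2(\pi)$.

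\textbf{Step 1: reduction to Schrödinger operators.} Conjugating by $e^{-V/2}$ maps $-\mathcal L_\varepsilon$ on $L^2(\pi)$ unitarily to
\[
H_\varepsilon=-\Delta+U_\varepsilon,\qquad U_\varepsilon=\tfrac14|\nabla V|^2-\tfrac12\Delta V.
\]
The spectra coincide. For $r\equiv 0$, $H_0$ is a harmonic oscillator with eigenvalues $\sum_i n_i\sigma_i(S)$, which confirms the stated Gaussian spectrum.

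\textbf{Step 2: the lower bound on the gap.} This does not need Step 1. Since $\nabla^2V\succeq (\sigma_{\min}(S)-\varepsilon)I\succ0$, the Bakry--Émery criterion (equivalently, Brascamp--Lieb) gives a Poincaré inequality with constant $\sigma_{\min}(S)-\varepsilon$. Hence $\gamma(\varepsilon)\ge\sigma_{\min}(S)-\varepsilon$.

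\textbf{Step 3: relative form bound.} Write $U_\varepsilon-U_0=\tfrac12 Sx\cdot\nabla r+\tfrac14|\nabla r|^2-\tfrac12\Delta r$. Because $\nabla r(0)=0$, we have $|\nabla r(x)|\le\varepsilon|x|$ and $|\Delta r|\le d\varepsilon$. It follows that
\[
|U_\varepsilon-U_0|\le \varepsilon\bigl(\tfrac12\|S\||x|^2+\tfrac14\varepsilon|x|^2+\tfrac d2\bigr).
\]
Since $U_0=\tfrac14 x^\top S^2x-\tfrac12\mathrm{Tr}S$, we get $|x|^2\le 4\sigma_{\min}(S)^{-2}(U_0+\tfrac12\mathrm{Tr}S)$. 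Using $\varepsilon<1$, this yields
\[
|U_\varepsilon-U_0|\le C\varepsilon\,(H_0+1)
\]
in the sense of quadratic forms, because $U_0\le H_0$ as forms. Therefore
\[
(1-C\varepsilon)H_0-C\varepsilon\;\le\; H_\varepsilon\;\le\;(1+C\varepsilon)H_0+C\varepsilon .
\]
The Courant--Fischer min--max principle then gives $|\sigma_k(\varepsilon)-\sigma_k(0)|\le C\varepsilon(1+\sigma_k(0))$ for all $k$. The case $k=0$ is trivial, since both eigenvalues are $0$. Taking $k=1$ gives the upper bound $\gamma(\varepsilon)\le\sigma_{\min}(S)+C\varepsilon(1+\sigma_{\min}(S))$, and the constant $1+\sigma_{\min}(S)$ can be absorbed into $C$.

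\textbf{Main obstacle.} The delicate point is Step 3. The perturbation of the potential is unbounded, growing like $\varepsilon|x|^2$, so ordinary bounded perturbation theory does not apply. The essential ingredient is the relative bound by $H_0$, which relies on $U_0$ being coercive with quadratic growth. We must also verify that min--max is applicable: $H_\varepsilon$ has compact resolvent because $U_\varepsilon\gtrsim|x|^2$ at infinity, and the form domains of $H_\varepsilon$ and $H_0$ coincide by the two-sided form bound. The assumption $r\in C^3$ ensures that $\Delta r$ is regular enough for these form manipulations.
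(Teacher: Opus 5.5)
Your route is the paper's own: the ground-state transform to $-\Delta+\tfrac14|\nabla V|^2-\tfrac12\Delta V$, the pointwise bound $|U_\varepsilon-U_0|\le C\varepsilon(1+|x|^2)$, and control of $\int|x|^2|u|^2$ by the harmonic-oscillator form via $x^\top S^2x\ge\sigma_{\min}(S)^2|x|^2$. It then uses Courant--Fischer for every $k$ and Bakry--\'Emery for the lower bound on the gap.

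One step fails as written. In Step~3 you use $\nabla r(0)=0$, but this is not a hypothesis of the proposition, and without it the key estimate is false. Take $r(x)=c^\top x$ with $c\neq0$: then $\varepsilon=0$, yet $U_\varepsilon-U_0=\tfrac12 c^\top Sx+\tfrac14|c|^2\neq0$. So no bound of the form $|U_\varepsilon-U_0|\le C(S,d)\,\varepsilon\,(H_0+1)$ can hold.

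The missing ingredient is the recentering the paper performs before decomposing. Since $\varepsilon<\sigma_{\min}(S)$, the map $x\mapsto-S^{-1}\nabla r(x)$ is a contraction (equivalently, $V$ is strongly convex). Hence there is a unique $x_*$ with $Sx_*+\nabla r(x_*)=0$. Then $V(y+x_*)=\tfrac12y^\top Sy+\hat r(y)+\mathrm{const}$ with $\hat r(y)=r(y+x_*)+y^\top Sx_*$. This $\hat r$ satisfies $\nabla\hat r(0)=0$ and $\sup_y\|\nabla^2\hat r(y)\|_2=\varepsilon$, and translation leaves the spectrum of $-\mathcal L_\varepsilon$ unchanged.

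Insert this reduction at the start of Step~3 and your argument is complete. Also add the small remark that when $1-C\varepsilon\le0$ you cannot apply min--max to $(1-C\varepsilon)H_0-C\varepsilon$. In that case the claimed lower bound is vacuous anyway, because $\sigma_k(\varepsilon)\ge0$.
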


\begin{proof}[Proof]
We begin by recalling that
\begin{equation*}
    \mathcal L_\varepsilon f = \Delta f-\nabla V\cdot \nabla f \;\;\text{ on }\;\;L^2(\pi), \qquad \pi\propto e^{-V(x)}dx,
\end{equation*}
Using the ground-state transform $(Uf)(x) = e^{-V(x)/2}f(x)$, which maps $L^2(\pi)$ unitarily onto $L^2(dx)$ up to the normalizing constant, we define the conjugated operator
\begin{equation*}
    \tilde{L}_\varepsilon := -U\mathcal L_\varepsilon U^{-1} = - e^{-V/2}\mathcal L_\varepsilon e^{V/2} = -\Delta+\frac{1}{4}\|\nabla V\|_2^2-\frac{1}{2}\Delta V.
\end{equation*}
Thus $-\mathcal L_\varepsilon$ and $\tilde{L}_\varepsilon$ have identical spectra.

\paragraph{Normalization and decomposition.}
Write $\nabla V = Sx+R(x)$, where $R(x) = \nabla r(x)$ and $\sup_x\|\nabla^2 r(x)\|_2\leq \varepsilon$ by assumption. Without loss of generality, we may assume $\nabla r(0) = 0$. Indeed, let $x_*$ be the unique solution to $Sx_* + \nabla r(x_*) = 0$, which exists for $\varepsilon < \sigma_{\min}(S)$ by the contraction mapping theorem: the map $x \mapsto -S^{-1}\nabla r(x)$ has Lipschitz constant $\varepsilon/\sigma_{\min}(S) < 1$. The change of variables $y = x - x_*$ gives $V(y + x_*) = \frac{1}{2}y^\top Sy + \hat{r}(y) + \mathrm{const}$, where $\hat{r}(y) = y^\top Sx_* + r(y + x_*)$ satisfies $\nabla\hat{r}(0) = Sx_* + \nabla r(x_*) = 0$ and $\|\nabla^2\hat{r}\|_\infty = \|\nabla^2 r\|_\infty \leq \varepsilon$. Since the spectrum of $-\mathcal{L}_\varepsilon$ is invariant under translation and additive constants, we may replace $r$ by $\hat{r}$. With this normalization, $\|R(x)\|_2 \leq \varepsilon\|x\|_2$ by the mean value theorem.

Decompose $\tilde{L}_\varepsilon = \tilde{L}_0 + W_\varepsilon$, where
\begin{align*}
    \tilde{L}_0 &= -\Delta+\frac{1}{4}\|Sx\|_2^2-\frac{1}{2}\text{Tr}(S),\qquad W_\varepsilon  = \frac{1}{2}x^\top SR(x)+\frac{1}{4}\|R(x)\|^2_2-\frac{1}{2}\text{Tr}\nabla^2 r(x)
\end{align*}
where $W_\varepsilon$ collects all perturbation terms.

\paragraph{Perturbation bound.}
We next derive a growth bound on the perturbation. Since $\|R(x)\|_2\leq \varepsilon\|x\|_2$,
\begin{equation*}
    |x^\top SR(x)|\leq\|S\|_2\|x\|_2\|R(x)\|_2\leq\|S\|_2\varepsilon\|x\|^2_2,\quad \|R(x)\|^2_2\leq \varepsilon^2\|x\|_2^2,\quad |\text{Tr}\nabla^2 r(x)|\leq d\varepsilon.
\end{equation*}
Hence for $\varepsilon\leq 1$,
\begin{equation}\label{eq:upper-bound-W-eps}
    |W_\varepsilon(x)| \leq \left(\frac{\|S\|}{2} + \frac{1}{4}\right) \varepsilon \|x\|^2 + \frac{d}{2}\varepsilon \leq C \varepsilon (1 + \|x\|^2), \qquad C = C(S,d).
\end{equation}
To control the effect of $W_\varepsilon$, we compare it to the quadratic form of $\tilde{L}_0$. Since $\tilde{L}_0 = -\Delta + \frac{1}{4}\|Sx\|_2^2 - \frac{1}{2}\mathrm{Tr}(S)$, integration by parts gives, for $u$ in the form domain,
\[
\langle u, \tilde{L}_0 u \rangle = \int \|\nabla u\|_2^2 \, dx + \frac{1}{4}\int x^\top S^2 x \, |u|^2 \, dx - \frac{1}{2}\mathrm{Tr}(S)\|u\|^2 = \mathcal{E}_0[u] - \tfrac{1}{2}\mathrm{Tr}(S)\|u\|^2,
\]
where we define the reference energy
\begin{equation*}
    \mathcal E_0[u] := \int\left(\|\nabla u\|^2_2+\frac{1}{4}x^\top S^2 x\,|u|^2\right)\d x.
\end{equation*}
Since $S \succeq \sigma_{\min}(S) I$, we have $x^\top S^2 x \geq \sigma_{\min}(S)^2 \|x\|^2$. Dropping the nonnegative gradient term from $\mathcal{E}_0[u]$ and using this bound gives
\begin{equation}\label{eq:form-bound-h0}
    \int \|x\|^2 |u|^2 \, dx \leq \frac{1}{\frac{1}{4}\sigma_{\min}(S)^2}\int \frac{1}{4}x^\top S^2 x\, |u|^2\,dx \leq \frac{4}{\sigma_{\min}(S)^2} \mathcal{E}_0[u].
\end{equation}
Combining \eqref{eq:upper-bound-W-eps} and \eqref{eq:form-bound-h0}, we obtain the key energy estimate:
\begin{align*}
    |\langle u, W_\varepsilon u \rangle|
    &\leq C \varepsilon \int (1 + \|x\|^2) |u|^2 \, dx \\
    &= C \varepsilon \left( \|u\|^2 + \int \|x\|^2 |u|^2 \, dx \right) \\
    &\leq C \varepsilon \left( \|u\|^2 + \frac{4}{\sigma_{\min}(S)^2} \mathcal{E}_0[u] \right) \\
    &= a\ \varepsilon \, \mathcal{E}_0[u] + b\ \varepsilon \, \|u\|^2,
\end{align*}
for all finite energy $u$. Thus $W_\varepsilon$ is small of order $\mathcal O(\varepsilon)$ in the energy sense.

Since $\varepsilon<\sigma_{\min}(S)$ and $R(0)=0$, we have $\|\nabla V(x)\|_2\geq(\sigma_{\min}(S)-\varepsilon)\|x\|_2$ and $|\Delta V(x)|\leq \mathrm{Tr}(S)+d\varepsilon$. Thus the Schr\"odinger potential $\frac14\|\nabla V\|_2^2-\frac12\Delta V$ is confining and comparable to $1+\|x\|^2$. Hence the Friedrichs realization of $\tilde L_\varepsilon$ is self-adjoint with compact resolvent and the same quadratic-form domain as $\tilde L_0$.

\paragraph{Eigenvalue bounds.}
Denote the eigenvalues of $\tilde{L}_{\bullet}$ as $\sigma_{k}(\bullet)$.
Since $\langle u, \tilde{L}_\varepsilon u\rangle = \langle u, \tilde{L}_0 u\rangle + \langle u, W_\varepsilon u\rangle$ and $|\langle u, W_\varepsilon u\rangle| \leq a\varepsilon\,\mathcal{E}_0[u] + b\varepsilon\|u\|^2$, we need to relate $\mathcal{E}_0[u]$ to $\langle u, \tilde{L}_0 u\rangle$. From the identity $\langle u, \tilde{L}_0 u\rangle = \mathcal{E}_0[u] - \frac{1}{2}\mathrm{Tr}(S)\|u\|^2$ established above, we have $\mathcal{E}_0[u] = \langle u, \tilde{L}_0 u\rangle + \frac{1}{2}\mathrm{Tr}(S)\|u\|^2$. Substituting into the form bound and absorbing the $\|u\|^2$ terms (with updated constants $a, b$) gives
\begin{align*}
    -a\varepsilon \langle u, \tilde{L}_0 u\rangle - b\varepsilon\|u\|^2 \leq \langle u, W_\varepsilon u\rangle \leq a\varepsilon \langle u, \tilde{L}_0 u\rangle + b\varepsilon\|u\|^2,
\end{align*}
where we used $\langle u, \tilde{L}_0 u\rangle \geq 0$ (since $\tilde{L}_0$ is nonnegative). Adding $\langle u, \tilde{L}_0 u\rangle$ throughout:
\begin{align*}
    (1-a\varepsilon) \langle u, \tilde{L}_0 u\rangle - b\varepsilon\|u\|^2&\leq \langle u, \tilde{L}_\varepsilon u\rangle\leq (1+a\varepsilon)\langle u, \tilde{L}_0 u\rangle + b\varepsilon\|u\|^2.
\end{align*}
By the Courant--Fischer min--max principle \citep[Thm.~XIII.1]{reed1978iv}, the upper form bound gives $\sigma_k(\varepsilon)\leq (1+a\varepsilon)\sigma_k(0)+b\varepsilon$. For the lower bound, if $1-a\varepsilon\geq 0$, min--max gives $\sigma_k(\varepsilon)\geq (1-a\varepsilon)\sigma_k(0)-b\varepsilon$; if $1-a\varepsilon<0$, the right-hand side is nonpositive and the bound is trivial since $\sigma_k(\varepsilon)\geq0$. Hence
\begin{equation*}
    (1-a\varepsilon)\sigma_k(0)-b\varepsilon\leq \sigma_k(\varepsilon)\leq (1+a\varepsilon)\sigma_k(0)+b\varepsilon.
\end{equation*}
Rearranging,
\begin{equation*}
    |\sigma_k(\varepsilon)-\sigma_k(0)|\leq a\varepsilon\sigma_k(0)+b\varepsilon\leq C\varepsilon(1+\sigma_k(0)),\qquad C = \max\{a,b\}.
\end{equation*}
In particular, since $\sigma_0(0) = \sigma_0(\varepsilon) = 0$,  $\sigma_1(0) = \sigma_{\min}(S)$, $\sigma_1(\varepsilon)
\;\le\; \sigma_{\min}(S)+C\,\varepsilon$.

A complementary lower bound comes from convexity. Weyl's inequality for symmetric matrices shows $\sigma_{\min} (S+\nabla^2 r(x))\geq \sigma_{\min} (S)+\sigma_{\min} (\nabla^2 r(x))\geq \sigma_{\min} (S)-\|\nabla^2 r(x)\|\geq \sigma_{\min} (S)-\varepsilon$. By the Bakry--\'Emery curvature criterion \citep[Prop.~4.8.1]{bakry2013analysis}, this implies $\gamma(\varepsilon)=\sigma_1(\varepsilon)\geq \inf_x\sigma_{\min} (\nabla^2 V(x))\geq \sigma_{\min} (S)-\varepsilon$.

Combining the upper and lower bounds shows that the spectral gap of $-\mathcal L_\varepsilon$ differs from that of the Gaussian reference by at most $\mathcal O(\varepsilon)$, completing the proof.
\end{proof}

\subsection{Preparatory results for Theorem~\ref{thm:spectral-gap-stability}}
We begin by introducing some useful notation and preliminary results.
Under the hypotheses of Theorem~\ref{thm:spectral-gap-stability},
\[
    V(x)=\frac{1}{2}x^\top S x+r_S(x),
    \qquad
    \nabla r_S(0)=0,
    \qquad
    \delta_S=\sup_x\|\nabla^2 r_S(x)\|_2 ,
\]
the Fisher information matrix satisfies
\[
    \mathcal F_\pi
    =\mathbb E_\pi[\nabla^2V(X)]
    =S+\mathbb E_\pi[\nabla^2r_S(X)],
    \qquad
    \|\mathcal F_\pi-S\|_2\leq \delta_S .
\]
Rewrite the same potential around the $\mathcal F_\pi$-induced quadratic reference:
\[
    V(x)=\frac12x^\top\mathcal F_\pi x+\widetilde r(x),
    \qquad
    \widetilde r(x):=r_S(x)+\frac12x^\top(S-\mathcal F_\pi)x .
\]
Then
\[
    \widetilde\delta:=\sup_x\|\nabla^2\widetilde r(x)\|_2
    \leq \delta_S+\|S-\mathcal F_\pi\|_2
    \leq 2\delta_S .
\]
The centering is preserved: \(\nabla\widetilde r(0)=\nabla r_S(0)=0\).
For the rest of the proof, write \(\mathcal F=\mathcal F_\pi\), \(r=\widetilde r\), and \(\delta=\widetilde\delta\). This reduces the theorem to the perturbative argument around the Gaussian reference \(\mathcal N(0,\mathcal F_\pi^{-1})\).
For any antisymmetric matrix $J$, let
\[
    \mathcal{L}_J
    = \Delta - (I+J)\nabla V\cdot\nabla
    = \Delta - \nabla V \cdot \nabla - (J \nabla V) \cdot \nabla
\]
be the nonreversible Langevin generator on $L^2(\pi)$, where $\pi \propto e^{-V}$. The theorem concerns $J=J^*$.
The Courant-Fischer variational principle used in the proof of Proposition~\ref{prop:stability:spectrum} applies only to self-adjoint (reversible) generators, so for $\mathcal{L}_{J^*}$ we use resolvent perturbation techniques for sectorial operators \citep[Ch.~IV--V]{kato2013perturbation}.

As in the proof of Proposition~\ref{prop:stability:spectrum}, set $R = \nabla r$. Since \(\nabla r(0)=0\), we have $\nabla V = \mathcal F x + R$ and $\|R(x)\| \leq \delta\|x\|$.
Let $U:L^2(\pi)\to L^2(dx)$ denote the ground-state transform $(Uf)(x)=e^{-V(x)/2}f(x)$, and define
\[
    \tilde{L}_J := -U\mathcal{L}_J U^{-1}.
\]
Then
\begin{equation}\label{eq:conjugated-op}
    \tilde{L}_J = -\Delta + \frac{1}{4}|\nabla V|^2 - \frac{1}{2}\Delta V + (J \nabla V) \cdot \nabla.
\end{equation}
Here the zeroth-order term from conjugating $(J\nabla V)\cdot\nabla$ vanishes because $(J\nabla V)\cdot\nabla V=0$.
The spectra of $-\mathcal{L}_J$ and $\tilde{L}_J$ are identical, with $0$ as the eigenvalue of smallest real part.
Moreover, the irreversible part is skew-adjoint in $L^2(\pi)$, since $\nabla\cdot(J\nabla V\,e^{-V})=0$; hence $-\mathcal L_J$ is accretive and every eigenvalue of $\tilde L_J$ has nonnegative real part.
Finally, decompose $\tilde{L}_J = \tilde{L}_J^{(0)} + \mathcal{W}_J$, where
\begin{align*}
    \tilde{L}_J^{(0)}
    &= -\Delta + \frac{1}{4}|\mathcal F x|^2 - \frac{1}{2}\mathrm{Tr}(\mathcal F) + (J\mathcal F x) \cdot \nabla,\\
    \mathcal{W}_J
    &= \frac{1}{2}(\mathcal F x) \cdot R + \frac{1}{4}|R|^2 - \frac{1}{2}\Delta r
    +(JR) \cdot \nabla .
\end{align*}
Here $\tilde{L}_J^{(0)}$ is the Gaussian reference operator associated with \(V_0(x)=\frac12 x^\top\mathcal F x\), i.e., the Gaussian distribution \(\mathcal N(0,\mathcal F^{-1})\), and $\mathcal{W}_J$ collects the non-Gaussian perturbation terms.

We recall the key spectral properties of this Gaussian reference operator from \citet{Lelivre2013}.
\begin{lemma}[{\citealp[Theorem~1, Proposition~10]{Lelivre2013}}]\label{lem:gaussian}
For the Gaussian potential $V_0(x) = \frac{1}{2}x^\top \mathcal F x$, equivalently the reference distribution \(\mathcal N(0,\mathcal F^{-1})\), any $J^* \in \mathcal J_\mathrm{s}(\mathcal F)$ with diagonalizable drift matrix $(I+J^*)\mathcal F$ achieves spectral gap
\[
\gamma^{(0)}(J^*) = \frac{\mathrm{Tr}(\mathcal F)}{d}.
\]
Moreover, the equivalent matrix $\tilde B_{J^*}:=\mathcal F+\mathcal F^{1/2}J^*\mathcal F^{1/2}$ is diagonalizable, so its eigenvalues are semi-simple. The spectrum of $\tilde{L}_{J^*}^{(0)}$ is purely discrete with eigenvalues $\{\sum_{k=1}^d n_k \sigma_k((I+J^*)\mathcal F) : n_k \in \mathbb{N}_0\}$, and $\Re(\sigma_k((I+J^*)\mathcal F)) = \frac{\mathrm{Tr}(\mathcal F)}{d}$ for all $k$. The operator has a generalized Hermite eigenbasis, so the eigenvalues of the Gaussian reference operator $\tilde{L}_{J^*}^{(0)}$ listed above are semi-simple.
\end{lemma}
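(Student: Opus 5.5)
The plan is to reduce everything to the structure of the Ornstein--Uhlenbeck generator $\mathcal L^{(0)}_{J^*}=\Delta-(I+J^*)\mathcal F x\cdot\nabla$ on $L^2(\gamma)$, where $\gamma=\mathcal N(0,\mathcal F^{-1})$. The ground-state transform $U$ is unitary from $L^2(\gamma)$ onto $L^2(dx)$ and maps this operator to $-\tilde L^{(0)}_{J^*}$. Hence the spectrum, the eigenvectors and the Jordan structure can all be read off on the $L^2(\gamma)$ side.

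\textbf{Step 1: diagonalizability of $\tilde B_{J^*}$.} This is the easy step. We have the similarity
\[
\tilde B_{J^*}=\mathcal F^{1/2}(I+J^*)\mathcal F\,\mathcal F^{-1/2}.
\]
So $\tilde B_{J^*}$ is diagonalizable exactly when $(I+J^*)\mathcal F$ is, and the two matrices have the same eigenvalues $\sigma_1,\dots,\sigma_d$. Since $J^*\in\mathcal J_\mathrm{s}(\mathcal F)$, and the real parts sum to $\mathrm{Tr}(\mathcal F)$ by skew-symmetry of $J^*$, every eigenvalue satisfies $\Re\sigma_k=\mathrm{Tr}(\mathcal F)/d$.

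\textbf{Step 2: Wiener chaos reduction.} Whiten by setting $y=\mathcal F^{1/2}x$, so that $\gamma$ becomes the standard Gaussian and the generator becomes $\Delta_y-\tilde B_{J^*}y\cdot\nabla_y$.
\begin{itemize}
\item Decompose $L^2(\gamma)=\bigoplus_{n\ge0}\mathcal H_n$ into Wiener chaoses, where $\mathcal H_n$ is spanned by the Hermite polynomials of total degree $n$.
\item A direct computation on Hermite polynomials shows that the operator maps each $\mathcal H_n$ into itself. This holds for any linear drift $\tilde B y$ whose symmetric part is $I$-compatible, i.e.\ whenever $\gamma$ is invariant.
\item Equivalently, the semigroup is the second quantization $\Gamma(e^{-t\tilde B_{J^*}^\top})$.
\item On $\mathcal H_n\cong(\mathbb C^d)^{\odot n}$ the generator therefore acts as the symmetric-tensor sum $-\mathrm{d}\Gamma_n(\tilde B_{J^*}^\top)$.
\end{itemize}

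\textbf{Step 3: eigenbasis, semi-simplicity and the spectral gap.} Let $v_1,\dots,v_d$ be an eigenbasis of $\tilde B_{J^*}^\top$ from Step 1. The symmetrized tensors $v_{k_1}\odot\cdots\odot v_{k_n}$ form an eigenbasis of $\mathrm{d}\Gamma_n(\tilde B_{J^*}^\top)$, with eigenvalues $\sum_k n_k\sigma_k$ where $\sum_k n_k=n$. These tensors correspond to the generalized Hermite polynomials in the variables $v_k\cdot y$ (Wick products). Two consequences follow.
\begin{itemize}
\item Each restriction to $\mathcal H_n$ is diagonalizable, and the chaoses are invariant and mutually orthogonal. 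So no Jordan block can couple different degrees, even when eigenvalue sums from different $n$ coincide, and every eigenvalue is semi-simple.
\item Finite-degree polynomials are dense, and each $\mathcal H_n$ is finite-dimensional. Together with the compact-resolvent (discreteness) result of \citet{metafune2002spectrum} quoted in the background section, this gives that the spectrum is exactly $\{\sum_k n_k\sigma_k:n_k\in\mathbb N_0\}$.
\end{itemize}
Taking real parts, the smallest nonzero value of $\Re\sum_k n_k\sigma_k$ is $\min_k\Re\sigma_k=\mathrm{Tr}(\mathcal F)/d$, attained at $n=1$. This gives $\gamma^{(0)}(J^*)=\mathrm{Tr}(\mathcal F)/d$.

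I expect the main obstacle to be the semi-simplicity claim, i.e.\ excluding Jordan blocks across chaos levels and making the infinite-dimensional spectral statement rigorous rather than just polynomial-algebraic. I would handle this through the orthogonal, invariant chaos decomposition, so that the problem reduces to a direct sum of finite-dimensional diagonalizable matrices. I would then check that the closure of this direct sum is the $L^2$ realization treated in \citet{metafune2002spectrum} and \citet{Lelivre2013}, so that no extra spectrum appears.
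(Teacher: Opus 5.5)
\paragraph{Overall assessment.} Your strategy is right and is essentially the argument behind the cited result; the paper itself gives no proof and only cites \citet{Lelivre2013}. That source works with creation/annihilation operators, i.e.\ the Fock-space (second-quantization) structure of the whitened drift matrix, and reads off the eigenvalues $\sum_k n_k\sigma_k$ and the generalized Hermite eigenfunctions from the invariant Hermite-degree sectors. Your Wiener-chaos picture on $L^2(\gamma)$ is the same decomposition before the ground-state transform, and Steps~1 and~3 are fine. However, the whitened generator displayed in Step~2 is wrong, and as written it makes the chaos-invariance claim false.

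\paragraph{The error in Step~2 and its fix.} Under $y=\mathcal F^{1/2}x$ one has $\nabla_x=\mathcal F^{1/2}\nabla_y$, so $\Delta_x=\nabla_y\cdot\mathcal F\nabla_y$. The generator therefore becomes
\[
\nabla_y\cdot\mathcal F\,\nabla_y-\tilde B_{J^*}y\cdot\nabla_y ,
\]
not $\Delta_y-\tilde B_{J^*}y\cdot\nabla_y$. For the operator you wrote, $\mathcal N(0,I)$ is invariant only if $\tilde B_{J^*}+\tilde B_{J^*}^\top=2I$, i.e.\ only if $\mathcal F=I$. So by your own criterion it does not preserve the chaoses. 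Concretely, for symmetric $A$, applying it to $y^\top Ay-\mathrm{Tr}A\in\mathcal H_2$ gives the constant term $2\,\mathrm{Tr}A$, whereas staying in $\mathcal H_2$ requires $2\,\mathrm{Tr}(A\mathcal F)$. With the correct diffusion matrix, the invariance condition reads $\tilde B_{J^*}+\tilde B_{J^*}^\top=2\mathcal F$, which holds because $\mathcal F^{1/2}J^*\mathcal F^{1/2}$ is skew. Your remaining claims are then correct: $P_t=\Gamma(e^{-t\tilde B_{J^*}^\top})$, the generator acts as $-\mathrm{d}\Gamma_n(\tilde B_{J^*}^\top)$ on $\mathcal H_n$, and Step~3 goes through.

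\paragraph{Removing the obstacle you flagged.} The corrected structure settles the rigor issue you anticipated without appealing to \citet{metafune2002spectrum}. Since $P_t$ acts on each finite-dimensional $\mathcal H_n$ by a matrix exponential, $\tilde L^{(0)}_{J^*}$ is unitarily the orthogonal direct sum of the blocks $\mathrm{d}\Gamma_n(\tilde B_{J^*}^\top)$. The Hermitian part of each block is $\mathrm{d}\Gamma_n(\mathcal F)\succeq n\,\sigma_{\min}(\mathcal F)$. Hence the block resolvents are bounded by $(n\sigma_{\min}(\mathcal F)-\Re z)^{-1}\to0$. This gives three things at once:
\begin{enumerate}
\item compactness of the resolvent;
\item the spectrum is exactly the union of the block spectra;
\item each Riesz projection is confined to finitely many chaoses, where the blocks are diagonalizable, which yields semi-simplicity.
\end{enumerate}
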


\begin{lemma}[{adapted from \citealp[Proposition~9]{Lelivre2013}}]\label{lem:compact}
For any antisymmetric $J$ and any Gaussian reference \(\mathcal N(0,Q^{-1})\) with precision matrix $Q \succ 0$, define
\[
    \tilde{L}_{J,Q}^{(0)}
    := -\Delta+\frac{1}{4}|Qx|^2-\frac{1}{2}\mathrm{Tr}(Q)+(JQx)\cdot\nabla .
\]
Then $\tilde{L}_{J,Q}^{(0)}$ is maximal accretive and sectorial with compact resolvent. Its kernel is one-dimensional, spanned by $e^{-x^\top Qx/4}$. In the proof we use $Q=\mathcal F$ and write $\tilde{L}_{J}^{(0)}=\tilde{L}_{J,\mathcal F}^{(0)}$.
\end{lemma}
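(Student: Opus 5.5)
The plan is to build $\tilde L_{J,Q}^{(0)}$ from a closed sectorial quadratic form and apply Kato's form representation theory \citep[Ch.~VI]{kato2013perturbation}. All four properties then come out of one energy estimate.

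\textbf{Split into symmetric and skew parts.} Write
\[
\tilde L^{(0)}_{J,Q}=H_Q+D_J,\qquad H_Q:=-\Delta+\tfrac14|Qx|^2-\tfrac12\mathrm{Tr}(Q),\qquad D_J:=(JQx)\cdot\nabla .
\]
The operator $H_Q$ is a shifted harmonic oscillator. It is self-adjoint and nonnegative, and since its potential grows like $|x|^2$ it has compact resolvent. Its lowest eigenvalue is $0$, attained by $\phi_Q:=e^{-x^\top Qx/4}$, and this eigenvalue is simple: $H_Q=a^*a$-type factorization gives $\langle u,H_Qu\rangle=\int|\nabla u+\tfrac12 Qx\,u|^2\,dx$.

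For $D_J$, note that $\nabla\cdot(JQx)=\mathrm{Tr}(JQ)=0$, because $J$ is skew and $Q$ is symmetric. Hence $D_J$ is formally skew-symmetric in $L^2(dx)$, i.e.\ $\Re\langle D_Ju,u\rangle=0$ for $u\in C_c^\infty$.

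\textbf{Sectorial form and the resulting operator.} Define on $C_c^\infty$
\[
\mathfrak t[u]=\|\nabla u\|^2+\tfrac14\||Qx|u\|^2-\tfrac12\mathrm{Tr}(Q)\|u\|^2+\langle (JQx)\cdot\nabla u,u\rangle .
\]
Its real part is $\mathfrak h[u]:=\langle u,H_Qu\rangle\ge0$. Its imaginary part is controlled by Cauchy--Schwarz:
\[
|\Im\mathfrak t[u]|\le\|JQ\|_2\,\||x|u\|\,\|\nabla u\|\le C\bigl(\mathfrak h[u]+\|u\|^2\bigr).
\]
Here I use that $\|\nabla u\|^2$ and $\||x|u\|^2$ are each bounded by the energy $\mathfrak h[u]+\tfrac12\mathrm{Tr}(Q)\|u\|^2$, which in turn uses $Q\succeq\sigma_{\min}(Q)I$ as in \eqref{eq:form-bound-h0}. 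So $\mathfrak t$ is a sectorial form, and it is closable with closure domain equal to the form domain of $H_Q$, namely the weighted space $\{u\in H^1:|x|u\in L^2\}$.

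By Kato's first representation theorem (VI Thm.~2.1), the closed form defines an m-sectorial operator, which is the realization of $\tilde L^{(0)}_{J,Q}$. In particular it is maximal accretive.

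\textbf{Compact resolvent and kernel.} The form domain embeds compactly into $L^2$ by Rellich's theorem together with the $|x|$-weight (tightness). Equivalently, $H_Q$, the real part of $\mathfrak t$, has compact resolvent, and Kato VI Thm.~3.3 transfers this to the m-sectorial operator.

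For the kernel, suppose $\tilde L^{(0)}_{J,Q}u=0$. Then
\[
0=\Re\langle \tilde L^{(0)}_{J,Q}u,u\rangle=\mathfrak h[u],
\]
so $u$ lies in $\ker H_Q=\mathrm{span}\{\phi_Q\}$. Conversely,
\[
D_J\phi_Q=-\tfrac12\,(JQx)\cdot(Qx)\,\phi_Q=0,
\]
since $y^\top Jy=0$ for every $y$ when $J$ is skew. Therefore the kernel is exactly $\mathrm{span}\{\phi_Q\}$.

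\textbf{Expected obstacle.} The delicate step is the domain issue: showing that the operator obtained from the form coincides with the closure of the differential expression on $C_c^\infty$, so that one knows which operator is being discussed. The drift $D_J$ is unbounded, with linearly growing coefficients.

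My first attempt would be a cutoff argument. Approximate $u$ in the form domain by $\chi(x/R)u$; the commutator with $D_J$ produces terms of size $|JQx|\,|\nabla\chi(x/R)|/R=O(1)$ on the annulus $|x|\sim R$, and these vanish in $L^2$ as $R\to\infty$. Alternatively, I would rely on the explicit treatment in \citet[Prop.~9]{Lelivre2013}, to which the statement already refers.
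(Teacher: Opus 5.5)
Your argument is correct, and it takes a different route from the paper. The paper gives no separate argument for this lemma. It imports all four properties from \citet[Proposition~9]{Lelivre2013}, where they are stated in isotropized coordinates $y=Q^{1/2}x$, and transports them to $L^2(dx)$ by the unitary change of variables in Remark~\ref{rmk:coordinate-convention}.

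You instead build the operator directly in the $x$ coordinates with Kato's form method:
\begin{itemize}
\item the real part of the form is the harmonic-oscillator form $\mathfrak h\ge 0$;
\item the divergence-free drift is form-bounded by $\mathfrak h$;
\item the first representation theorem then gives the m-sectorial realization;
\item compact resolvent is inherited from $H_Q$ via Kato's Theorem~VI.3.3;
\item the kernel follows from $\mathfrak h[u]=0$ together with $D_J\phi_Q=0$.
\end{itemize}

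Your route gives a self-contained proof for every antisymmetric $J$. It never has to match coordinates with Lelièvre et al., a step that needs some bookkeeping because under $y=Q^{1/2}x$ the Laplacian becomes $\nabla_y\cdot Q\nabla_y$.

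The citation, in turn, supplies information that the paper uses later and that your form construction does not produce by itself:
\begin{itemize}
\item a numerical-range sector with vertex $0$, since Step~1 of the proof of Theorem~\ref{thm:spectral-gap-stability} uses $|\mathrm{Im}\,w|\le\tan\theta\,\Re w$;
\item the graph-norm estimate $\|H_Qu\|\le C(\|\tilde L^{(0)}_{J,Q}u\|+\|u\|)$ used in Lemma~\ref{lem:rel_bound}, i.e.\ a statement about the operator domain, which form methods alone do not provide.
\end{itemize}

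Your bound $|\mathrm{Im}\,\mathfrak t[u]|\le C(\mathfrak h[u]+\|u\|^2)$ only yields a sector with vertex $-1$, but one more line gives vertex $0$. Write $u=c\phi_Q+v$ with $v\perp\phi_Q$. Skewness of $D_J$ and $D_J\phi_Q=0$ give $\langle D_Ju,u\rangle=\langle D_Jv,v\rangle$. The spectral gap of $H_Q$ gives $\mathfrak h[u]=\mathfrak h[v]\ge\sigma_{\min}(Q)\|v\|^2$. Hence $|\mathrm{Im}\,\mathfrak t[u]|\le C(1+\sigma_{\min}(Q)^{-1})\,\mathfrak h[u]$.

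For your domain question, run the cutoff argument in the graph norm on the operator domain, not in the form domain. The commutator term $(JQx\cdot\nabla\chi_R)u$ is indeed $O(1)$ on the annulus and vanishes in $L^2$ as $R\to\infty$. Local elliptic regularity ($u\in H^2_{\mathrm{loc}}$) plus mollification then shows that $C_c^\infty$ is a core. So your m-sectorial operator coincides with the closure of the differential expression on $C_c^\infty$.
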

\begin{rmk}[Coordinate convention]\label{rmk:coordinate-convention}
\citet[Proposition~9]{Lelivre2013} establishes these properties for the conjugated operator $-\tilde{\mathcal L}_J$ in isotropized coordinates $y = Q^{1/2}x$, where the kernel is $e^{-|y|^2/4}$.
Our operator $\tilde{L}_{J,Q}^{(0)}$ acts on $L^2(\mathbb R^d, dx)$ in the original $x$~coordinates.
The change of variables $y = Q^{1/2}x$ induces a unitary map $T\colon L^2(dy)\to L^2(dx)$ via $(Tf)(x) = |\!\det Q|^{1/4}f(Q^{1/2}x)$, and
$\tilde{L}_{J,Q}^{(0)} = T(-\tilde{\mathcal L}_J^{\mathrm{Lel}})T^{-1}$,
where $\tilde{\mathcal L}_J^{\mathrm{Lel}}$ denotes Leli\`evre et al.'s operator (with nonpositive spectrum).
All operator-theoretic properties cited above---compact resolvent, maximal accretivity, the sectorial property, and spectral structure---are invariant under unitary equivalence, so they transfer directly. The kernel transforms as $e^{-|y|^2/4}\mapsto e^{-x^\top Qx/4}$.
\end{rmk}

We also need the relative boundedness of the non-Gaussian part.

\begin{lemma}[Relative bound for the non-Gaussian part]\label{lem:rel_bound}
For each fixed antisymmetric \(J\), the perturbation $\mathcal{W}_J$ is $\tilde{L}_J^{(0)}$-bounded: there exists $C=C(\mathcal F,J,d) > 0$ such that
\begin{equation*}
    \|\mathcal{W}_J u\| \leq C\delta \bigl(\|\tilde{L}_J^{(0)} u\| + \|u\|\bigr)
\end{equation*}
for all $u$ in the domain of $\tilde{L}_J^{(0)}$.
\end{lemma}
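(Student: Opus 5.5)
The plan is to bound each term of $\mathcal W_J$ pointwise by $\delta$ times a weight, and then control those weights in $L^2$ by the graph norm of $\tilde L_J^{(0)}$. Recall that
\[
\mathcal W_J=\tfrac12(\mathcal F x)\cdot R+\tfrac14|R|^2-\tfrac12\Delta r+(JR)\cdot\nabla .
\]
From $\|R(x)\|\le\delta\|x\|$ and $\|\nabla^2 r\|\le\delta$ we get $|(\mathcal Fx)\cdot R|\le\|\mathcal F\|\delta|x|^2$, $|R|^2\le\delta^2|x|^2\le \delta\,c|x|^2$ (using $\delta\lesssim\sigma_{\min}(\mathcal F)$, so $\delta$ is bounded), $|\Delta r|\le d\delta$, and $|(JR)\cdot\nabla u|\le\|J\|\delta|x||\nabla u|$. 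Consequently
\[
\|\mathcal W_J u\|\le C\delta\bigl(\||x|^2u\|+\||x|\nabla u\|+\|u\|\bigr).
\]
It therefore suffices to prove the weighted estimate
\[
\||x|^2u\|+\||x|\nabla u\|\le C(\|\tilde L_J^{(0)}u\|+\|u\|)
\]
for $u$ in the domain. I will first prove it on the core $C_c^\infty$, or on Hermite-type functions times the Gaussian, and then extend by closure.

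To get the weighted estimate, I would pass to the isotropized coordinates $y=\mathcal F^{1/2}x$ of Remark~\ref{rmk:coordinate-convention}. The weights $|x|^2$ and $|x||\nabla_x|$ are equivalent, with constants depending on $\mathcal F$, to $|y|^2$ and $|y||\nabla_y|$. The operator splits as
\[
\tilde L^{(0)}_J=H+A,\qquad H=-\Delta+\tfrac14|\mathcal Fx|^2-\tfrac12\mathrm{Tr}\mathcal F,\qquad A=(J\mathcal Fx)\cdot\nabla .
\]
Here $H$ is a harmonic oscillator (self-adjoint, nonnegative). The operator $A$ is skew-symmetric in $L^2(dx)$, because $\nabla\cdot(J\mathcal Fx)=\mathrm{Tr}(J\mathcal F)=0$. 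For the oscillator $H$ the estimates $\||x|^2u\|+\|\Delta u\|+\||x|\nabla u\|\le C(\|Hu\|+\|u\|)$ are classical. One proves them by expanding $\|Hu\|^2$ and integrating by parts; the commutator $[\nabla,|\mathcal Fx|^2]$ produces only lower-order terms $|x|\,|\nabla u|\,|u|$, which are absorbed using Young's inequality together with the quadratic-form bound $\langle Hu,u\rangle+\|u\|^2\gtrsim\|\nabla u\|^2+\||x|u\|^2$. Writing $L=\tilde L_J^{(0)}$, it remains to show
\[
\|Hu\|\le C(\|Lu\|+\|u\|).
\]

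This last inequality is the main obstacle, since $A$ is unbounded and of the same order as $H$'s weighted part, so it is not $H$-bounded with a small constant. I would expand
\[
\|Lu\|^2=\|Hu\|^2+\|Au\|^2+2\Re\langle Hu,Au\rangle .
\]
Because $A$ is skew, the cross term equals $\langle [H,A]u,u\rangle$. A direct computation of the commutator of $-\Delta+\tfrac14|\mathcal Fx|^2$ with $(J\mathcal Fx)\cdot\nabla$ gives a second-order operator of the form $\nabla\cdot(\text{const matrix})\nabla$ plus $x^\top(\text{const matrix})x$. Its quadratic form is therefore bounded by $C(\|\nabla u\|^2+\||x|u\|^2)\le C(\langle Hu,u\rangle+\|u\|^2)\le \tfrac12\|Hu\|^2+C\|u\|^2$. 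This yields $\|Hu\|^2\le 2\|Lu\|^2+C\|u\|^2$.

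If the commutator bound turns out to be delicate, I would fall back on the Hermite/Ornstein--Uhlenbeck structure from Lemma~\ref{lem:gaussian}. In that structure $A$ preserves each finite-dimensional eigenspace of $H$, which makes the estimate immediate degree by degree. Combining the three steps gives the lemma with $C=C(\mathcal F,J,d)$.
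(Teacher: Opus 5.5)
Your proposal is correct and follows the paper's skeleton. You bound the four pieces of $\mathcal W_J$ pointwise by $\delta(1+|x|^2)$ and $\delta|x|\,|\nabla u|$. You control $\||x|^2u\|$ and $\||x|\nabla u\|$ by the harmonic oscillator $H$. Then you compare $H$ with $L=\tilde L_J^{(0)}$.

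The difference is in that last step. The paper does not prove $\|Hu\|\le C(\|Lu\|+\|u\|)$; it imports it from \citet[Lemma~4 and Proposition~9]{Lelivre2013}, where it comes from the creation--annihilation (Hermite) structure of the quadratic operator. You derive it from the energy identity $\|Lu\|^2=\|Hu\|^2+\|Au\|^2+\langle [H,A]u,u\rangle$, and the commutator is indeed harmless:
\[
[H,A]=-\nabla\cdot\bigl([J,\mathcal F]\nabla\bigr)+\tfrac14\,(\mathcal Fx)^\top[J,\mathcal F]\,(\mathcal Fx).
\]
So its form is bounded by $C(\|\nabla u\|^2+\||x|u\|^2)$, as you claim.

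What each route buys:
\begin{itemize}
\item Your route is self-contained, with constants explicit in $\|[J,\mathcal F]\|$ and $\mathcal F$.
\item The Hermite route gives the sharper, $J$-independent bound $\|Hu\|\le\kappa(\mathcal F)\|Lu\|$.
\item The Hermite route also supplies the core property that your ``extend by closure'' step asserts without proof. You need that property for either $C_c^\infty$ or the Hermite span, so you should cite it rather than assume it.
\end{itemize}

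Your fallback, however, is misstated. If $A$ preserved every eigenspace of $H$, that would force $[H,A]=0$. By the formula above, this requires $J\mathcal F=\mathcal FJ$. A spectrally optimal $J^*$ never commutes with an anisotropic $\mathcal F$: if it did, the real parts of the eigenvalues of $(I+J^*)\mathcal F$ would just be the eigenvalues of $\mathcal F$. So in the case of interest, $A$ does not preserve the eigenspaces of $H$.

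What is true is the following. Pass to $y=\mathcal F^{1/2}x$ and $L^2(\mathcal N(0,I))$, with $a=\nabla_y$, $a^*=-\nabla_y+y$, and $\tilde J=\mathcal F^{1/2}J\mathcal F^{1/2}$. Then $H\cong(a^*)^\top\mathcal F a$ and $L\cong(a^*)^\top(\mathcal F-\tilde J)a$. Both preserve the total-degree Hermite spaces $\mathcal H_n$. On $\mathcal H_n$ one has $\Re\langle Lv,v\rangle=\langle Hv,v\rangle\ge n\,\sigma_{\min}(\mathcal F)\|v\|^2$ and $\|Hv\|\le n\,\sigma_{\max}(\mathcal F)\|v\|$. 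These two bounds give the degree-by-degree estimate.
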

\begin{proof}[Proof of Lemma~\ref{lem:rel_bound}]\label{proof:lem:rel_bound}
Fix \(J\) and write \(\mathcal W=\mathcal W_J\). Consider the harmonic oscillator $H_0 := -\Delta + \frac{1}{4}|\mathcal F x|^2$, so that
\[
    \tilde{L}_J^{(0)} = H_0-\frac{1}{2}\mathrm{Tr}(\mathcal F)+(J\mathcal F x)\cdot\nabla .
\]
In the notation of \citet{Lelivre2013}, using the annihilation and creation operators $a_j = \partial_{x_j} + \frac{(\mathcal F^{1/2}x)_j}{2}$ and $a_j^* = -\partial_{x_j} + \frac{(\mathcal F^{1/2}x)_j}{2}$, we have $H_0 = a^{*,T} \mathcal F a + \frac{1}{2}\mathrm{Tr}(\mathcal F)$ (cf.\ \citet[Equation~(54)]{Lelivre2013}). Standard harmonic oscillator estimates (see \citet[Chapter~X]{reed1972methods}) give
\[
    \||x|^2u\|+\sum_{j,k}\|x_j\partial_k u\|
    \leq C(\|H_0u\|+\|u\|).
\]

\textbf{Multiplication terms.} The terms $\frac{1}{2}(\mathcal F x) \cdot R$, $\frac{1}{4}|R|^2$, and $-\frac{1}{2}\Delta r$ are bounded pointwise by $C\delta(1 + |x|^2)$ using $\|R(x)\| \leq \delta\|x\|$ and $\|\nabla^2 r\|_\infty \leq \delta$. The harmonic oscillator estimate yields:
\[
\bigl\|\bigl[\tfrac{1}{2}(\mathcal F x) \cdot R + \tfrac{1}{4}|R|^2 - \tfrac{1}{2}\Delta r\bigr] u\bigr\| \leq C\delta (\|H_0 u\| + \|u\|).
\]

\textbf{First-order term.} For the term $(JR) \cdot \nabla$, use the same creation-annihilation estimate for $x_j\partial_k$.
Since $\|R(x)\| \leq \delta \|x\|$, we obtain
\[
\|(JR) \cdot \nabla u\| \leq \|J\| \delta \sum_{j,k} \|x_j \partial_k u\| \leq C\delta (\|H_0 u\| + \|u\|).
\]

\textbf{Relating $H_0$ to $\tilde{L}_J^{(0)}$.} The graph-norm estimate for the Gaussian quadratic operator \citep[Lemma~4 and Proposition~9]{Lelivre2013} gives, for fixed $\mathcal F,J$,
\[
    \|H_0u\|\leq C_{\mathcal F,J}\bigl(\|\tilde{L}_J^{(0)}u\|+\|u\|\bigr).
\]
Combining with the estimates for $\mathcal{W}$ completes the proof.
\end{proof}

\subsection{Proof of Theorem~\ref{thm:spectral-gap-stability}}\label{app:thm:spectral-gap-stability}

With these ingredients in hand, we can now write the proof of \Cref{thm:spectral-gap-stability}.
\begin{proof}
Apply the preceding notation with \(J=J^*\), and write \(\mathcal W:=\mathcal W_{J^*}\).
Since $\tilde{L}_{J^*}^{(0)}$ has compact resolvent (Lemma~\ref{lem:compact}) and $\mathcal{W}$ is $\tilde{L}_{J^*}^{(0)}$-bounded with relative bound $a = C\delta < 1$ for $\delta$ sufficiently small (Lemma~\ref{lem:rel_bound}), the perturbed operator $\tilde{L}_{J^*} = \tilde{L}_{J^*}^{(0)} + \mathcal{W}$ also has compact resolvent. To see this, it suffices to produce a single $z_0 \in \rho(\tilde{L}_{J^*})$ at which the resolvent is compact. The algebraic identity
\begin{equation*}
    \tilde{L}_{J^*} - z_0 = \bigl(I + \mathcal{W}(\tilde{L}_{J^*}^{(0)} - z_0)^{-1}\bigr)(\tilde{L}_{J^*}^{(0)} - z_0)
\end{equation*}
gives, whenever $I + \mathcal{W}(\tilde{L}_{J^*}^{(0)} - z_0)^{-1}$ is invertible,
\begin{equation}\label{eq:perturbed-resolvent}
    (\tilde{L}_{J^*} - z_0)^{-1} = (\tilde{L}_{J^*}^{(0)} - z_0)^{-1}\bigl(I + \mathcal{W}(\tilde{L}_{J^*}^{(0)} - z_0)^{-1}\bigr)^{-1}.
\end{equation}
Since $(\tilde{L}_{J^*}^{(0)} - z_0)^{-1}$ is compact and $(I + \mathcal{W}(\tilde{L}_{J^*}^{(0)} - z_0)^{-1})^{-1}$ is bounded, the product is compact.

It remains to show $\|\mathcal{W}(\tilde{L}_{J^*}^{(0)} - z_0)^{-1}\| < 1$ for a suitable $z_0$. By Lemma~\ref{lem:rel_bound}, for all $u \in D(\tilde{L}_{J^*}^{(0)})$:
\[
\|\mathcal{W}u\| \leq a\|\tilde{L}_{J^*}^{(0)} u\| + b\|u\|, \qquad a = C\delta,\; b \geq 0.
\]
Applying this to $u = (\tilde{L}_{J^*}^{(0)} - z_0)^{-1}v$ and using $\tilde{L}_{J^*}^{(0)}(\tilde{L}_{J^*}^{(0)} - z_0)^{-1} = I + z_0(\tilde{L}_{J^*}^{(0)} - z_0)^{-1}$:
\[
\|\mathcal{W}(\tilde{L}_{J^*}^{(0)} - z_0)^{-1}v\| \leq a\bigl(1 + |z_0|\,\|(\tilde{L}_{J^*}^{(0)} - z_0)^{-1}\|\bigr)\|v\| + b\,\|(\tilde{L}_{J^*}^{(0)} - z_0)^{-1}\|\,\|v\|.
\]
By the sectorial property of $\tilde{L}_{J^*}^{(0)}$ \citep[Proposition~9]{Lelivre2013}, its numerical range $W(\tilde{L}_{J^*}^{(0)}) := \{\langle u, \tilde{L}_{J^*}^{(0)} u\rangle / \|u\|^2 : u \in D(\tilde{L}_{J^*}^{(0)}),\, u \neq 0\}$ is contained in $\overline{\Sigma_\theta}$, which lies in the closed right half-plane. For any $z \notin \overline{W(T)}$, the Cauchy--Schwarz inequality gives $\|(T-z)u\| \geq |\langle (T-z)u, u\rangle| = |\langle Tu, u\rangle - z| \geq \mathrm{dist}(z, W(T))\|u\|$ for unit $u$, so $(T-z)$ is bounded below and $\|(T-z)^{-1}\| \leq 1/\mathrm{dist}(z, \overline{W(T)})$ \citep[Ch.~V, \S3.2]{kato2013perturbation}. For real $z_0 < 0$, every $w \in \overline{W(\tilde{L}_{J^*}^{(0)})}$ has $\Re(w) \geq 0$, so $\mathrm{dist}(z_0, \overline{W(\tilde{L}_{J^*}^{(0)})}) \geq |z_0|$ and therefore $\|(\tilde{L}_{J^*}^{(0)} - z_0)^{-1}\| \leq 1/|z_0|$. Thus
\[
\|\mathcal{W}(\tilde{L}_{J^*}^{(0)} - z_0)^{-1}\| \leq 2a + \frac{b}{|z_0|}.
\]
If $\delta$ is small enough that $2a = 2C\delta < 1$, then choosing $|z_0|$ large enough gives $\|\mathcal{W}(\tilde{L}_{J^*}^{(0)} - z_0)^{-1}\| < 1$. The Neumann series converges, $(I + \mathcal{W}(\tilde{L}_{J^*}^{(0)} - z_0)^{-1})^{-1}$ is bounded, and~\eqref{eq:perturbed-resolvent} confirms that $(\tilde{L}_{J^*} - z_0)^{-1}$ is compact.

Since $(\tilde{L}_{J^*} - z_0)^{-1}$ is compact, the spectrum of $\tilde{L}_{J^*}$ is purely discrete: every spectral point is an isolated eigenvalue of finite algebraic multiplicity, and the resolvent is meromorphic with finite-rank residues at each pole \citep[Ch.~III, \S6.5, Eq.~(6.32)]{kato2013perturbation}. The eigenvalue $0$ persists with eigenfunction $e^{-V/2} \in \ker(\tilde{L}_{J^*})$ for any $V$ and remains simple since $\pi \propto e^{-V}$ is the unique invariant measure of the ergodic diffusion.

Set $\gamma := \mathrm{Tr}(\mathcal F)/d$ and let $\theta < \pi/2$ denote the semi-angle of the sector $\Sigma_\theta = \{re^{i\phi}: r\geq 0, |\phi|\leq\theta\}$ containing the spectrum of $\tilde{L}_{J^*}^{(0)}$, with $\tan\theta \leq \|J^*\|/\sigma_{\min}(\mathcal F)$ by \citet[Equation~(55)]{Lelivre2013}.
Throughout the proof, all constants $C_1, \ldots, C_7$ depend only on $\mathcal F, J^*, d$ and are independent of $\delta$.

The proof proceeds in four steps: (1) a numerical range bound showing eigenvalues with small real part have small modulus; (2) a Riesz projection showing the ground state is the unique eigenvalue near the origin; (3) a resolvent estimate showing a strip is eigenvalue-free; (4) combining the previous steps to conclude.

\textbf{Step 1: Eigenvalues with small real part have small modulus.}
Let $\zeta$ be an eigenvalue of $\tilde{L}_{J^*}$ with unit eigenvector $u$, so $\zeta = \langle u, \tilde{L}_{J^*} u\rangle$. Since $\mathcal{W}$ has relative bound $<1$, the domains of $\tilde{L}_{J^*}$ and $\tilde{L}_{J^*}^{(0)}$ coincide, so
\[
\zeta = \langle u, \tilde{L}_{J^*}^{(0)} u\rangle + \langle u, \mathcal{W}u\rangle.
\]
By the sectorial property \citep[Proposition~9]{Lelivre2013}, the numerical range of $\tilde{L}_{J^*}^{(0)}$ lies in $\overline{\Sigma_\theta}$, i.e., $\langle u, \tilde{L}_{J^*}^{(0)} u\rangle / \|u\|^2 \in \overline{\Sigma_\theta}$ for all $u$ in the domain. Any point $w \in \overline{\Sigma_\theta}$ satisfies $|\mathrm{Im}(w)| \leq \tan\theta \cdot \Re(w)$ (since it lies in a cone of half-angle $\theta$), so
\begin{equation}\label{eq:num-range-sector}
|\mathrm{Im}\langle u, \tilde{L}_{J^*}^{(0)} u\rangle| \leq \tan\theta \cdot \Re\langle u, \tilde{L}_{J^*}^{(0)} u\rangle.
\end{equation}
To bound the perturbation term, note $\tilde{L}_{J^*}^{(0)} u = \zeta u - \mathcal{W}u$ and the relative bound (Lemma~\ref{lem:rel_bound}) give
$\|\tilde{L}_{J^*}^{(0)} u\| \leq |\zeta| + C\delta(\|\tilde{L}_{J^*}^{(0)} u\| + 1)$, hence $\|\tilde{L}_{J^*}^{(0)} u\| \leq (|\zeta| + C\delta)/(1-C\delta)$ for $C\delta < 1/2$. Substituting back:
\begin{equation}\label{eq:calWu-bound}
|\langle u, \mathcal{W}u\rangle| \leq \|\mathcal{W}u\| \leq C\delta(\|\tilde{L}_{J^*}^{(0)} u\| + 1) \leq \frac{C\delta}{1-C\delta}(|\zeta| + 1) \leq C'\delta(|\zeta| + 1),
\end{equation}
where $C' = C/(1-C\delta) \leq 2C$ since $C\delta < 1/2$.
We now combine these estimates. Taking imaginary parts of $\zeta = \langle u, \tilde{L}_{J^*}^{(0)} u\rangle + \langle u, \mathcal{W}u\rangle$ and applying the triangle inequality:
\[
|\mathrm{Im}(\zeta)| \leq |\mathrm{Im}\langle u, \tilde{L}_{J^*}^{(0)} u\rangle| + |\mathrm{Im}\langle u, \mathcal{W}u\rangle|.
\]
The first term is bounded by~\eqref{eq:num-range-sector}: $|\mathrm{Im}\langle u, \tilde{L}_{J^*}^{(0)} u\rangle| \leq \tan\theta \cdot \Re\langle u, \tilde{L}_{J^*}^{(0)} u\rangle$. To express this in terms of $\zeta$, take real parts of the same identity:
\[
\Re\langle u, \tilde{L}_{J^*}^{(0)} u\rangle = \Re(\zeta) - \Re\langle u, \mathcal{W}u\rangle \leq \Re(\zeta) + |\langle u, \mathcal{W}u\rangle| \leq \Re(\zeta) + C'\delta(|\zeta|+1),
\]
where the last step uses~\eqref{eq:calWu-bound}. The second term satisfies $|\mathrm{Im}\langle u, \mathcal{W}u\rangle| \leq |\langle u, \mathcal{W}u\rangle| \leq C'\delta(|\zeta|+1)$. Combining:
\[
|\mathrm{Im}(\zeta)| \leq \tan\theta\,[\Re(\zeta) + C'\delta(|\zeta|+1)] + C'\delta(|\zeta|+1).
\]
For $\Re(\zeta) \leq \epsilon$ (where $\epsilon = 2C_6\delta$, chosen in Step~4), we use $|\zeta| \leq |\Re(\zeta)| + |\mathrm{Im}(\zeta)| \leq \epsilon + |\mathrm{Im}(\zeta)|$ and set $A := (\tan\theta+1)C'\delta$:
\begin{align*}
|\mathrm{Im}(\zeta)| &\leq \tan\theta\cdot\epsilon + A\bigl(\epsilon + |\mathrm{Im}(\zeta)| + 1\bigr)\\
&= \tan\theta\cdot\epsilon + A(\epsilon + 1) + A\,|\mathrm{Im}(\zeta)|.
\end{align*}
Rearranging:
\[
(1-A)\,|\mathrm{Im}(\zeta)| \leq \tan\theta\cdot\epsilon + A(\epsilon + 1).
\]
For $\delta$ small enough that $A < 1/2$, this gives $|\mathrm{Im}(\zeta)| \leq C''\delta$ and hence $|\zeta| \leq \epsilon + C''\delta < \gamma/2$ for $\delta$ small.

\textbf{Step 2: Riesz projection near the origin.}
We show that exactly one eigenvalue of $\tilde{L}_{J^*}$ satisfies $|z| < \gamma/2$.
Let $\Gamma$ be the circle $\{|z| = \gamma/2\}$. For the unperturbed operator, $\Gamma$ encloses only the simple eigenvalue $0$ (the next eigenvalues have real part $\gamma$, hence modulus at least $\gamma>\gamma/2$). On $\Gamma$, the resolvent $(\tilde{L}_{J^*}^{(0)} - z)^{-1}$ is bounded: $\Gamma$ is at distance $\gamma/2$ from $0$ and at least $\gamma/2$ from all other eigenvalues, so $\sup_{z\in\Gamma}\|(\tilde{L}_{J^*}^{(0)} - z)^{-1}\| =: M_\Gamma < \infty$ by continuity on a compact subset of the resolvent set.
By Lemma~\ref{lem:rel_bound} and the identity $\tilde{L}_{J^*}^{(0)}(\tilde{L}_{J^*}^{(0)} - z)^{-1} = I + z(\tilde{L}_{J^*}^{(0)} - z)^{-1}$:
\begin{equation}\label{eq:calW-resolvent-bound}
    \|\mathcal{W}(\tilde{L}_{J^*}^{(0)} - z)^{-1}\| \leq C_1\delta\Big(1 + (1+|z|)\|(\tilde{L}_{J^*}^{(0)} - z)^{-1}\|\Big),
\end{equation}
which on $\Gamma$ (where $|z| = \gamma/2$) yields $\|\mathcal{W}(\tilde{L}_{J^*}^{(0)} - z)^{-1}\| \leq C_3\delta < 1$ for $\delta$ small.
The Neumann series converges on $\Gamma$, so the Riesz projection $P = \frac{1}{2\pi i}\oint_\Gamma (\tilde{L}_{J^*} - z)^{-1}\,dz$ is well-defined and satisfies $\mathrm{rank}(P) = \mathrm{rank}(P^{(0)}) = 1$ by stability of Riesz projections \citep[Chapter~IV, Theorem~3.16]{kato2013perturbation}. Since $0 \in \sigma(\tilde{L}_{J^*})$, it is the unique eigenvalue inside $\Gamma$.
By Step~1, any eigenvalue with real part below $\epsilon$ has modulus $O(\delta)<\gamma/2$ for $\delta$ small, hence lies inside $\Gamma$ and must be $0$.

\textbf{Step 3: Eigenvalue-free strip.}
Since $\tilde{L}_{J^*}^{(0)}$ has compact resolvent (Lemma~\ref{lem:compact}), its spectrum is purely discrete---every spectral point is an isolated eigenvalue of finite multiplicity. By Proposition~\ref{lem:gaussian}, these eigenvalues are organized into levels: $\{0\}$ at level $0$, and $n$-th level eigenvalues $\sum_{k=1}^d n_k \sigma_k((I+J^*)\mathcal F)$ with $\sum_k n_k = n$ satisfying real part \(n\gamma\) for $n \geq 1$.
In particular, the strip $\{z : 0 < \Re(z) < \gamma\}$ contains no eigenvalue of $\tilde{L}_{J^*}^{(0)}$.
We show that $\|\mathcal{W}(\tilde{L}_{J^*}^{(0)} - z)^{-1}\| < 1$ for all $z$ in the sub-strip $\mathcal S_\epsilon := \{z : \epsilon \leq \Re(z) \leq \gamma - \epsilon\}$, where $\epsilon > 0$ is chosen proportional to $\delta$.

We bound $(1+|z|)\|(\tilde{L}_{J^*}^{(0)} - z)^{-1}\|$ uniformly on $\mathcal{S}_\epsilon$ by splitting into two regions. Fix $\rho = \gamma/4$.
Write $\sigma_k^{(0)}:=\sigma_k((I+J^*)\mathcal F)$ for the first-level eigenvalues of the Gaussian reference operator.

\emph{Far from poles.} Let us set
\[
\mathcal{S}_\epsilon^{\mathrm{far}} = \{z \in \mathcal{S}_\epsilon : |z| \geq \rho \text{ and } |z - \sigma_k^{(0)}| \geq \rho \text{ for all first-level eigenvalues } \sigma_k^{(0)}\}.
\]
On this set, $z$ lies in the resolvent set of $\tilde{L}_{J^*}^{(0)}$, so $z \mapsto (1+|z|)\|(\tilde{L}_{J^*}^{(0)} - z)^{-1}\|$ is continuous. As $|z| \to \infty$ within $\mathcal{S}_\epsilon$, the real part stays in $[\epsilon, \gamma - \epsilon]$ while $|\mathrm{Im}(z)| \to \infty$, so $z$ eventually exits $\overline{\Sigma_\theta}$ and the resolvent estimate \citep[Ch.~V, \S3.2]{kato2013perturbation} gives $(1+|z|)\|(\tilde{L}_{J^*}^{(0)} - z)^{-1}\| \leq (1+|z|)/\mathrm{dist}(z, \overline{\Sigma_\theta}) \to 1/\cos\theta$. A continuous function with a finite limit at infinity attains a finite supremum $M = M(\mathcal F, J^*, d)$, independent of $\epsilon$. By~\eqref{eq:calW-resolvent-bound}, $\|\mathcal{W}(\tilde{L}_{J^*}^{(0)} - z)^{-1}\| \leq C_2\delta$ on $\mathcal{S}_\epsilon^{\mathrm{far}}$.

\emph{Near poles.} Let $\mathcal{S}_\epsilon^{\mathrm{near}} = \mathcal{S}_\epsilon \setminus \mathcal{S}_\epsilon^{\mathrm{far}}$, i.e., $z \in \mathcal{S}_\epsilon$ with $|z| < \rho$ or $|z - \sigma_k^{(0)}| < \rho$ for some first-level eigenvalue $\sigma_k^{(0)}$. Only the eigenvalues at level~$0$ (the origin) and level~$1$ (where real parts equal $\gamma$) border the strip; higher levels have real parts at least \(2\gamma\) and are at distance $\geq \gamma$ from $\mathcal{S}_\epsilon$.

By \Cref{lem:gaussian}, the relevant eigenvalues of the Gaussian reference operator are semi-simple. Hence the resolvent has a first-order pole at each such eigenvalue, with residue equal to the negative eigenprojection \citep[Ch.~III, \S6.5, Eq.~(6.32)]{kato2013perturbation}.

Near the eigenvalue $0$: the Laurent expansion gives $(\tilde{L}_{J^*}^{(0)} - z)^{-1} = -P_0/z + S_0(z)$, where $P_0$ is the eigenprojection onto the ground state and $S_0$ is holomorphic for $|z| < \rho$. Since $|z| \geq \epsilon$ and $|z| < \rho$ in this region, $(1+|z|)\|(\tilde{L}_{J^*}^{(0)} - z)^{-1}\| \leq (1+\rho)(\|P_0\|/\epsilon + \|S_0\|_\infty)$, so~\eqref{eq:calW-resolvent-bound} gives $\|\mathcal{W}(\tilde{L}_{J^*}^{(0)} - z)^{-1}\| \leq 2C_1\|P_0\|\delta/\epsilon + C_5\delta$.
The same estimate holds near each first-level eigenvalue $\sigma_k^{(0)}$ (with $\Re(\sigma_k^{(0)}) = \gamma$), where the resolvent has a first-order pole with eigenprojection $P_k$ and $|z - \sigma_k^{(0)}| \geq \epsilon$.
Combining all poles: $\|\mathcal{W}(\tilde{L}_{J^*}^{(0)} - z)^{-1}\| \leq C_6\delta/\epsilon + C_7\delta$ on $\mathcal{S}_\epsilon^{\mathrm{near}}$, where $C_6 := C_1\max(2\|P_0\|, \max_k(1+|\sigma_k^{(0)}|)\|P_k\|)$.

\textbf{Step 4: Choosing $\epsilon$ and concluding.}
Set $\epsilon := 2C_6\delta$. We require $\delta$ small enough that
\begin{enumerate}
    \item[(i)] $A = (\tan\theta+1)C'\delta < 1/2$ and $\epsilon + C''\delta < \gamma/2$ (Step~1: $|\zeta| < \gamma/2$),
    \item[(ii)] $C_3\delta < 1$ (Step~2: Neumann convergence on $\Gamma$),
    \item[(iii)] $C_7\delta < 1/2$ (Step~3, near poles: $C_6\delta/\epsilon + C_7\delta = 1/2 + C_7\delta < 1$),
    \item[(iv)] $C_2\delta < 1$ (Step~3, far from poles),
    \item[(v)] $\epsilon < \gamma/2$, i.e., $2C_6\delta < \gamma/2$ (sub-strip $\mathcal S_\epsilon$ is nonempty).
\end{enumerate}
All conditions are satisfied for $\delta < \delta_0 := \delta_0(\mathcal F, J^*, d)$.
The Neumann series
\[
(\tilde{L}_{J^*} - z)^{-1} = (\tilde{L}_{J^*}^{(0)} - z)^{-1}\sum_{n=0}^\infty \bigl(-\mathcal{W}(\tilde{L}_{J^*}^{(0)} - z)^{-1}\bigr)^n
\]
converges for all $z \in \mathcal S_\epsilon$, so $\mathcal S_\epsilon \cap \sigma(\tilde{L}_{J^*}) = \emptyset$.
By Step~2, the only eigenvalue with real part below $\epsilon$ is $0$. Therefore every nonzero eigenvalue has real part at least $\gamma - \epsilon = \gamma - 2C_6\delta$, and with $C := 2C_6$:
\begin{equation*}
    \gamma(J^*) := \inf\bigl\{\Re(\zeta) : \zeta \in \sigma(\tilde{L}_{J^*}),\; \zeta \neq 0\bigr\} \geq \frac{\mathrm{Tr}(\mathcal F)}{d} - C\delta,
\end{equation*}
where in this proof \(\mathcal F=\mathcal F_\pi\) and \(\delta=\widetilde\delta\leq2\delta_S\). Returning to the notation of the theorem gives
\[
    \gamma(J^*)\geq \frac{\mathrm{Tr}(\mathcal F_\pi)}{d}-2C\delta_S .
\]
Also,
\[
    \left|\frac{\mathrm{Tr}(\mathcal F_\pi)-\mathrm{Tr}(S)}{d}\right|
    \leq \|\mathcal F_\pi-S\|_2\leq\delta_S,
\]
which yields the equivalent lower bound relative to \(\mathrm{Tr}(S)/d\).
This completes the proof.
\end{proof}

\begin{rmk}[Non-diagonalizable references]
The diagonalizability assumption is used only in Step~3, where semi-simplicity makes the resolvent poles at the first spectral level of the Gaussian reference operator first order. If a relevant eigenvalue of this reference operator has a Jordan block of size \(m>1\), the corresponding pole may have order \(m\). The same perturbative picture then gives qualitative stability only at the weaker scale \(\delta^{1/m}\), rather than the linear \(O(\delta)\) loss proved above.
\end{rmk}

\section{Relating the Fisher information matrix to local or global geometry}\label{app:fisher-matrix-visual}
\begin{figure}[htbp]
\centering
    \begin{subfigure}[b]{0.7\textwidth}
        \includegraphics[width=\textwidth]{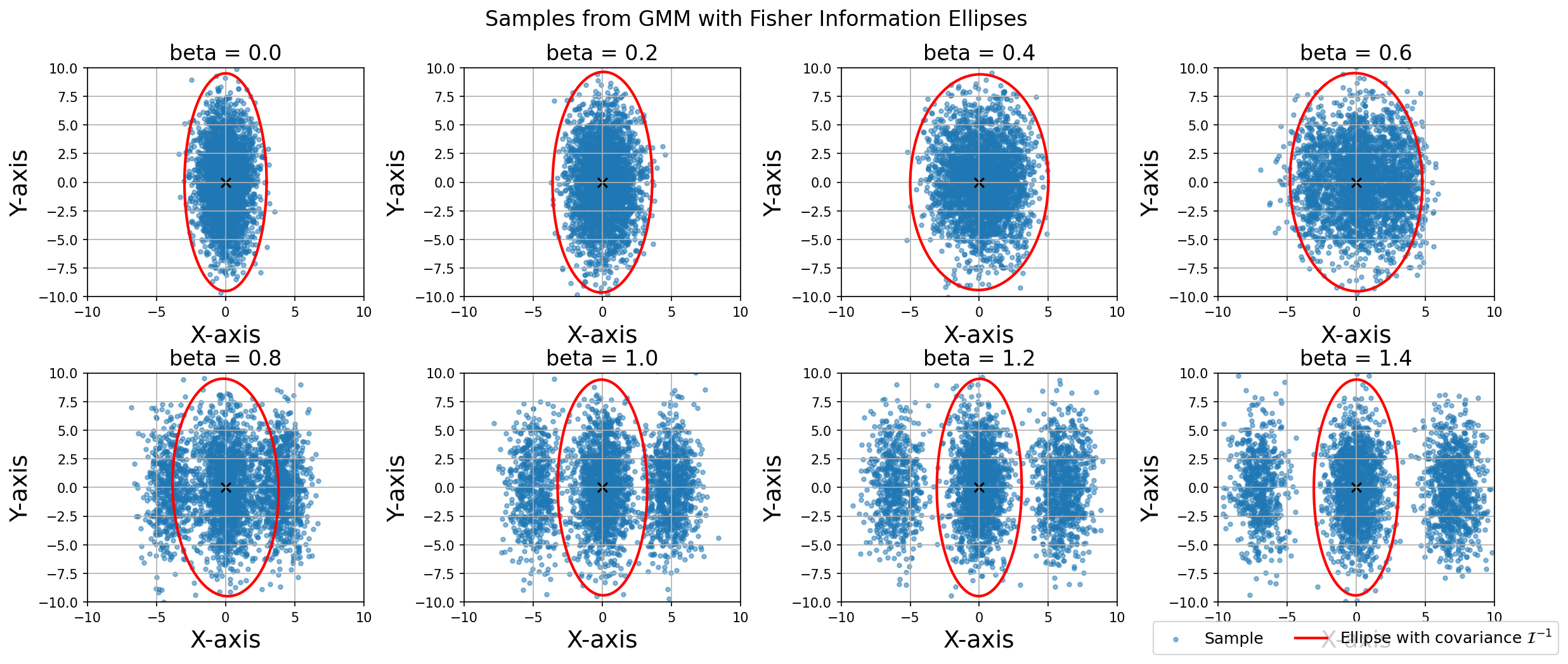}
    \end{subfigure}
\caption[Global and local Fisher matrix with mixed modes]{The Fisher information matrix interpolates between local and global geometry as modes merge. When modes are well separated, $\mathcal{F} \approx \sum_k w_k \mathcal{F}_k$ (weighted sum of per-component Fisher information matrices); as overlap increases, $\mathcal{F}$ transitions to reflect the merged mode structure.\label{fig:fisher_geometry2}}
\end{figure}

For a mixture $\pi = \sum_{k} w_k \pi_k$ with well-separated components, the density at most points is dominated by a single component $\pi_i$, so $\nabla \log \pi(x) \approx \nabla \log \pi_i(x)$ and the Fisher information matrix decomposes as $\mathcal{F} \approx \sum_k w_k \mathcal{F}_k$, where $\mathcal{F}_k := \mathbb{E}_{\pi_k}[\nabla \log \pi_k \, \nabla \log \pi_k^\top]$. As the modes overlap, this decomposition breaks down and \(\mathcal F\) transitions toward the geometry of the merged distribution.

\Cref{fig:fisher_geometry2} visualizes this transition using an ellipse associated with \(\mathcal F\). In this example, the off-diagonal entries vanish, so \(\mathcal F\) is diagonal and the ellipse is aligned with the coordinate axes. The lengths of its two semi-axes are determined by the two diagonal entries of \(\mathcal F\), thereby illustrating how the local geometry encoded by the Fisher matrix changes as the modes merge.

This also highlights the limitation of Fisher information-based preconditioning.
Although \(\mathcal F^{-1}\) adapts to local anisotropy, \(\mathcal F\) primarily reflects within-mode geometry and does not directly encode the inter-modal directions needed for global exploration.

\section{Additional experiments and numerical details}\label{app:add-experiment-details}
\subsection{Non-isotropic Gaussian}\label{app:numerical:iso-gaussian}
Additional results for MSE, bias, and variance across different sampling methods under fixed budget and fixed simulation time are provided in \Cref{fig:app:exp1:1} and~\Cref{fig:app:exp1:2}. \Cref{fig:app:exp1:3} further reports estimation error for other statistics corresponding to different proxies.
\begin{figure}[htbp]
\centering
\begin{minipage}[t]{0.89\textwidth}
    \centering
    \includegraphics[width=\textwidth]{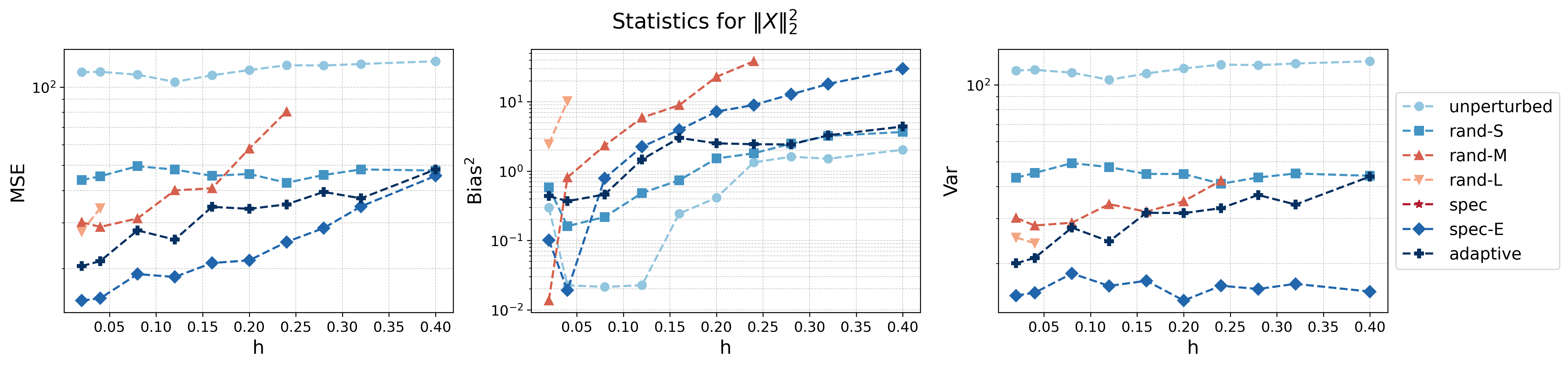}
    \includegraphics[width=\textwidth]{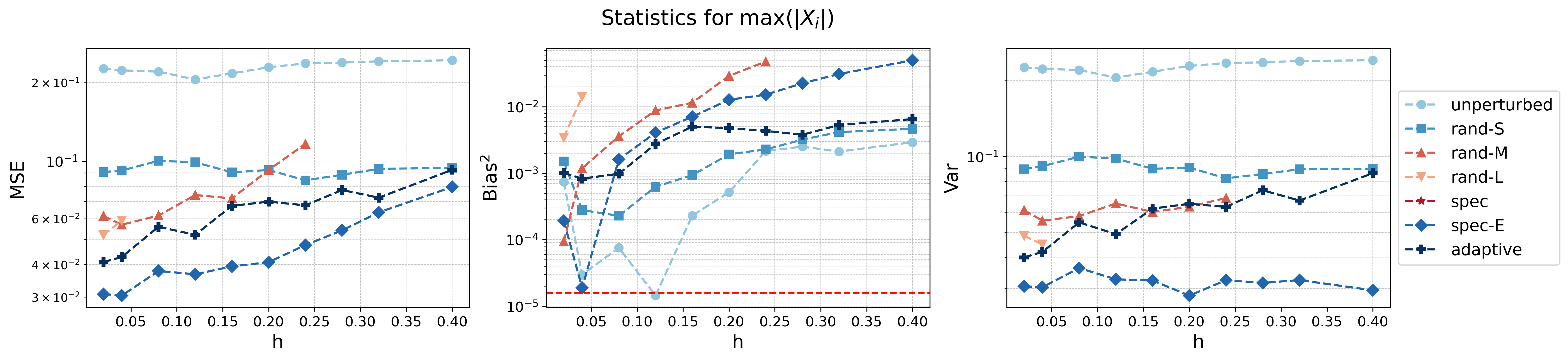}
    \includegraphics[width=\textwidth]{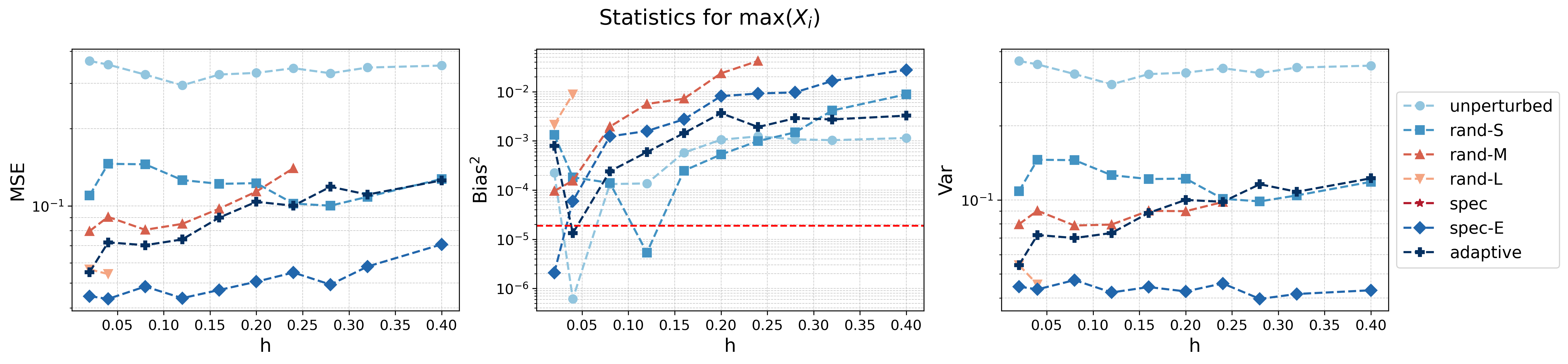}
    \includegraphics[width=\textwidth]{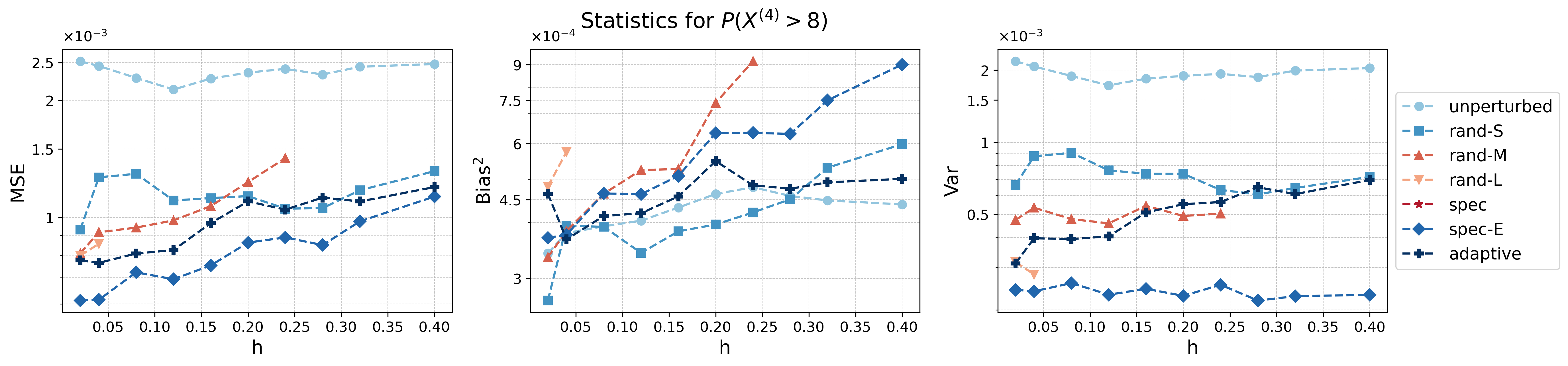}
\end{minipage}
\caption[Non-isotropic Gaussian, MSE vs. step size, fixed time]{Additional result for non-isotropic Gaussian, fixed simulation time $T = hK= 10{,}000$. Plots show MSE, squared bias, and variance for different observables for varying $h$. Statistics are computed using $M = 512$ replicated MCMC chains.
Our method attains the lowest MSE over a broad range of $h$, while
alternatives either converge more slowly or exhibit bias blow-ups at some $h$.\label{fig:app:exp1:1}}
\end{figure}
\begin{figure}[htbp]
\centering
\begin{minipage}[t]{0.89\textwidth}
    \centering
    \centering
    \includegraphics[width=\textwidth]{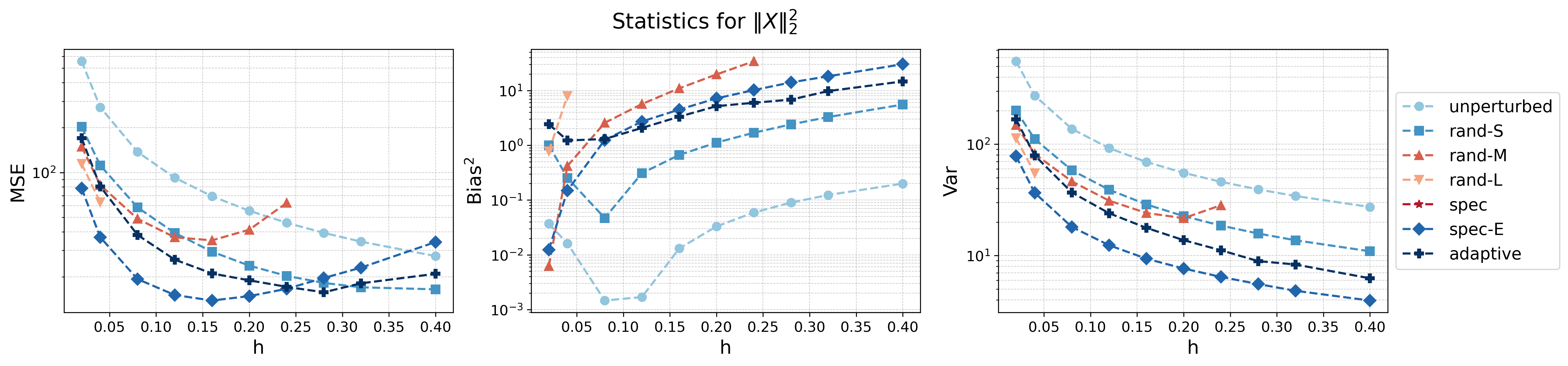}
    \includegraphics[width=\textwidth]{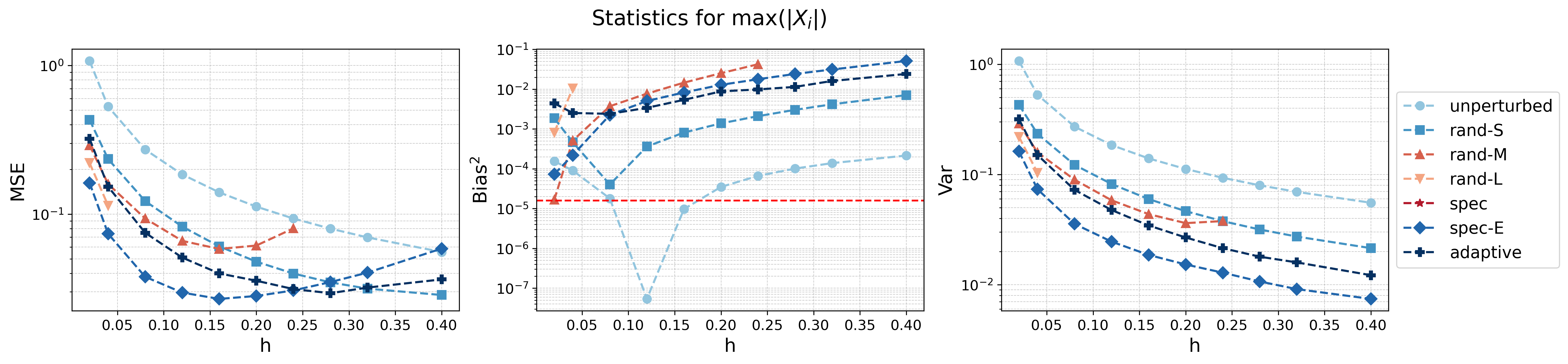}
    \includegraphics[width=\textwidth]{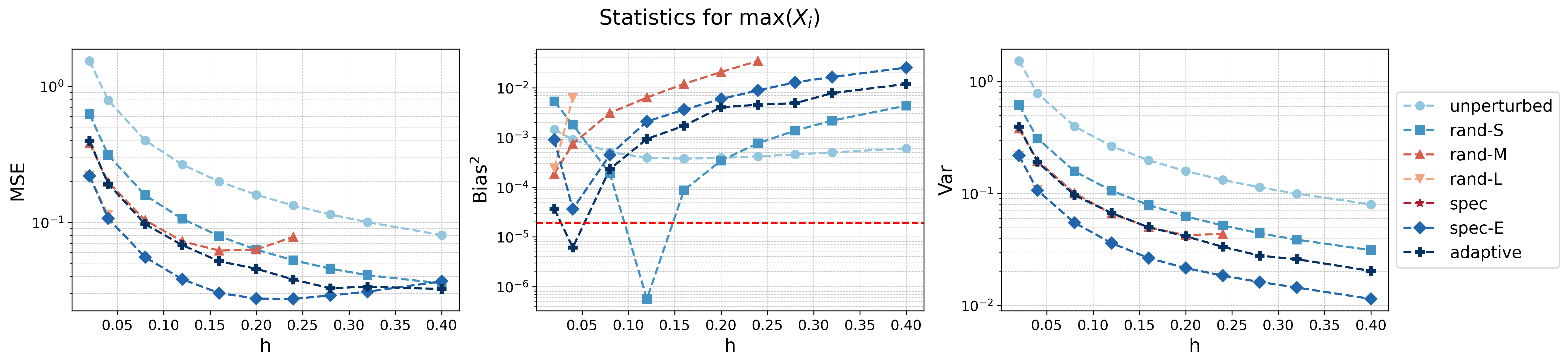}
    \includegraphics[width=\textwidth]{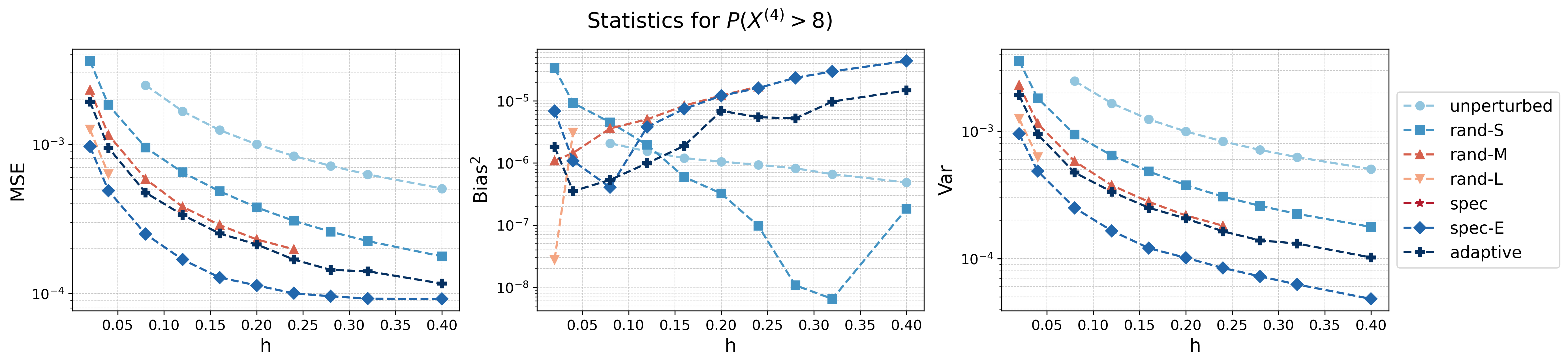}
\end{minipage}
\caption[Non-isotropic Gaussian, MSE vs. step size, fixed budget]{Additional result for non-isotropic Gaussian, fixed computational budget $K = 10^5$. Plots show MSE, squared bias, and variance of different observables  for varying $h$. Statistics are computed using $M = 512$ replicated MCMC chains.
Our method attains the lowest MSE over a broad range of $h$, while
alternatives either converge more slowly or exhibit bias blow-ups at some $h$.\label{fig:app:exp1:2}}
\end{figure}

\begin{figure}[htbp]
\centering
\begin{minipage}[t]{0.89\textwidth}
    \centering
    \includegraphics[width=\textwidth]{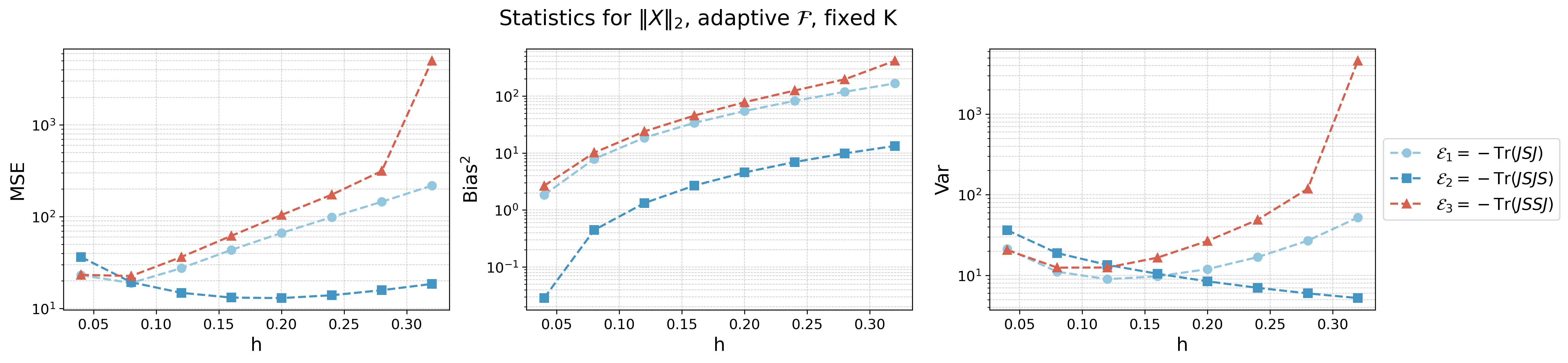}
    \includegraphics[width=\textwidth]{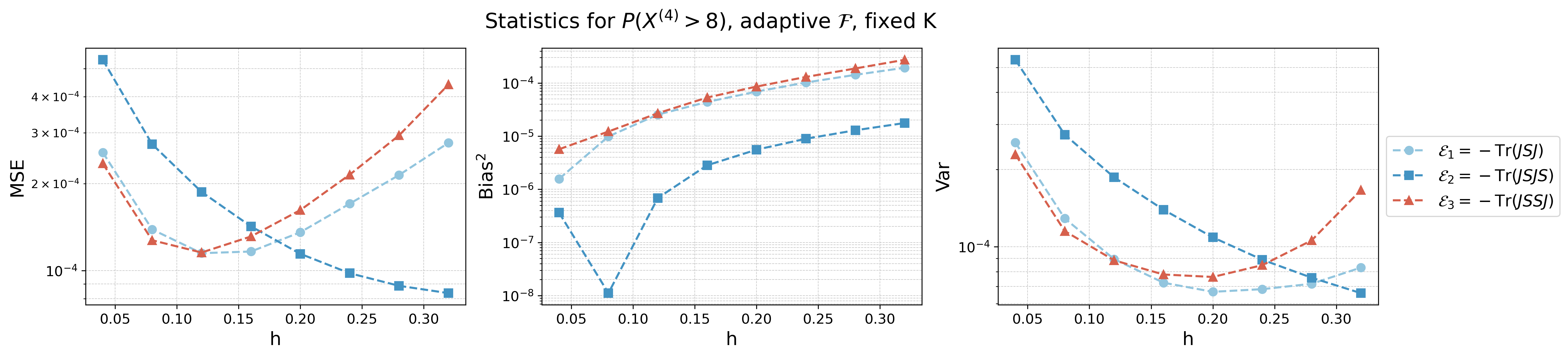}
    \includegraphics[width=\textwidth]{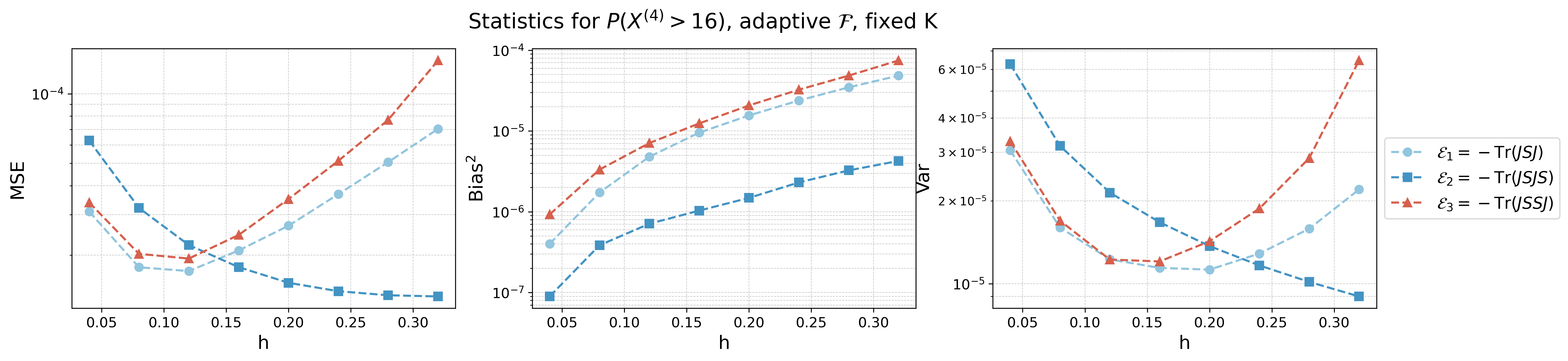}
\end{minipage}
\caption[Non-isotropic Gaussian, adaptive]{Additional results for non-isotropic Gaussian, fixed budget $K = 10^5$. Plots show MSE, squared bias, and variance of different observables for varying $h$. Here $\mathcal F=S$, $M = 512$.
The results show that different proxies for numerical error yield consistent behavior in the resulting statistical estimates. The adaptive Fisher-information-matrix scheme produces a stable but slower-converging algorithm; however, as $K$ increases, this adaptive approach also converges to the non-adaptive solution.\label{fig:app:exp1:3}}
\end{figure}

\subsection{Mixture of Gaussians}\label{app:numerical:mix-gaussian}
The target density is
\begin{equation}
    \pi(x) \;=\; \sum_{k=1}^{5} w_k \, \mathcal{N}(x \mid \mu_k, \Sigma_k),
\end{equation}
where the mixture parameters are:
\begin{align*}
    &\text{Means:} \quad
    \mu_1 = (-30, 0, 0)^\top, \quad
    \mu_2 = (-15, 0, 0)^\top, \quad
    \mu_3 = (0, 0, 0)^\top, \\
    &\hspace{4.5cm}
    \mu_4 = (15, 0, 0)^\top, \quad
    \mu_5 = (30, 0, 0)^\top, \\[0.5em]
    &\text{Weights:} \quad
    w = (0.3,\, 0.1,\, 0.2,\, 0.1,\, 0.3), \\[0.5em]
    &\text{Covariances:} \quad
    \Sigma_1 = \begin{bmatrix} 5 & 0 & 0 \\ 0 & 1 & \frac{1}{5} \\ 0 & \frac{1}{5} & \frac{1}{5} \end{bmatrix}, \quad
    \Sigma_2 = \begin{bmatrix} 10 & 0 & 0 \\ 0 & 1 & \frac{1}{5} \\ 0 & \frac{1}{5} & \frac{1}{5} \end{bmatrix}, \\
    &
    \hspace{2.7cm}\Sigma_3 = \begin{bmatrix} 5 & 0 & 0 \\ 0 & 1 & \frac{1}{5} \\ 0 & \frac{1}{5} & \frac{1}{5} \end{bmatrix}, \quad
    \Sigma_4 = \begin{bmatrix} 10 & 0 & 0 \\ 0 & 1 & \frac{1}{5} \\ 0 & \frac{1}{5} & \frac{1}{5} \end{bmatrix},
    \Sigma_5 = \begin{bmatrix} 5 &  0 & 0 \\ 0 & 1 & \frac{1}{5} \\ 0 & \frac{1}{5} & \frac{1}{5} \end{bmatrix}.
\end{align*}
See \Cref{fig:multi-gaussian-density} for demonstration.

\begin{figure}
    \centering
    \includegraphics[width=0.8\linewidth]{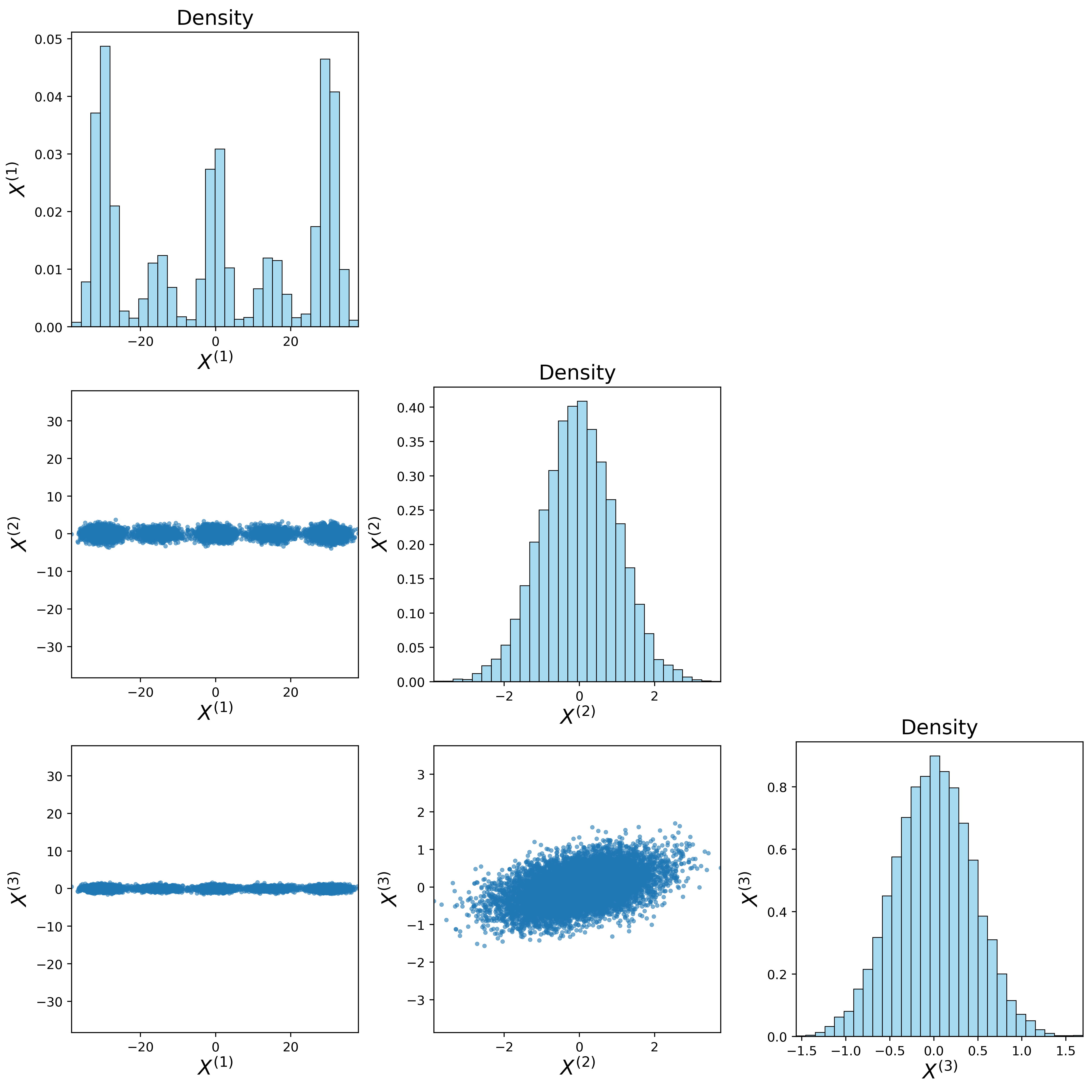}
    \caption{Mixture of Gaussians target distribution (five components in $\mathbb{R}^3$). Marginal densities and pairwise projections show alignment between global and local geometry.}
    \label{fig:multi-gaussian-density}
\end{figure}

Additional results for MSE, bias, and variance across different sampling methods under fixed budget and fixed simulation time are provided in \Cref{fig:app:exp2:1} and \Cref{fig:app:exp2:2}.
\begin{figure}[htbp]
\centering
\begin{minipage}[t]{0.8\textwidth}
    \centering
    \begin{subfigure}[b]{\textwidth}
        \includegraphics[width=\textwidth]{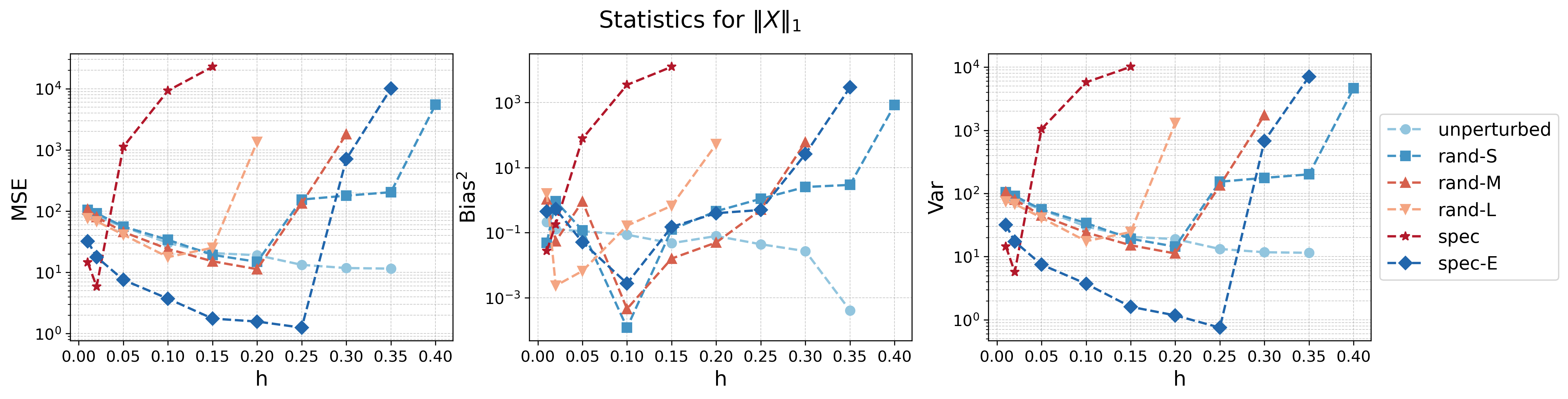}
        \includegraphics[width=\textwidth]{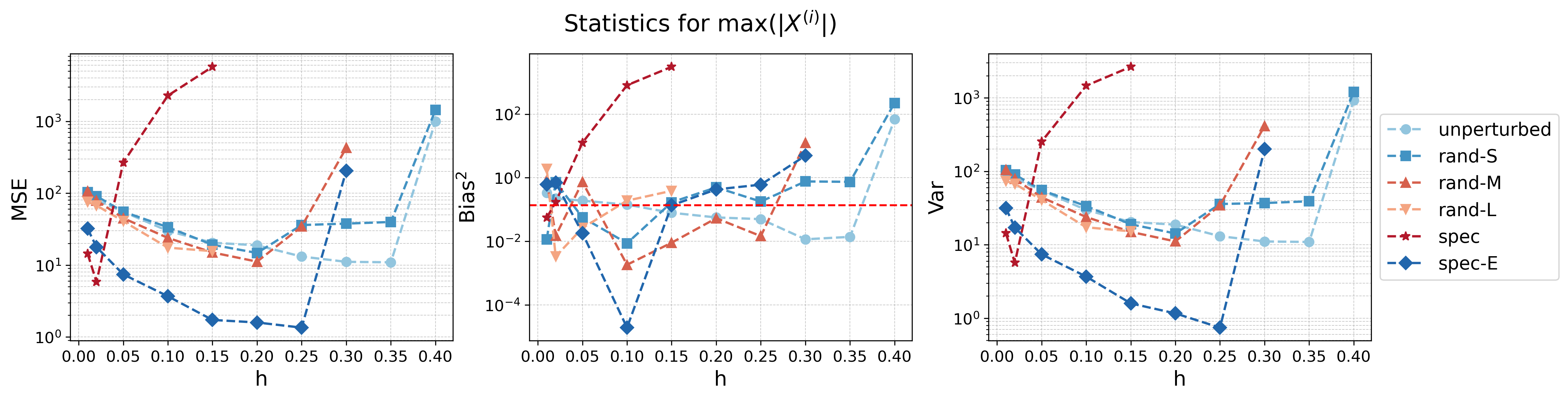}
        \includegraphics[width=\textwidth]{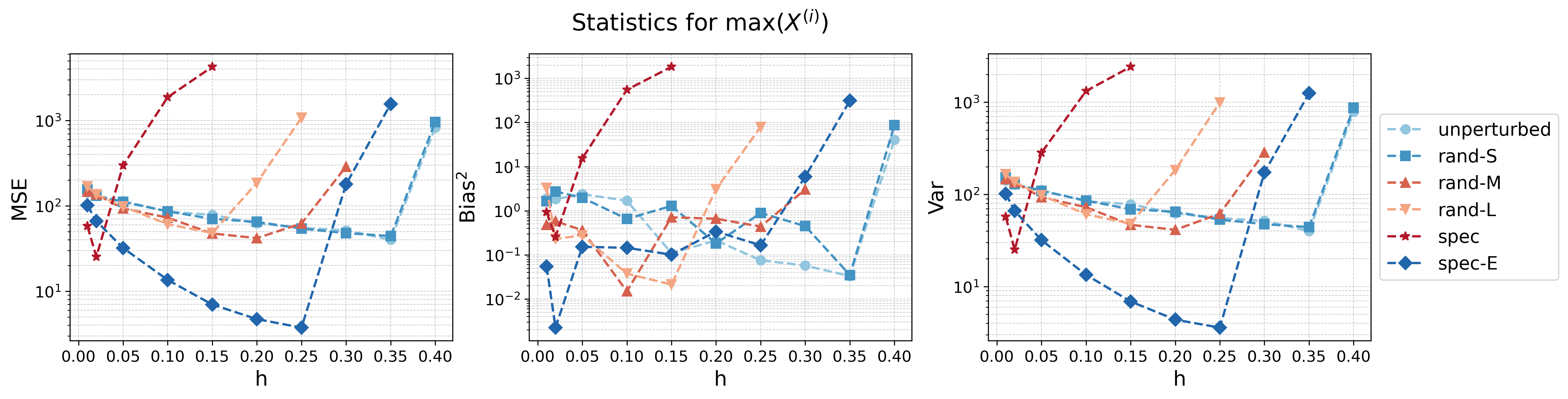}
        \includegraphics[width=\textwidth]{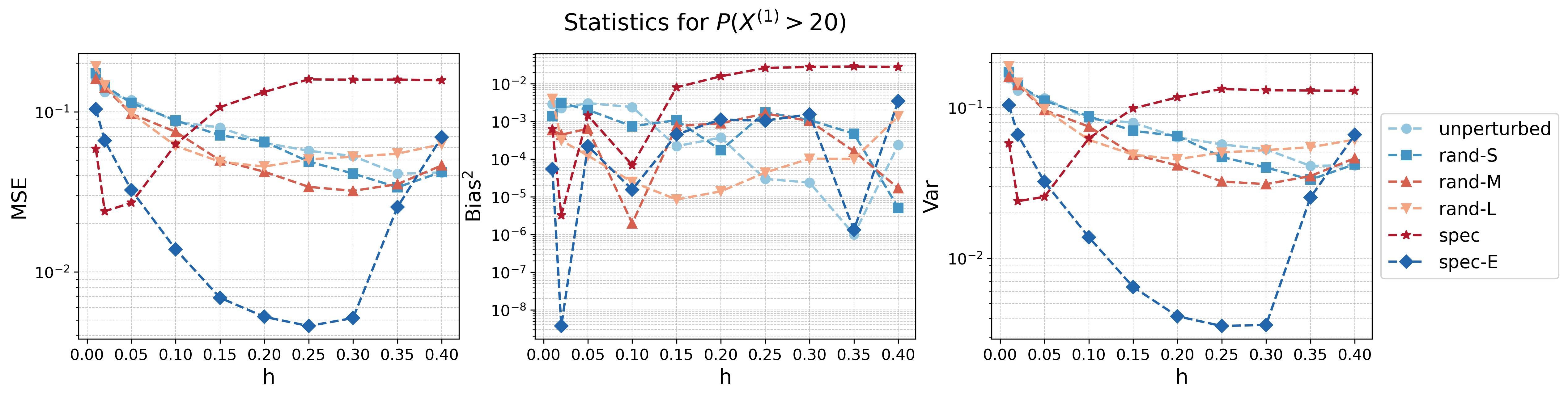}
        \includegraphics[width=\textwidth]{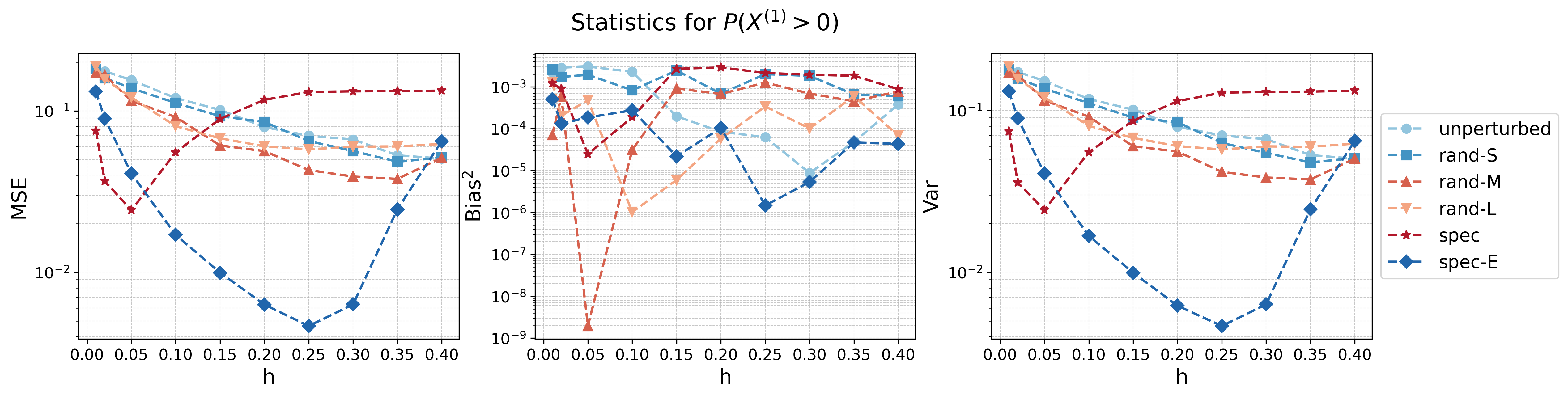}
    \end{subfigure}
\end{minipage}
\caption[Multiple Gaussian, MSE vs. step size]{Additional result for mixture of Gaussian under fixed total steps $K = 100000$. Plots show : MSE vs. step size under varying $h$, $M=128$.
Our method achieves the lowest MSE across a broad range of $h$, whereas competing schemes either converge more slowly or suffer bias blow-ups at certain step sizes. \label{fig:app:exp2:1}}
\end{figure}

\begin{figure}[htbp]
\centering
\begin{minipage}[t]{0.85\textwidth}
    \centering
    \begin{subfigure}[b]{\textwidth}
        \includegraphics[width=\textwidth]{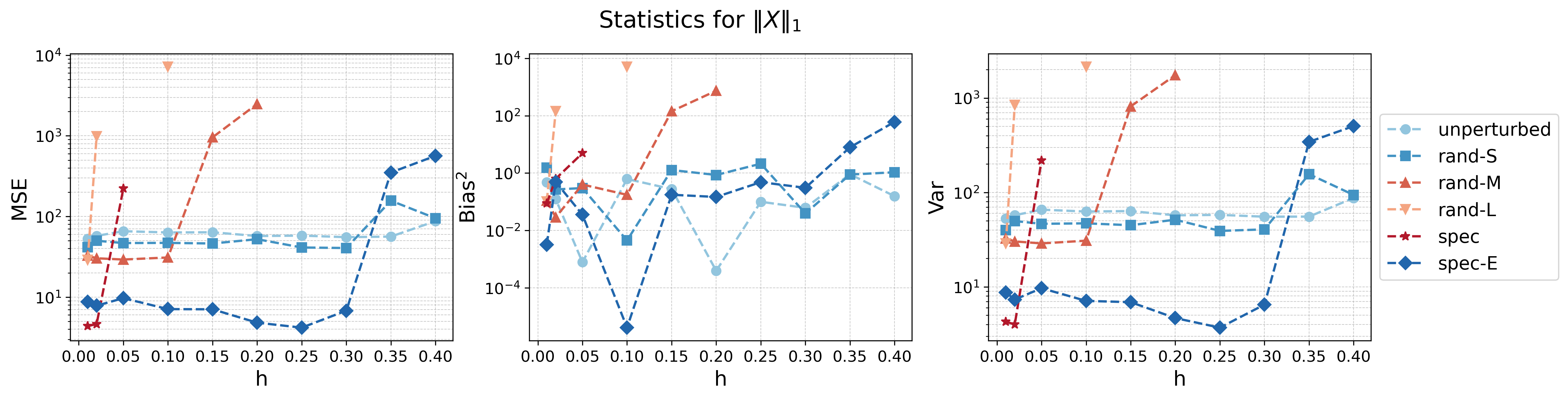}
        \includegraphics[width=\textwidth]{images/multi-gaussian/fix_time_3.png}
        \includegraphics[width=\textwidth]{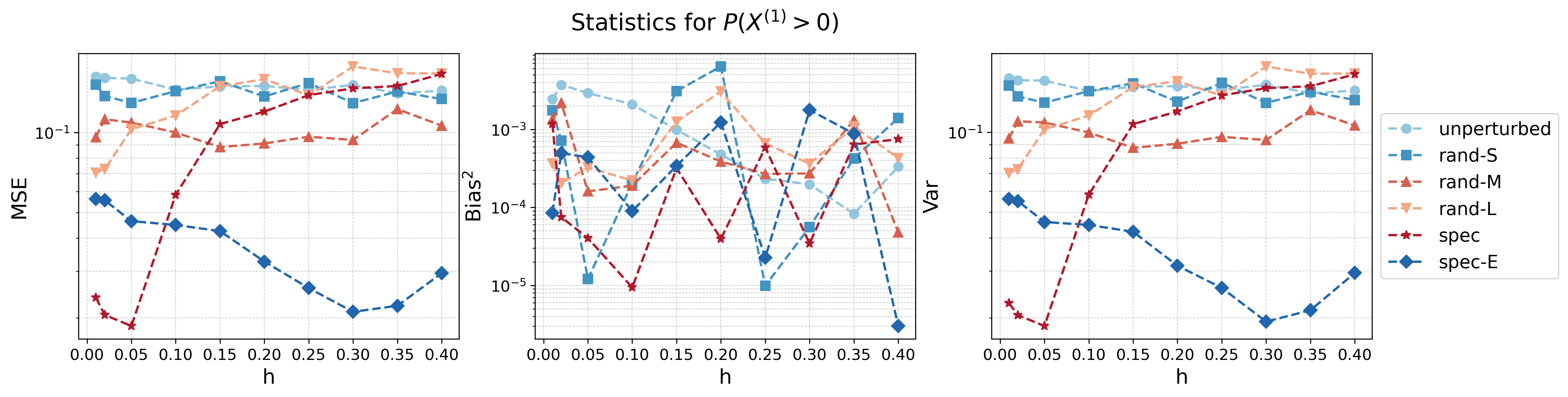}
    \end{subfigure}
\end{minipage}
\caption[Multiple Gaussian, MSE vs. step size]{Additional result for mixture of Gaussian under fixed total time $T = 4000$.
Plots show MSE vs. step size under varying $h$, $M=128$.
Our method achieves the lowest MSE across a broad range of $h$, whereas competing schemes either converge more slowly or suffer bias blow-ups at certain step sizes. \label{fig:app:exp2:2}}
\end{figure}

\subsection{Bayesian logistic regression}\label{app:numerical:blr}
The posterior of Bayesian logistic regression task is illustrated in \Cref{fig:blr_density}, which is uni-modal but non-symmetric.
Additional results for MSE, bias, and variance across different sampling methods under fixed budget and fixed simulation time are provided in \Cref{fig:app:exp3:1} and \Cref{fig:app:exp3:2}.
\begin{figure}[htbp]
\centering
    \includegraphics[width=0.6\textwidth]{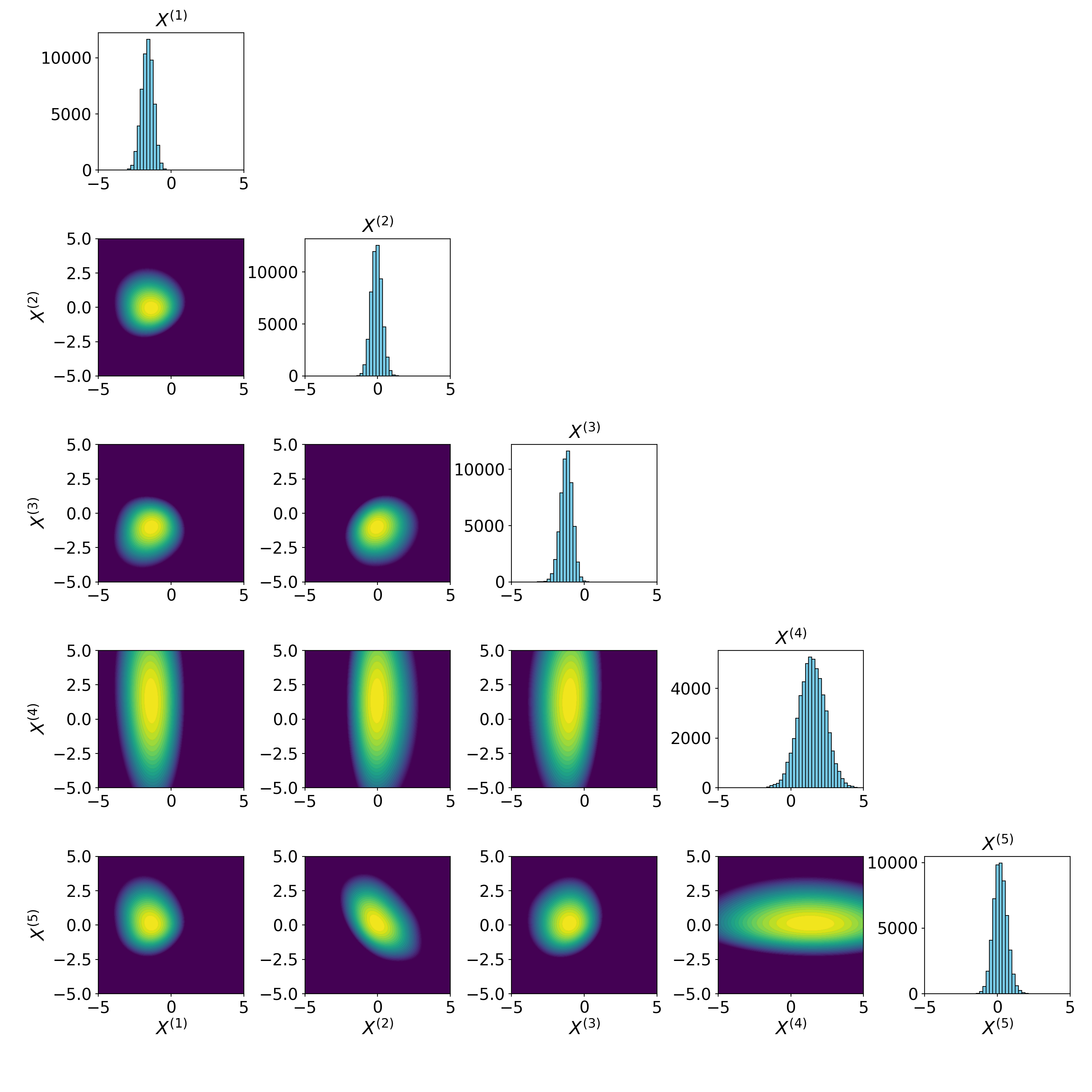}
\caption[Bayesian logistic regression, density]{Bayesian logistic regression. The off-diagonal panels show pairwise 2-D projections of the log-posterior (first 5 dims). The diagonal panel shows the posterior density.\label{fig:blr_density}}
\end{figure}

\begin{figure}[htbp]
\centering
\begin{minipage}[t]{0.8\textwidth}
    \centering
    \begin{subfigure}[b]{\textwidth}
        \includegraphics[width=\textwidth]{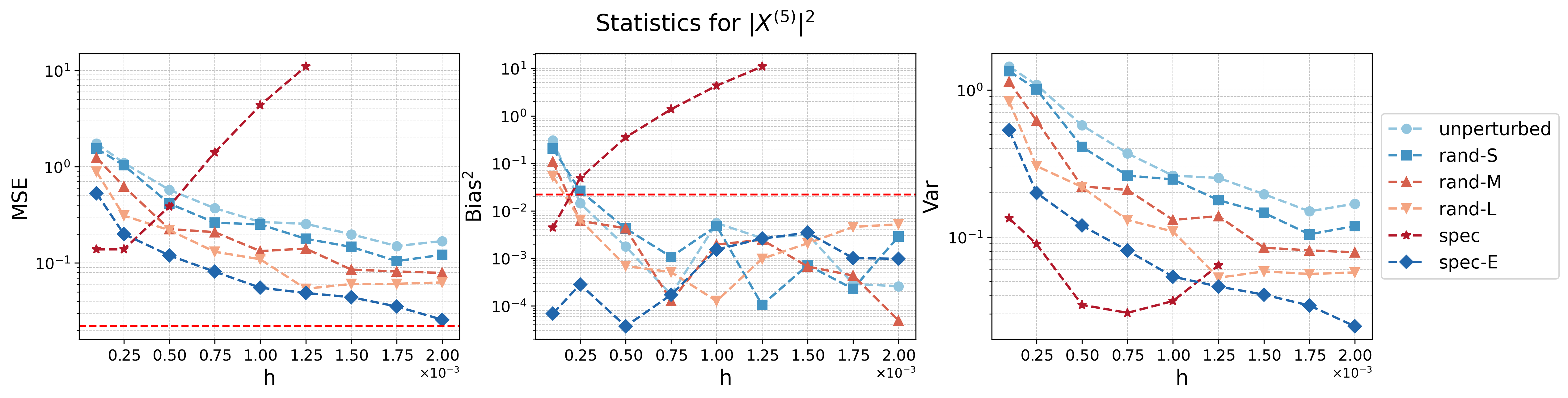}
        \includegraphics[width=\textwidth]{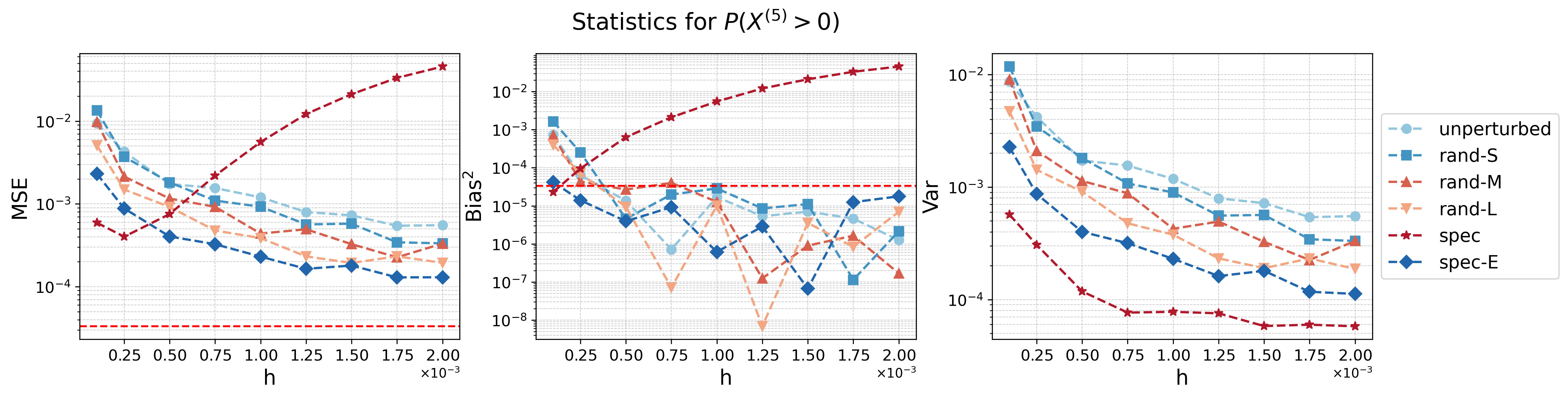}
        \includegraphics[width=\textwidth]{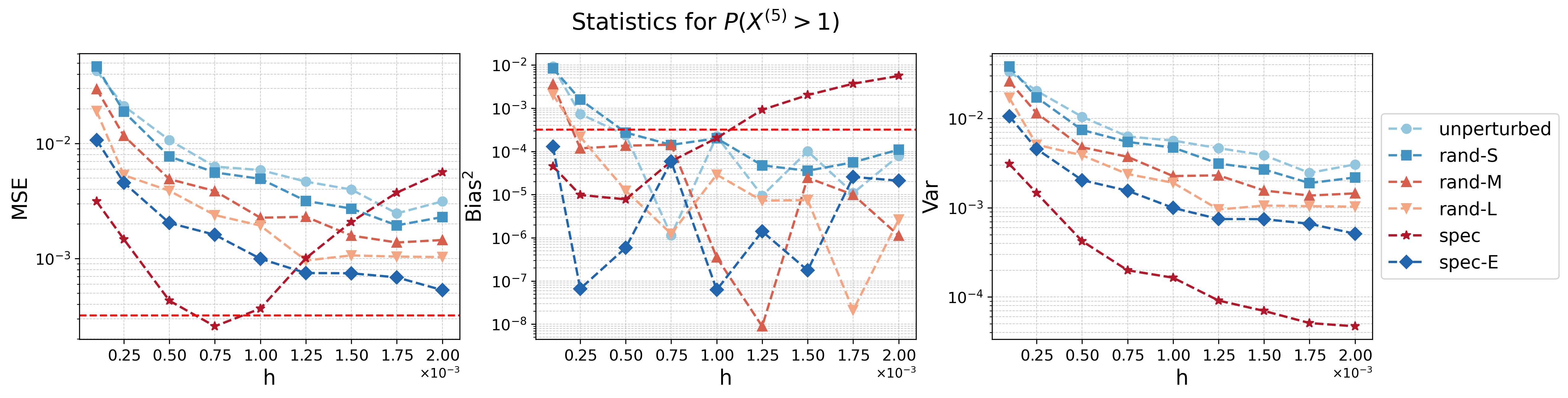}
        \includegraphics[width=\textwidth]{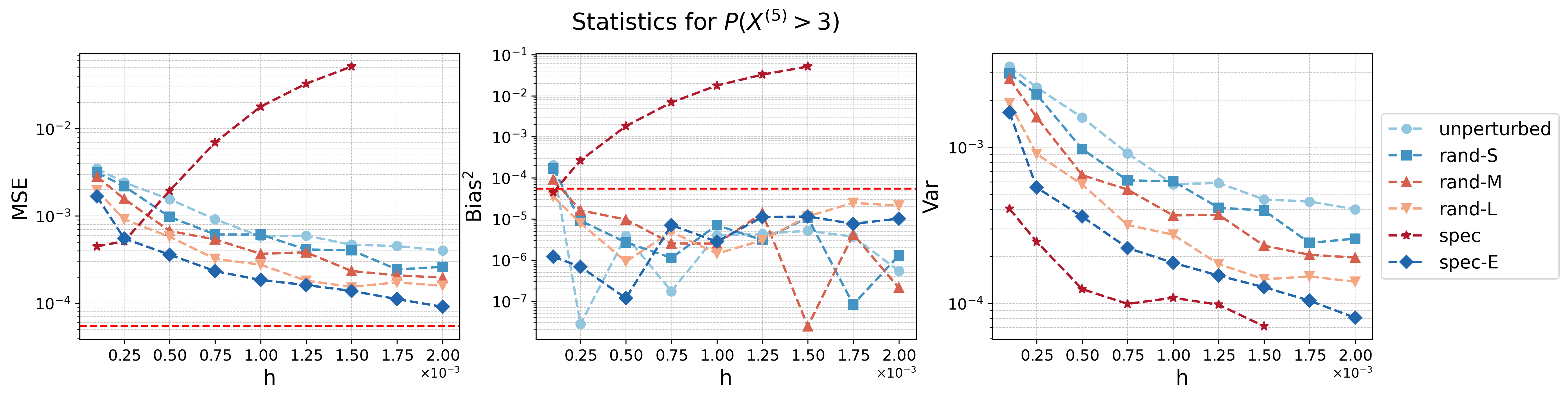}
        \includegraphics[width=\textwidth]{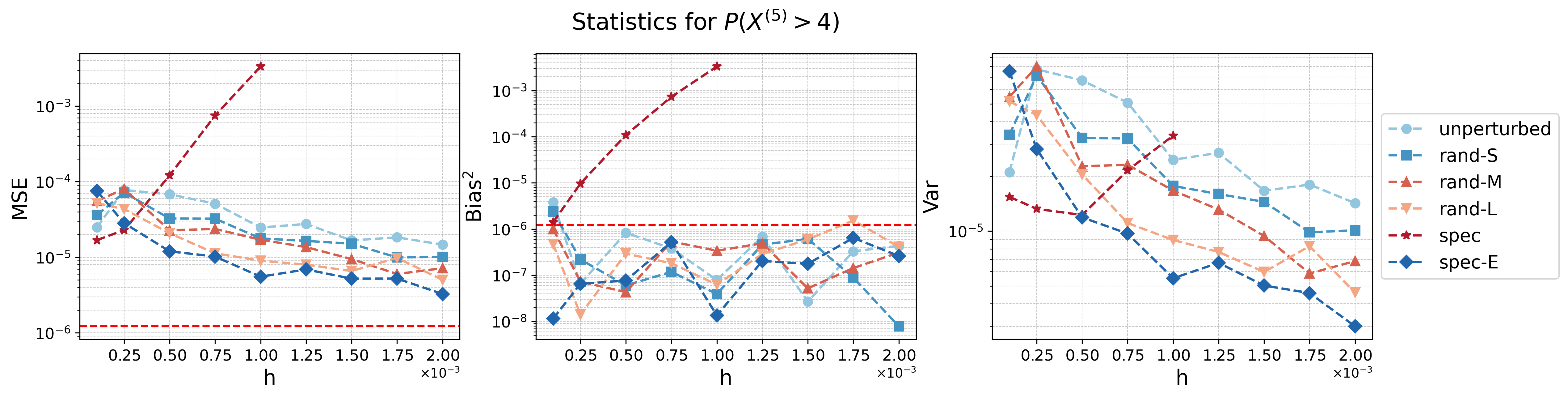}
    \end{subfigure}
\end{minipage}
\caption[Bayesian logistic regression, MSE vs. step size]{Additional results for Bayesian logistic regression: MSE vs. step size. For a panel of summary statistics, we report MSE, bias, and variance across step sizes. Our method attains the smallest (variance-dominated) MSE uniformly over $h$. \texttt{spec} is numerically unstable, while the other baselines are stable but exhibit larger estimation variance. \label{fig:app:exp3:1}}
\end{figure}

\begin{figure}[htbp]
\centering
\begin{minipage}[t]{0.7\textwidth}
    \centering
    \begin{subfigure}[b]{\textwidth}
        \includegraphics[width=\textwidth]{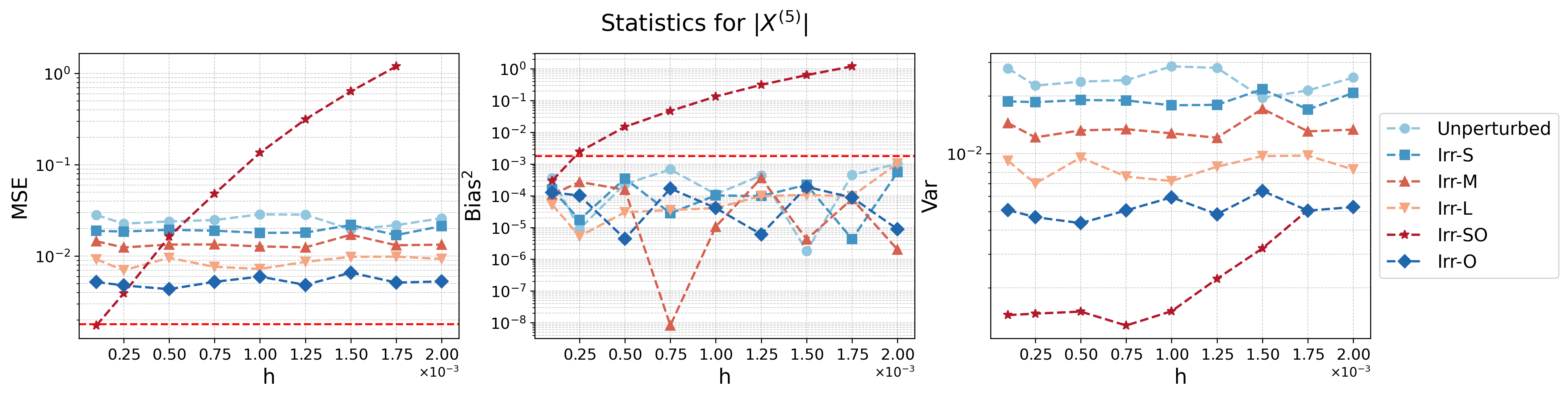}
        \includegraphics[width=\textwidth]{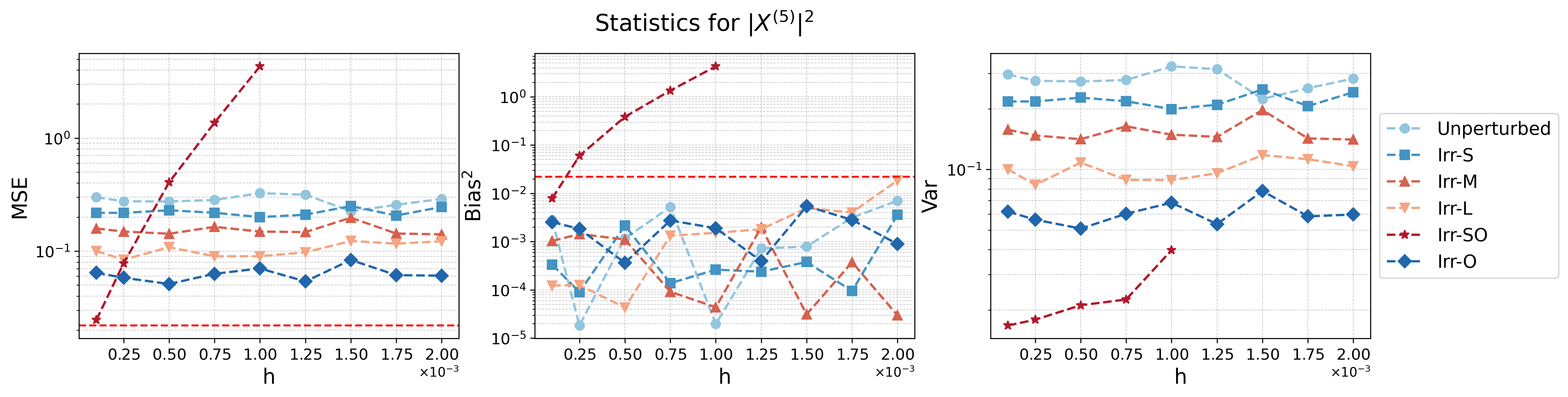}
        \includegraphics[width=\textwidth]{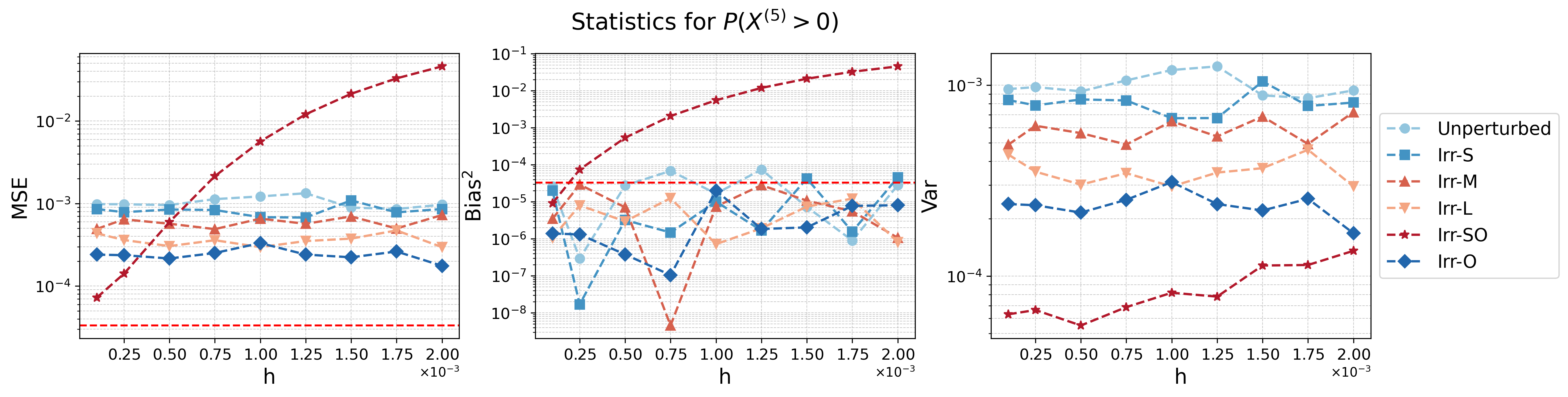}
        \includegraphics[width=\textwidth]{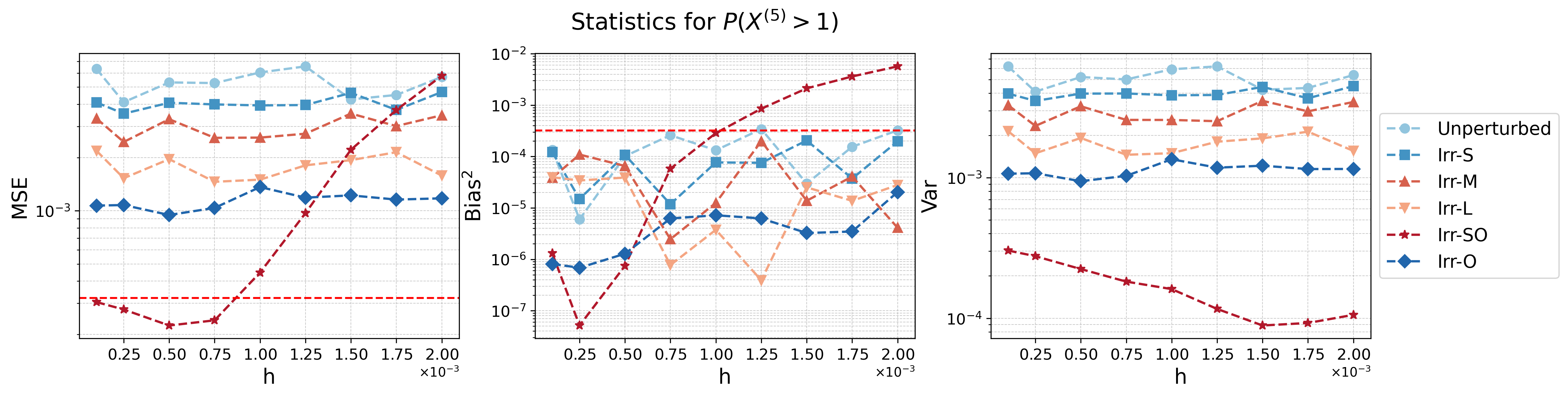}
        \includegraphics[width=\textwidth]{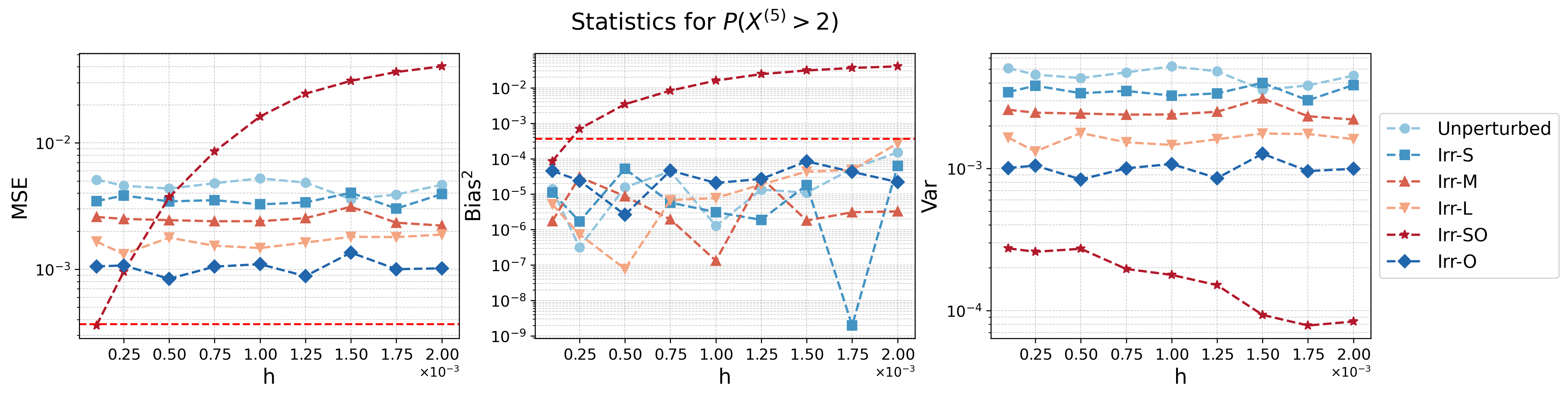}
        \includegraphics[width=\textwidth]{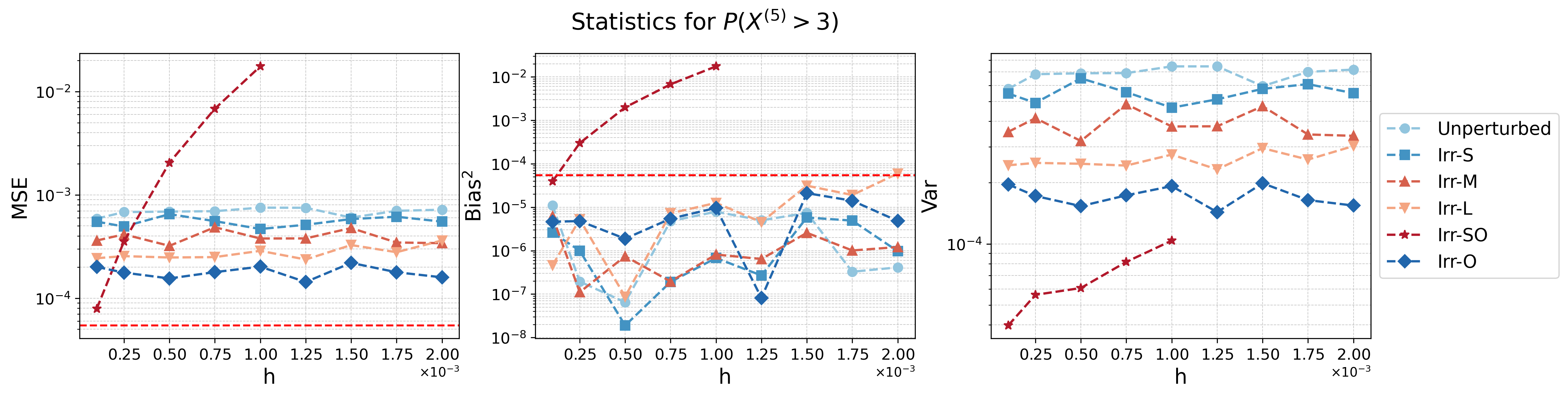}
        \includegraphics[width=\textwidth]{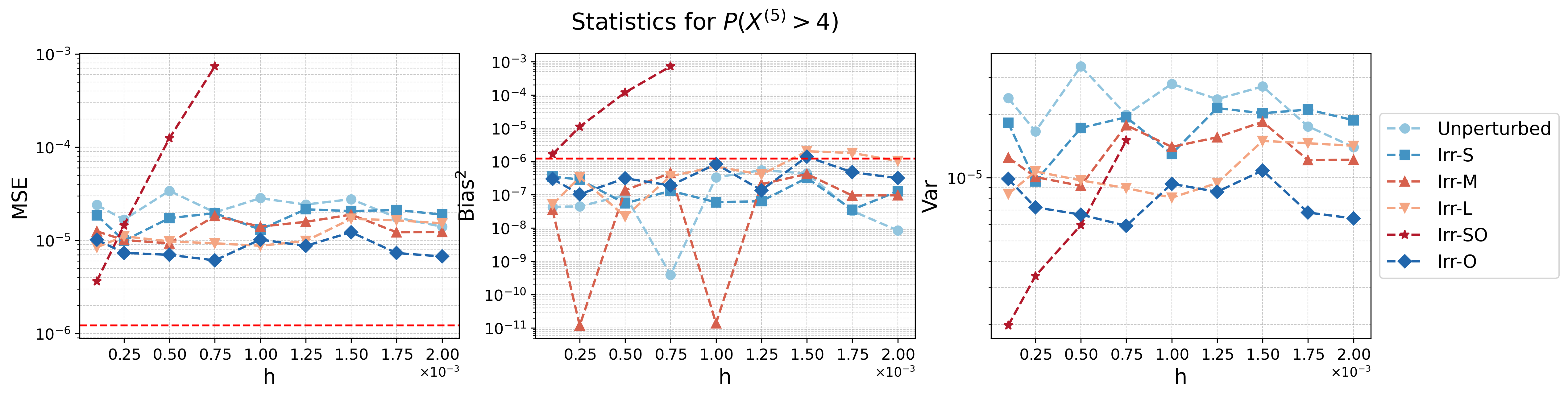}
    \end{subfigure}
\end{minipage}
\caption[Bayesian logistic regression, MSE vs. step size]{Bayesian logistic regression — MSE vs. step size. For a panel of summary statistics, we report MSE, bias, and variance across step sizes. Our method attains the smallest (variance-dominated) MSE uniformly over $h$. \texttt{spec} is numerically unstable, while the other baselines are stable but exhibit larger estimation variance.\label{fig:app:exp3:2}}
\end{figure}

\subsection{Independent component analysis}\label{app:numerical:ica}
The estimated Fisher information matrix $\hat{\mathcal{F}} \in \mathbb{R}^{9\times 9}$ (with $D = d^2 = 9$) used for constructing the spectral perturbation is
\begin{equation*}
\hat{\mathcal{F}} =
\begin{pmatrix}
1376.6 &  821.98 &  601.77 &   18.56 &   26.61 &    1.08 &    0.57 &   -5.64 &    7.88 \\
 821.98 & 1048.80 &  387.15 &   -0.57 &    5.98 &  -14.91 &   -7.88 &   -8.41 &  -16.20 \\
 601.77 &  387.15 &  544.71 &   12.76 &   17.55 &    3.13 &   24.72 &   17.63 &   16.26 \\
  18.56 &   -0.57 &   12.76 & 1379.31 &  817.66 &  596.03 &   20.69 &   26.94 &  -43.39 \\
  26.61 &    5.98 &   17.55 &  817.66 & 1036.79 &  380.94 &    8.62 &   13.46 &  -28.55 \\
   1.08 &  -14.91 &    3.13 &  596.03 &  380.94 &  539.21 &   34.54 &   32.86 &   -8.04 \\
   0.57 &   -7.88 &   24.72 &   20.69 &    8.62 &   34.54 & 1397.90 &  819.09 &  602.28 \\
  -5.64 &   -8.41 &   17.63 &   26.94 &   13.46 &   32.86 &  819.09 & 1030.88 &  380.12 \\
   7.88 &  -16.20 &   16.26 &  -43.39 &  -28.55 &   -8.04 &  602.28 &  380.12 &  536.94
\end{pmatrix}.
\end{equation*}

\section{Irreversible versus reversible perturbations}\label{app:robustness}

A natural question is whether the Fisher information matrix should instead be used to construct a reversible preconditioner.
The answer depends on how well the global Fisher information matrix \(\mathcal F\) represents the local geometry of the target.
For the Gaussian and mixture-of-Gaussian targets in \Cref{sec:Gaussian,sec:mix:of:Gaussian}, local and global curvature coincide, or nearly so.
In these settings, reversible preconditioning with \(\mathcal F^{-1}\) increases the spectral gap while introducing relatively little discretization error, and is therefore expected to perform well.

The BLR posterior is different: its curvature varies across the state space, so the local Hessian \(-\nabla^2\log\pi(x)\) can differ substantially from \(\mathcal F\).
A constant reversible preconditioner \(\mathcal F^{-1}\) modifies both the drift and the diffusion, so this mismatch can introduce persistent discretization bias.
An irreversible perturbation, by contrast, uses \(\mathcal F\) only to construct a skew-symmetric drift rotation and leaves the diffusion unchanged, making it more robust to global--local geometry mismatch.
The experiments below illustrate this distinction empirically on the BLR target: even with the true Fisher information matrix, our irreversible perturbation outperforms reversible preconditioning, and the gap widens in the adaptive setting, where inaccurate Fisher information matrix estimates can distort the reversible noise covariance.

All experiments use Euler--Maruyama discretization with step size \(\Delta t=2\times 10^{-3}\), \(K=20{,}000\) steps, \(M=32\) chains, and stochastic gradients with mini-batch size \(n=10\). We compare standard ULA with oracle and adaptive versions of the optimal position-independent reversible perturbations proposed by \citet{Ti2023} and with oracle and adaptive versions of our optimal position-independent irreversible perturbations \texttt{spec-E}. For adaptive methods, \(\mathcal F\) is estimated online using the streaming estimator from \Cref{sec:adaptive_FIM}, and the preconditioner or perturbation is updated every 100 steps.
For a paired comparison, all methods use the same Gaussian noise and mini-batch indices within each chain.
We report the MSE of running estimates of \(\mathbb E[\mathrm{sum}(X^{(i)})]\) and \(\mathbb E_\pi[\|X\|^2]\).

We emphasize that this comparison is empirical and target-dependent.
A more complete theoretical characterization of when irreversible perturbations are preferable to reversible preconditioning is beyond the scope of this paper and remains an interesting direction for future work.

\paragraph{Robustness to initialization}\label{app:robustness-init}
\Cref{fig:robustness-init-blr} compares standard ULA, oracle/adaptive reversible preconditioning, and oracle/adaptive irreversible perturbations, initialized at \(X_0=s\mathbf 1_d\) with \(s\in\{0,1,5,10\}\).
Adaptive reversible preconditioning stalls across initialization distances, while adaptive irreversible perturbation converges reliably and tracks its oracle version.
Even with the true Fisher information matrix, the irreversible perturbation outperforms reversible preconditioning, indicating that \(\mathcal F^{-1}\) is a poor constant reversible preconditioner for this spatially varying posterior.

\begin{figure}[ht]
    \centering
    \includegraphics[width=\textwidth]{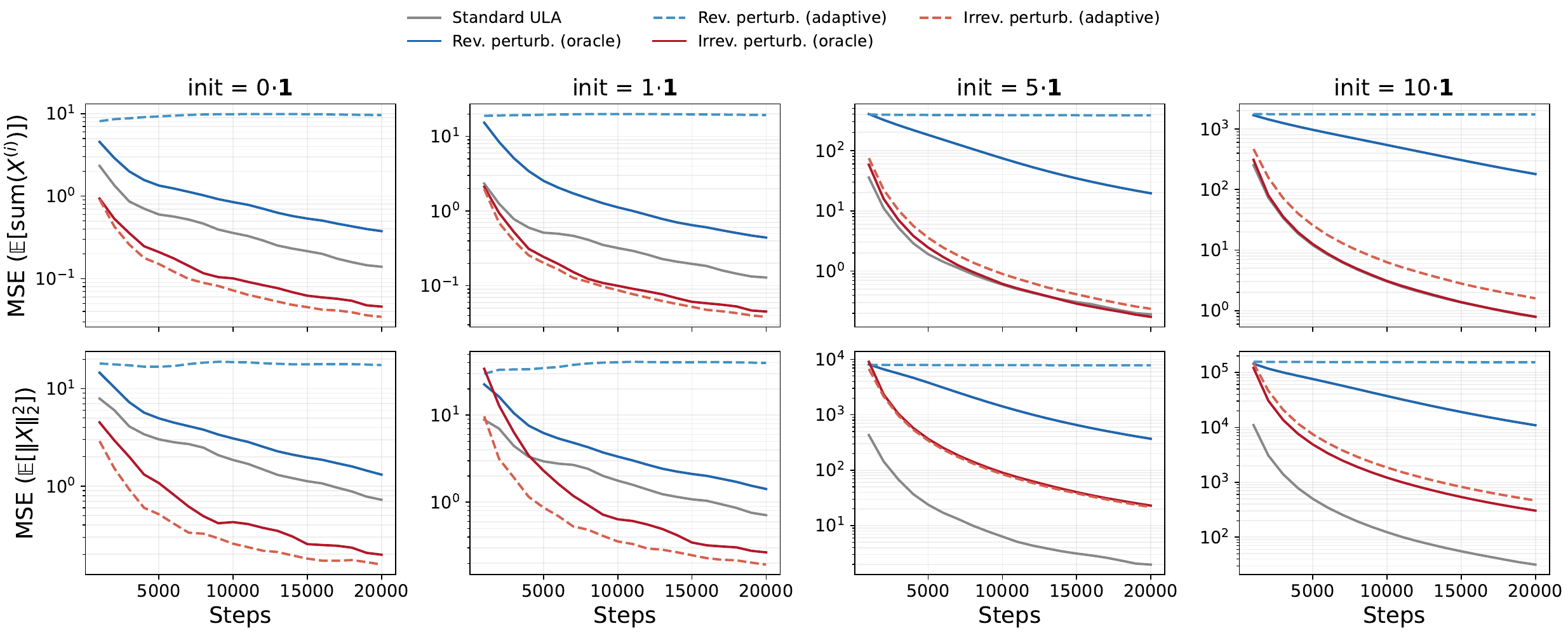}
    \caption{
        \textbf{Robustness to initialization (BLR, $d=20$).}
        MSE of $\mathbb{E}[\mathrm{sum}(X^{(i)})]$ (top) and $\mathbb{E}[\|X\|_2^2]$ (bottom) for five methods starting from $X_0 = s\cdot\mathbf{1}$ with $s \in \{0, 1, 5, 10\}$, statistics computed using $M=128$ chains.
        Adaptive reversible perturbation (dashed blue) completely stalls, while adaptive irreversible perturbation (dashed red) converges reliably.
    }
    \label{fig:robustness-init-blr}
\end{figure}

\paragraph{Trajectory behavior}\label{app:robustness-trajectory}

\Cref{fig:traj-oracle,fig:traj-adaptive} visualize representative trajectories projected onto the top two principal components of the posterior.
The oracle irreversible perturbation produces a spiraling trajectory that explores posterior level sets more broadly than reversible preconditioning.
In the adaptive case, reversible preconditioning collapses into a trapped cluster for distant initializations, whereas the adaptive irreversibly perturbed chain continues to explore and moves toward the posterior mean.

\begin{figure}[ht]
    \centering
    \includegraphics[width=\textwidth]{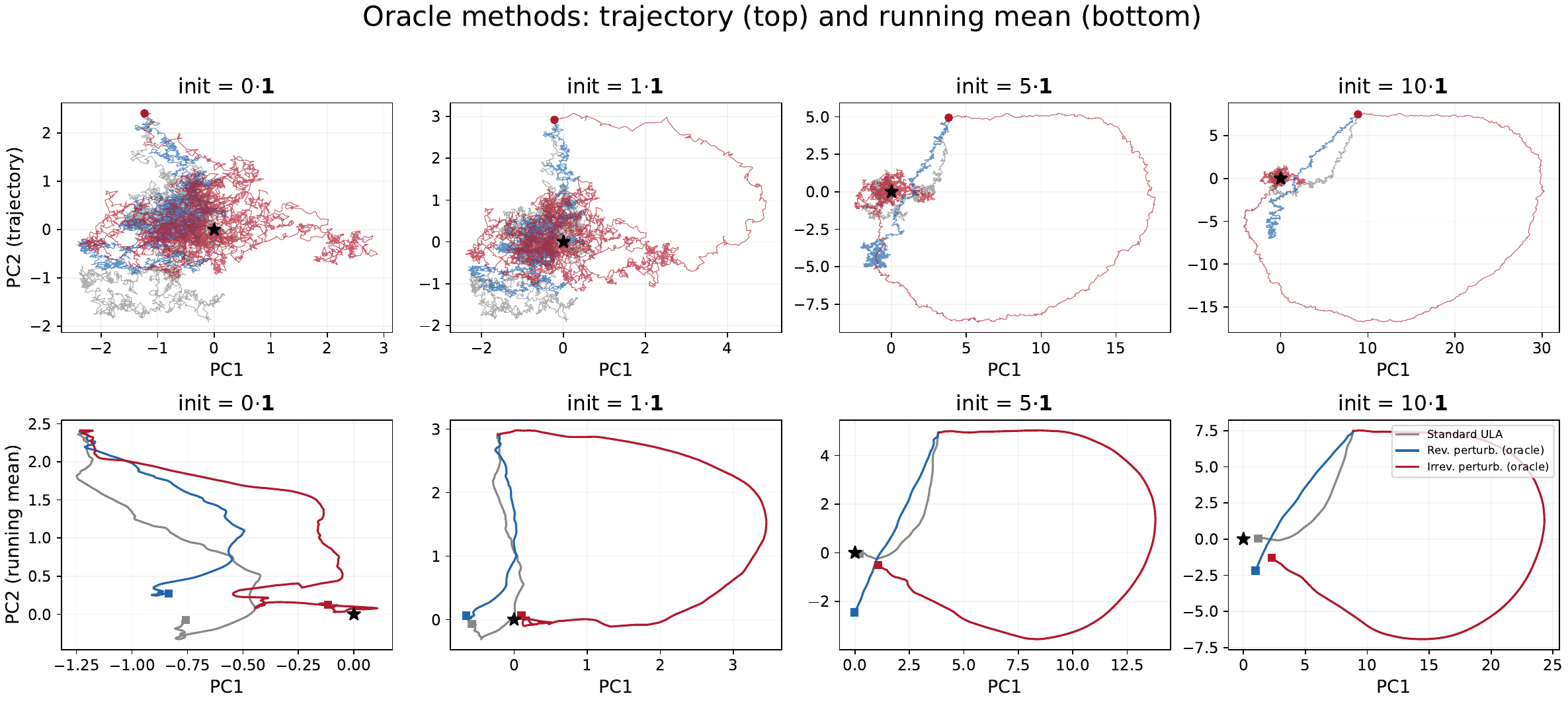}
    \caption{
        \textbf{Oracle trajectories (BLR, $d=20$, single chain).}
        Top: raw trajectories projected onto PC1--PC2. Bottom: running cumulative mean (square = final position, star = posterior mean).
        The irreversible chain (red) spirals toward the mode; the reversible chain (blue) takes a more direct but slower path.
    }
    \label{fig:traj-oracle}
\end{figure}

\begin{figure}[ht]
    \centering
    \includegraphics[width=\textwidth]{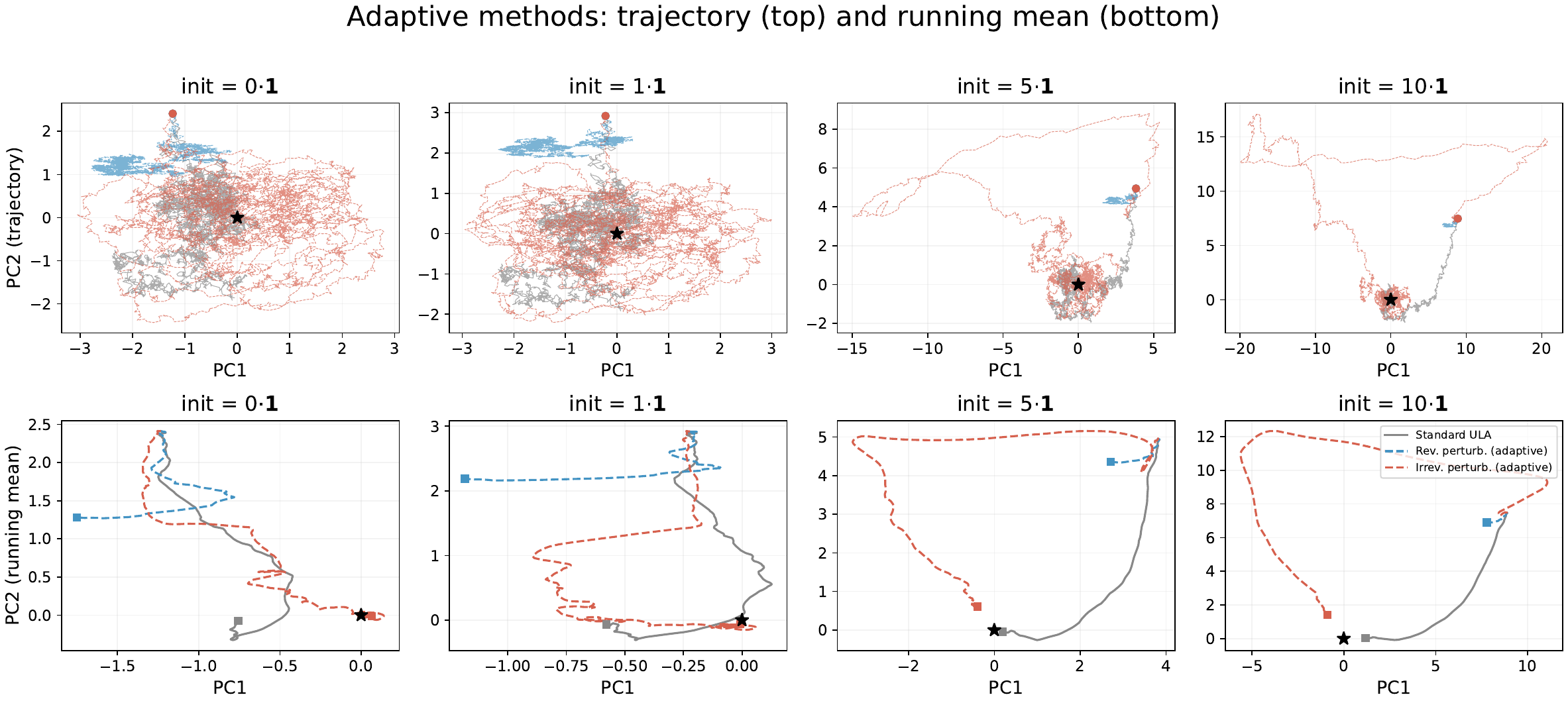}
    \caption{
        \textbf{Adaptive trajectories (BLR, $d=20$, single chain).}
        The adaptive reversible chain (blue) collapses into a trapped cluster at $s \geq 5$, while the adaptive irreversible chain (red) explores widely and converges.
    }
    \label{fig:traj-adaptive}
\end{figure}

\paragraph{Robustness to inaccurate Fisher estimates}\label{app:robustness-fim}

To isolate the effect of FIM quality, we corrupt the Fisher information matrix as
\[
    \mathcal{F}_\alpha = \alpha \, \mathcal{F}_{\mathrm{true}} + (1 - \alpha) \, I_d, \qquad \alpha \in \{0, 0.25, 0.5, 0.75, 1\},
\]
and initialize all chains at $X_0 = \mathbf{0}$.
\Cref{fig:robustness-fim-curves-blr,fig:robustness-fim-summary-blr} show that irreversible perturbations consistently outperform reversible preconditioning, both with oracle and corrupted Fisher matrices.
The adaptive irreversible perturbation achieves the lowest MSE across corruption levels, while adaptive reversible preconditioning again stalls.

\begin{figure}[ht]
    \centering
    \includegraphics[width=\textwidth]{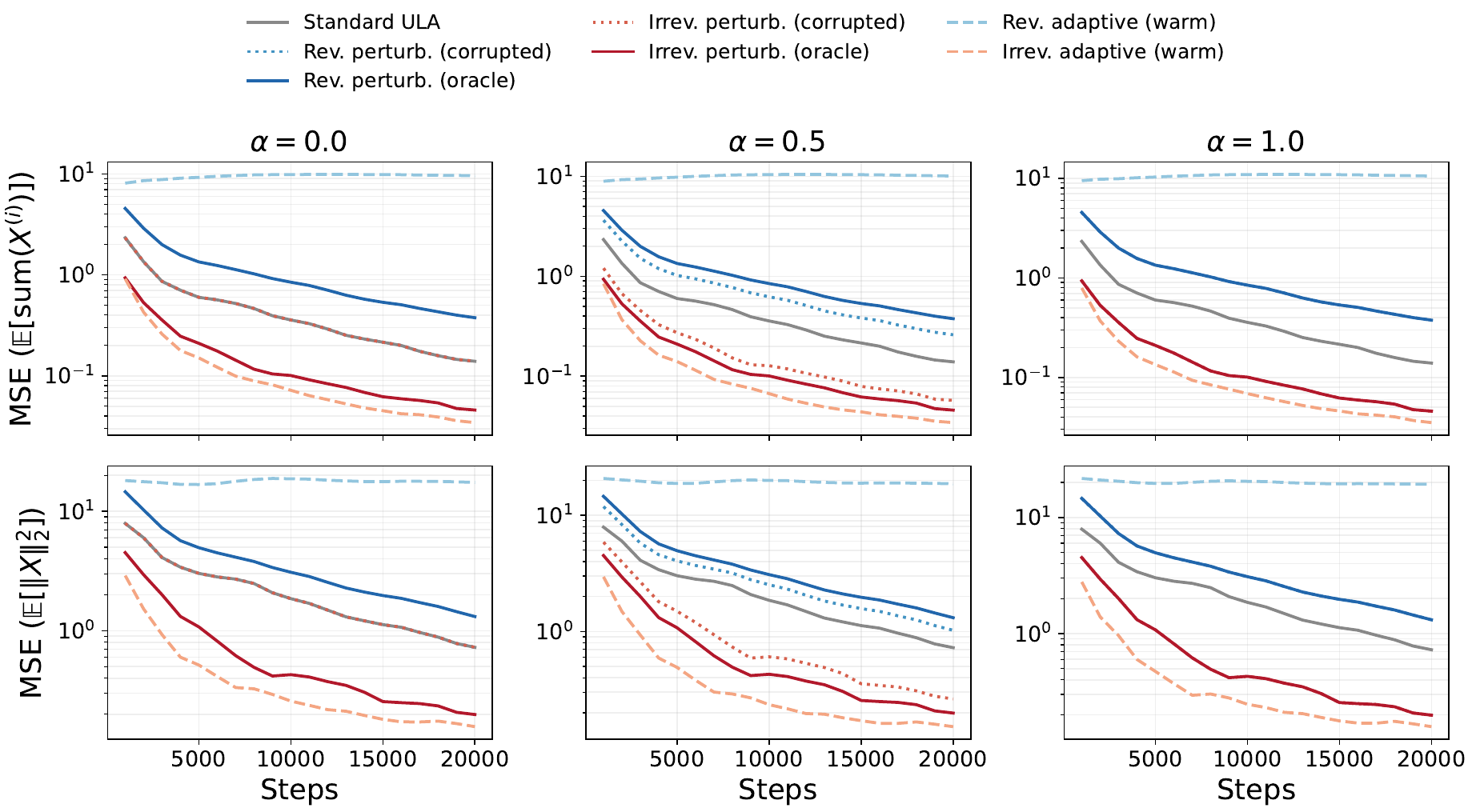}
    \caption{
        \textbf{Robustness to corrupted FIM (BLR, $d=20$): MSE decay curves.}
        Columns correspond to $\alpha \in \{0, 0.5, 1\}$.
        Adaptive irreversible (dashed orange) is the best method at all corruption levels, while adaptive reversible (dashed blue) stalls.
    }
    \label{fig:robustness-fim-curves-blr}
\end{figure}

\begin{figure}[ht]
    \centering
    \includegraphics[width=0.85\textwidth]{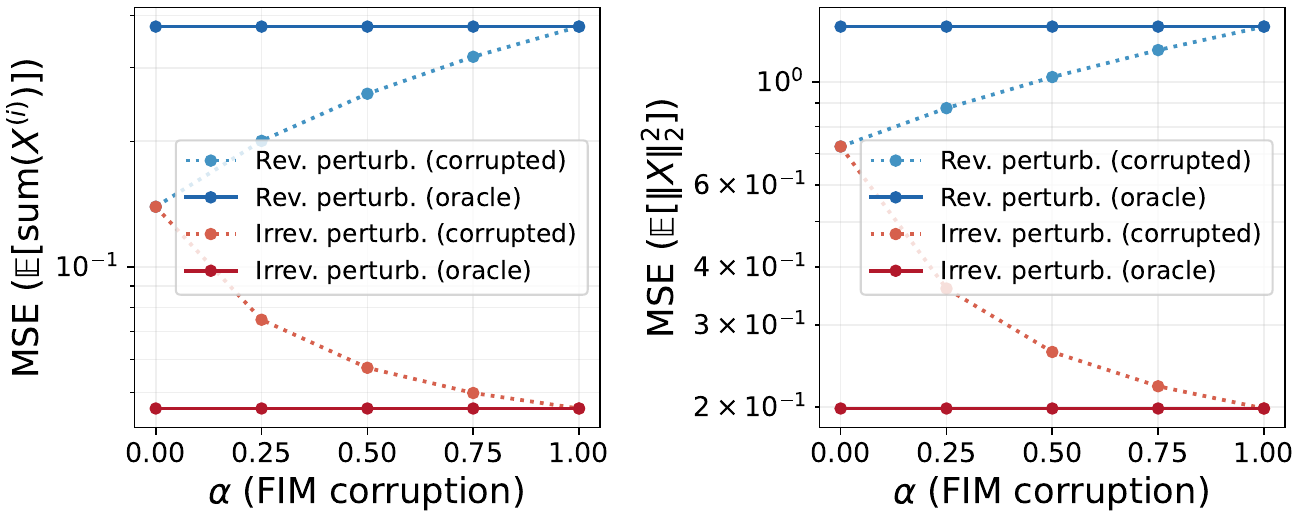}
    \caption{
        \textbf{Final MSE vs.\ FIM corruption level (BLR, $d=20$).}
        Irreversible perturbation (red) consistently outperforms reversible perturbation (blue), both with oracle and corrupted FIM.
        The gap widens as FIM quality worsens ($\alpha \to 0$).
    }
    \label{fig:robustness-fim-summary-blr}
\end{figure}

\paragraph{Analysis of adaptive failure modes}\label{app:robustness-analysis}

Failure of the adaptive reversible method (and success of the adaptive \textit{irreversible} method) can be ascribed to how
the two methods use an inaccurate Fisher information matrix estimate.
Reversible preconditioning inverts \(\hat{\mathcal F}\) and uses \(\hat{\mathcal F}^{-1}\) for both drift and noise; when \(\hat{\mathcal F}\) is ill-conditioned, this suppresses exploration in important directions and prevents recovery.
An irreversible perturbation based on \(\hat{\mathcal F}\) may also produce a poorly calibrated rotational term in the drift, but the diffusion remains isotropic, so the chain continues to explore and the Fisher estimate can improve.
This drift--diffusion decoupling can explain the greater robustness of irreversible perturbations in adaptive schemes.

\bibliography{irr_references}

\end{document}